\newtheorem{thm}{Theorem}[section]
\newtheorem{conj}[thm]{Conjecture}
\newtheorem{cor}[thm]{Corollary}
\newtheorem{lem}[thm]{Lemma}
\theoremstyle{definition}
\newtheorem{defi}[thm]{Definition}
\newtheorem{example}[thm]{Example}
\newcommand{\C}{\mathbb{C}}
\newcommand{\Z}{\mathbb{Z}}
\newcommand{\wheelpoly}[1]{W_{#1}[\mathbf{z}]}
\newcommand{\asm}{\textnormal{ASM}}
\newcommand{\cat}{\operatorname{Cat}}
\newcommand{\prob}{\mathbb{P}}
\newcommand{\lpbracket}[1]{\wideparen{\ #1\ }}
\newcommand{\CT}{\textnormal{CT}}
\newcommand{\lpgraph}{\textnormal{LP}}
\newcommand{\lpedges}{V(\lpgraph)}
\newcommand{\matched}[1]{\stackrel{#1}{\longleftrightarrow}}
\newcommand{\notmatched}[1]{
\textrm{\raisebox{4pt}{$\begin{array}{c}\scriptstyle #1 \\[-5pt] \longleftrightarrow \hspace{-16pt} \scalebox{0.8}{/} \hspace{10pt} \end{array}$}}
}
\newcommand{\notmatchedvariant}[1]{
\textrm{\raisebox{4pt}{$\hspace{-3pt}\begin{array}{c}\scriptstyle #1 \\[-7pt] \longleftrightarrow \hspace{-12pt} \scalebox{0.8}{/} \hspace{4pt} \end{array}$}}
}
\newcommand{\nonc}{\textnormal{NC}_\Z}
\newcommand{\noncn}[1]{\textnormal{NC}_{#1}}
\newcommand{\pistar}{\Pi_*}
\newcommand{\pistarn}{\Pi_*^{(n)}}
\newcommand{\halflat}{\Z^2_{\textrm{NE}}}
\newcommand{\fpl}{\textrm{FPL}}
\newcommand{\anticluster}{\textnormal{AC}}
\newcommand{\submatching}{\mathrel{\lhd}}
\title{Connectivity patterns in loop percolation I: the rationality phenomenon and constant term identities}
\author{Dan Romik}
\begin{document}
\maketitle

\begin{abstract}
Loop percolation, also known as the dense $O(1)$ loop model, is a variant of critical bond percolation in the square lattice $\mathbb{Z}^2$ whose graph structure consists of a disjoint union of cycles. We study its connectivity pattern, which is a random noncrossing matching associated with a loop percolation configuration. These connectivity patterns exhibit a striking rationality property whereby probabilities of naturally-occurring events are dyadic rational numbers or rational functions of a size parameter $n$, but the reasons for this are not completely understood. We prove the rationality phenomenon in a few cases and prove an explicit formula expressing the probabilities in the ``cylindrical geometry'' as coefficients in certain multivariate polynomials. This reduces the rationality problem in the general case to that of proving a family of conjectural constant term identities generalizing an identity due to Di~Francesco and Zinn-Justin. Our results make use of, and extend, algebraic techniques related to the quantum Knizhnik-Zamolodchikov equation.
\end{abstract}

\bigskip

\vfill
\noindent
{\footnotesize Key words: loop percolation, $O(1)$ loop model, XXZ spin chain, noncrossing matching, connectivity pattern, constant term identity, quantum Knizhnik-Zamolodchikov equation, wheel polynomials}

\smallskip \noindent
{\footnotesize 2010 Mathematics Subject Classification: 60K35, 82B20, 82B23.}

\section{Introduction}

\subsection{Critical bond percolation and loop percolation}

Critical bond percolation on $\Z^2$ is the most natural model of a random subgraph of the square lattice:  each edge of the lattice is included with probability $1/2$, independently of all other edges. Despite the simplicity of its definition, it is difficult to analyze rigorously. Though the spectacular recent results of Smirnov \cite{smirnov} and Lawler-Schramm-Werner \cite{lawler-etal} related to critical exponents, conformal invariance and the Schramm-Loewner Evolution (SLE) have resolved many of the outstanding open problems for the related model of critical site percolation on the triangular lattice, the same problems are still unsolved in the case of bond percolation on $\Z^2$. 

In this paper we will study another percolation model, which is related to critical bond percolation on $\Z^2$ and can be thought of as a variant of it. We will refer to the model as \textbf{loop percolation on $\Z^2$}; an equivalent model has been studied in the statistical physics literature under different names such as the \textbf{dense $O(1)$ loop model} \cite{mitra-etal} or 
\textbf{completely packed loops} \cite{zinn-justin}. Another closely related model is the special case of the \textbf{XXZ spin chain} in which the so-called anisotropy parameter $\Delta$ takes the value $-1/2$; see \cite{batchelor-etal, difran-zj-zuber}.

Our motivation in studying this model was twofold: first, to point out, and attempt to systematically exploit, the inherent interest and approachability of the model from the point of view of probability theory---qualities which have not been emphasized in previous studies. Second, to use and enhance some of the deep algebraic tools (referred to under the broad heading of the \textbf{quantum Knizhnik-Zamolodchikov equation}) that were developed in recent years to compute explicit formulas for several quantities of interest in the model. 
In this paper we focus on algebraic techniques. The follow-up paper \cite{romik-pipes} will contain additional results of a more probabilistic flavor.

Let us start with the definition of the model. We define loop percolation in the entire plane, but later we will also consider the model on a half-plane and on a semi-infinite cylinder.

\begin{defi}[Loop percolation]
Consider the square lattice $\Z^2$ equipped with a checkerboard coloring of the squares of the dual lattice, such that the square whose bottom-left corner is $(0,0)$ is white. \textbf{Loop percolation} is the random subgraph $\lpgraph=(\Z^2,\lpedges)$ of $\Z^2$ obtained by tossing a fair coin for each white square \raisebox{-2.8pt}{\scalebox{2.0}{$\square$}} in the dual lattice, independently of all other coins, and including in $\lpedges$ either the bottom and top edges of \raisebox{-2.8pt}{\scalebox{2.0}{$\square$}} or the left and right edges of \raisebox{-2.8pt}{\scalebox{2.0}{$\square$}} according to the result of the coin toss (Fig.~\ref{fig:lp-two-edge-confs}).
\end{defi}

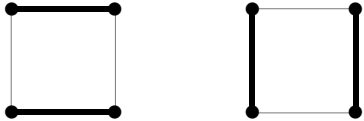
\begin{figure}[h]
\begin{center}
\setlength{\unitlength}{0.18in}
\begin{picture}(10,3)(0,0)
\color{gray}
\put(0,0){\framebox(3,3){}}
\put(7,0){\framebox(3,3){}}
\color{black}
\put(0,0){\circle*{0.35}}
\put(3,0){\circle*{0.35}}
\put(0,3){\circle*{0.35}}
\put(3,3){\circle*{0.35}}
\put(7,0){\circle*{0.35}}
\put(10,0){\circle*{0.35}}
\put(7,3){\circle*{0.35}}
\put(10,3){\circle*{0.35}}
\linethickness{2pt}
\put(0,0){\line(1,0){3}}
\put(0,3){\line(1,0){3}}
\put(7,0){\line(0,1){3}}
\put(10,0){\line(0,1){3}}
\end{picture}
\caption{The two possible edge configurations around a white lattice square.}
\label{fig:lp-two-edge-confs}
\end{center}
\end{figure}

\begin{figure}
\begin{center}
\begin{tabular}{cc}
\setlength{\unitlength}{1in}
\begin{picture}(3.7,3.4)(0,0.1)
\put(-0.35,1.8){(a)}
\put(0,1.8){$\cdots$}
\put(3.55,1.8){$\cdots$}
\put(1.8,0){$\vdots$}
\put(1.8,3.48){$\vdots$}
\put(0.2,0.15){\scalebox{0.92}{\includegraphics{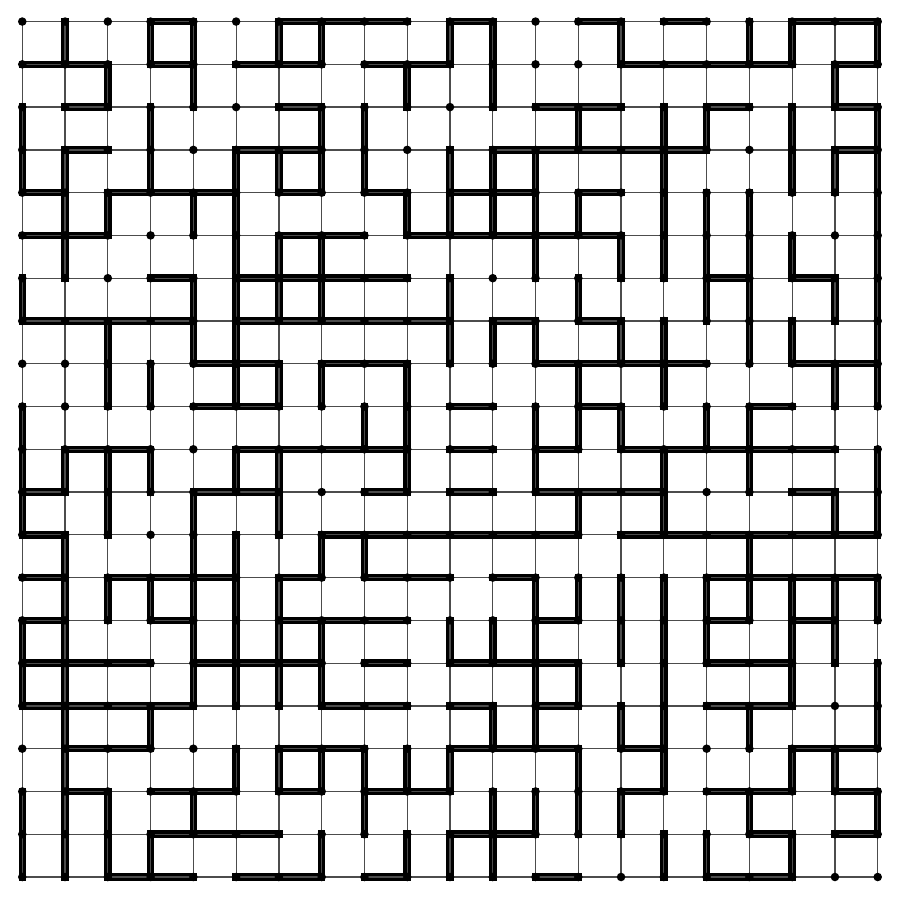}}}
\end{picture}
\\[38pt]
\setlength{\unitlength}{1in}
\begin{picture}(3.7,3.4)(0,0)
\put(-0.35,1.8){(b)}
\put(0,1.8){$\cdots$}
\put(3.55,1.8){$\cdots$}
\put(1.8,0){$\vdots$}
\put(1.8,3.48){$\vdots$}
\put(0.2,0.15){\scalebox{0.92}{\includegraphics{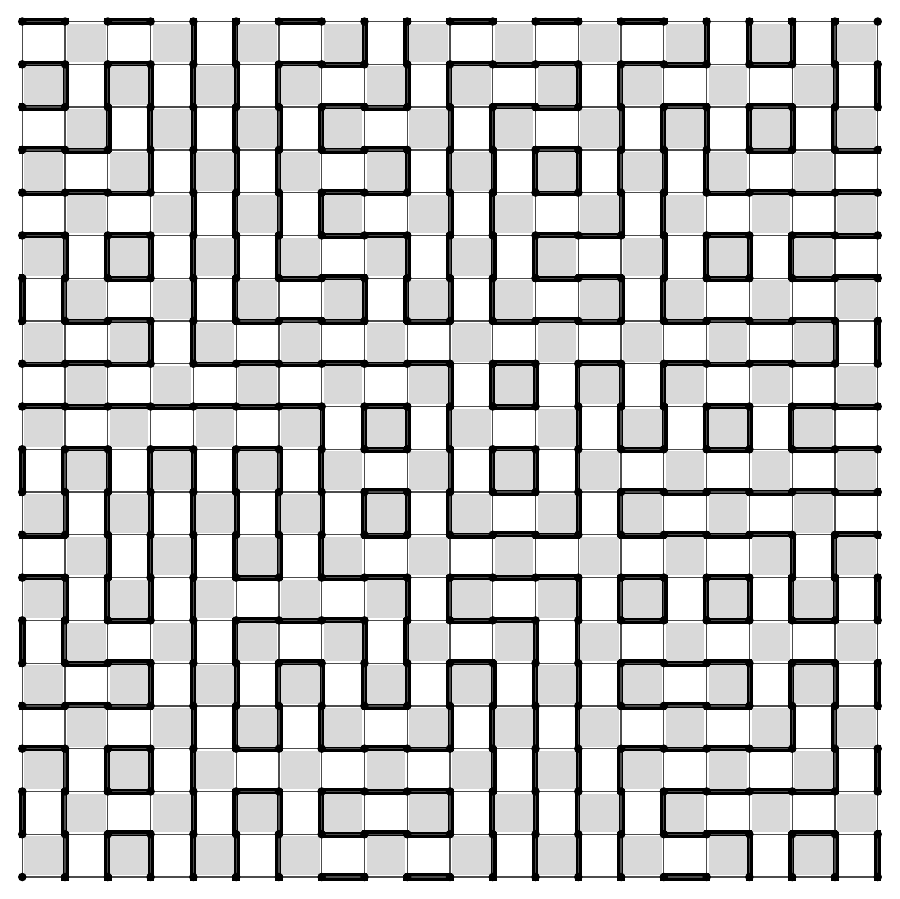}}}
\end{picture}
\end{tabular}
\caption{(a) Critical bond percolation. (b) Loop percolation.}
\label{fig:bondloop}
\end{center}

\end{figure}

Fig.~\ref{fig:bondloop} shows portions of a critical bond percolation configuration and an (unrelated) loop percolation configuration. In fact, the two models are equivalent, in the following sense. The black squares form the vertices of a graph where two black squares are adjacent if their centers differ by one of the vectors $(0,\pm 2), (\pm 2, 0)$. Identifying each black square with its center, this graph decomposes into disjoint components of ``blue'' and ``red'' sites according to parity, each component being isomorphic to $\Z^2$. Define a subgraph $G$ of the blue component whose edges are precisely those that do not cross an edge of the loop percolation graph. Then it is easy to see that $G$ is a critical bond percolation graph; see Fig.~\ref{fig:bond-loop-connection}. Conversely, it is clear that starting from the critical bond percolation graph one can reconstruct the associated loop percolation graph.
\begin{figure}[h]
\begin{center}
\begin{tabular}{ccc}
& $\vdots$ & \\
\raisebox{95pt}{$\cdots$} \hspace{-15pt} &
\scalebox{0.8}{\includegraphics{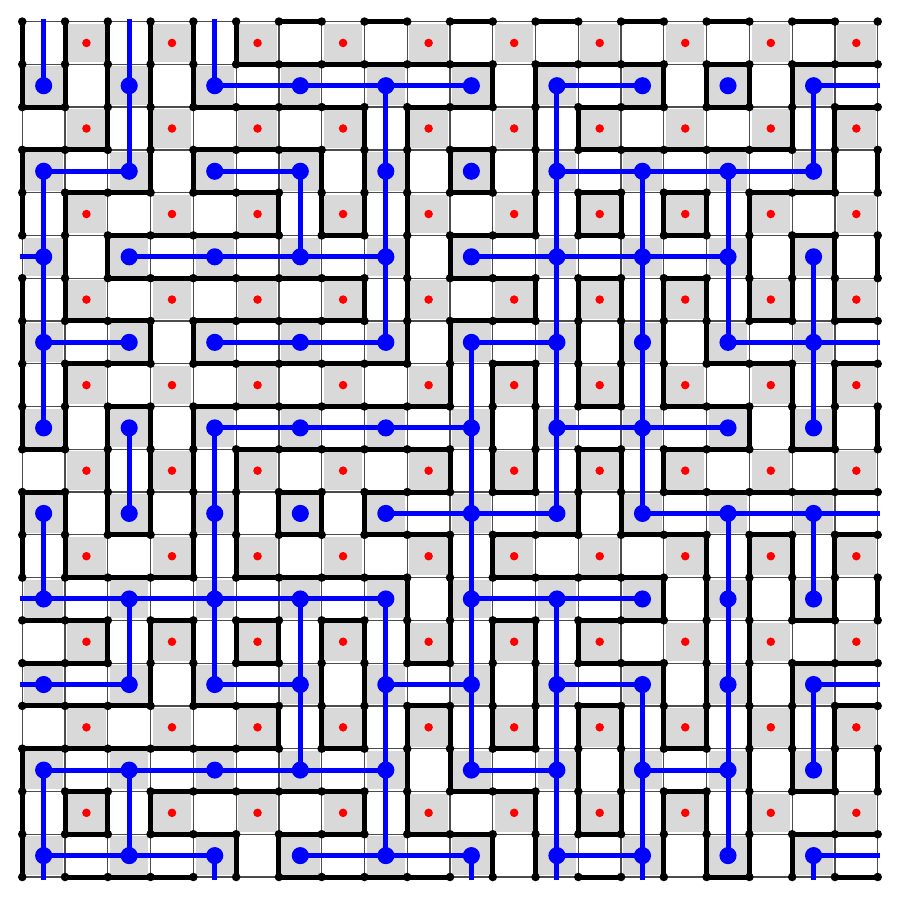}} & 
\hspace{-15pt} \raisebox{95pt}{$\cdots$} \\[-5pt] & $\vdots$
\end{tabular}
\caption{The critical bond percolation graph associated with a loop percolation graph.}
\label{fig:bond-loop-connection}
\end{center}
\end{figure}

As an immediate consequence of the above discussion, we see that the loop percolation graph almost surely decomposes into a disjoint union of cycles, or ``loops.'' Indeed, since each site of $\Z^2$ is incident to precisely two white squares, it has degree $2$, so the connected components of the graph are either loops or infinite paths; however, the presence of an infinite path would imply the existence of an infinite connected component in the ``blue'' critical bond percolation graph containing those blue sites adjacent to an edge in the infinite path, in contradiction to the well-known fact \cite[Lemma~11.12]{grimmett} 
that critical bond percolation on $\Z^2$ almost surely has no such infinite component.

Given the equivalence between loop percolation and critical bond percolation described above, one may ask why it is necessary to study loop percolation separately from critical bond percolation. We believe that there is much to be gained in doing so. In particular, the existing research on loop percolation and connections with other natural statistical physics models such as the XXZ spin chain and Fully Packed Loops indicate that studying this type of percolation as a model in its own right suggests a variety of mathematically interesting questions that one might not otherwise be led to consider by thinking directly of critical bond percolation. 

Furthermore, the techniques developed for studying loop percolation have shown that it belongs to the (loosely defined) class of so-called ``exactly solvable'' models, for which certain remarkable algebraic properties can be used to get precise formulas for various quantities associated with the model. The new results presented in this paper will provide another strong illustration of this phenomenon and give further credence to the notion that loop percolation is quite worthy of independent study. Ultimately, we hope that this line of investigation may lead to new insights that could be used to attack some of the important open problems concerning critical bond percolation.

\subsection{Noncrossing matchings}

Our discussion will focus on certain combinatorial objects known as \textbf{noncrossing matchings} that are associated with loop percolation configurations. Let us recall the relevant definitions. For two integers $a<b$, let $[a,b]$ denote the discrete interval $\{a,a+1,\ldots,b\}$. Recall that a \textbf{noncrossing matching of order $n$} is a perfect matching of the numbers $1,\ldots,2n$ (which we encode formally as a function $\pi:[1,2n]\to[1,2n]$ such that $\pi\circ\pi = \textrm{id}$ and $\pi(k)\neq k$ for all $k\in[1,2n]$; $\pi(k)$ represents the number matched to $k$), which has the additional property that there do not exist numbers $a<b<c<d$ in $[1,2n]$ such that $\pi(a)=c, \pi(b)=d$.
If $\pi(j)=k$ we say that \textbf{$j$ and $k$ are matched under $\pi$} and denote $j \matched{\pi} k$. Denote by $\noncn{n}$ the set of noncrossing matchings of order $n$. It is well-known that the number $|\noncn{n}|$ of elements in $\noncn{n}$ is $\cat(n)=\frac{1}{n+1}\binom{2n}{n}$, the $n$th Catalan number.

Similarly, an \textbf{infinite noncrossing matching} is a one-to-one and onto function $\pi:\Z\to\Z$ that satisfies $\pi\circ\pi = \textrm{id}$ and $\pi(k)\neq k$ for all $k\in\Z$, such that there do not exist integers $a<b<c<d$ for which $\pi$ matches $a$ to $c$ and $b$ to $d$. Denote by $\nonc$ the set of noncrossing matchings on $\Z$.

Both finite and infinite noncrossing matchings can be represented graphically in a diagram in which noncrossing edges, or \textbf{arcs}, are drawn between the elements of any matched pair $j\matched{\pi} k$; see Fig.~\ref{fig:example-noncrossing-matching}. As illustrated in the figure, in the case of a finite noncrossing matchings there are two equivalent representations, as a matching of $2n$ points arranged on a line or on a circle.

\begin{figure}[h]
\begin{center}
\setlength{\unitlength}{1in}
\begin{tabular}{cc}
\raisebox{20pt}{\scalebox{0.8}{\includegraphics{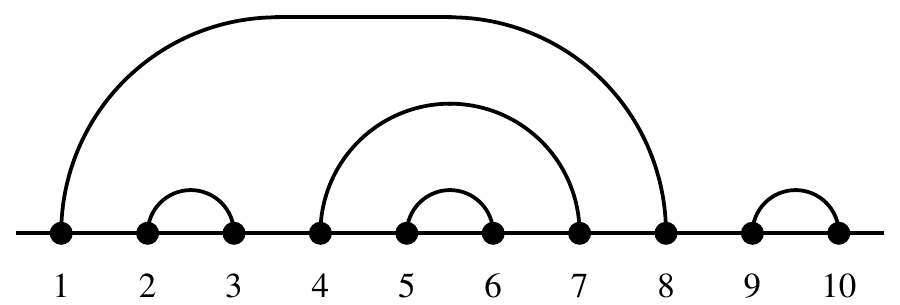}}} &
\scalebox{0.5}{\includegraphics{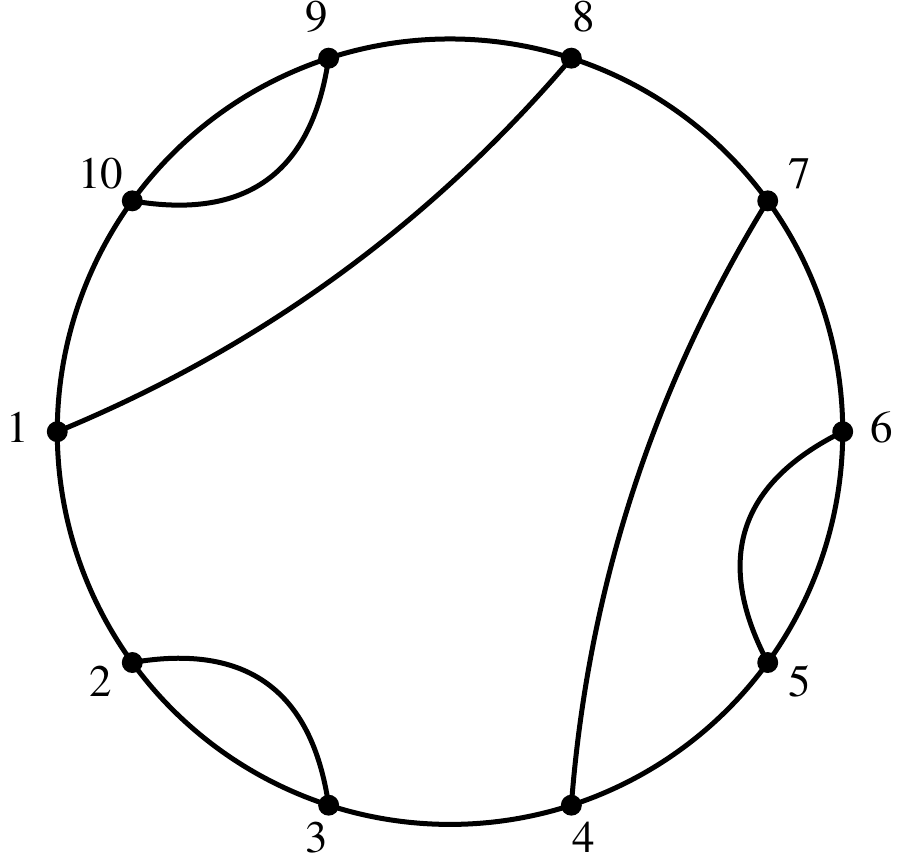}} \\ (a) & (b)
\end{tabular}

\medskip
\begin{tabular}{c}
\begin{picture}(4,1)(0,0)
\put(0.2,0){\includegraphics{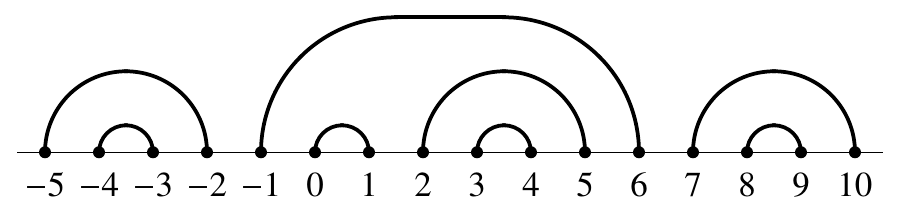}}
\put(0,0.2){\ldots}
\put(3.8,0.2){\ldots}
\end{picture} \\ (c)
\end{tabular}
\caption{(a--b) a finite noncrossing matching shown as a matching of points on a line or on a circle; (c) an infinite noncrossing matching.}
\label{fig:example-noncrossing-matching}
\end{center}
\end{figure}

\subsection{The connectivity pattern of loop percolation: the infinite case}

\label{sec:connectivity-infinite}

We now associate a random noncrossing matching called the \textbf{connectivity pattern} with loop percolation configurations. There will be two variants of the problem according to the type of region being considered, resulting in infinite or finite noncrossing matchings. We start the discussion with the infinite case. To the best of our knowledge, this variant of the problem has not been previously considered in the literature, but nonetheless the rationality phenomenon we will discuss appears most striking in this setting.

\begin{defi}[Half-planar loop percolation]
Denote by $\halflat$ the half-lattice $\halflat = \{ (m,n)\,:\, m+n\ge 0 \}$, and consider a loop percolation graph $\lpgraph_{\textrm{NE}}$ defined as before but only on $\halflat$ instead of on the entire plane. The connectivity pattern associated with it is an infinite noncrossing matching, which we denote by $\pistar$. It is defined as follows: for any $j\in\Z$, to compute $\pistar(j)$, start at the vertex $(j,-j)$. According to our convention regarding the coloring of the squares of the dual lattice, $(j,-j)$ is at the bottom-left corner of a white square, so $(j,-j)$ is incident to precisely one edge in the configuration. Follow that edge, and, continuing along the path of loop percolation edges leading out from $(j,-j)$, one eventually ends up at a vertex $(k,-k)$ on the boundary of the half-lattice (since if we consider the configuration as part of a configuration on the entire plane, the path must be a loop as noted above). In this case, we say that $j$ and $k$ are matched, and denote $j\matched{\pistar}k$; see Fig.~\ref{fig:loopperc-matching} for an illustration.
\end{defi}

\begin{figure}
\begin{center}

\vspace{-20.0pt}

\begin{tabular}{c}
\hspace{-30.0pt}
\scalebox{1}{\includegraphics{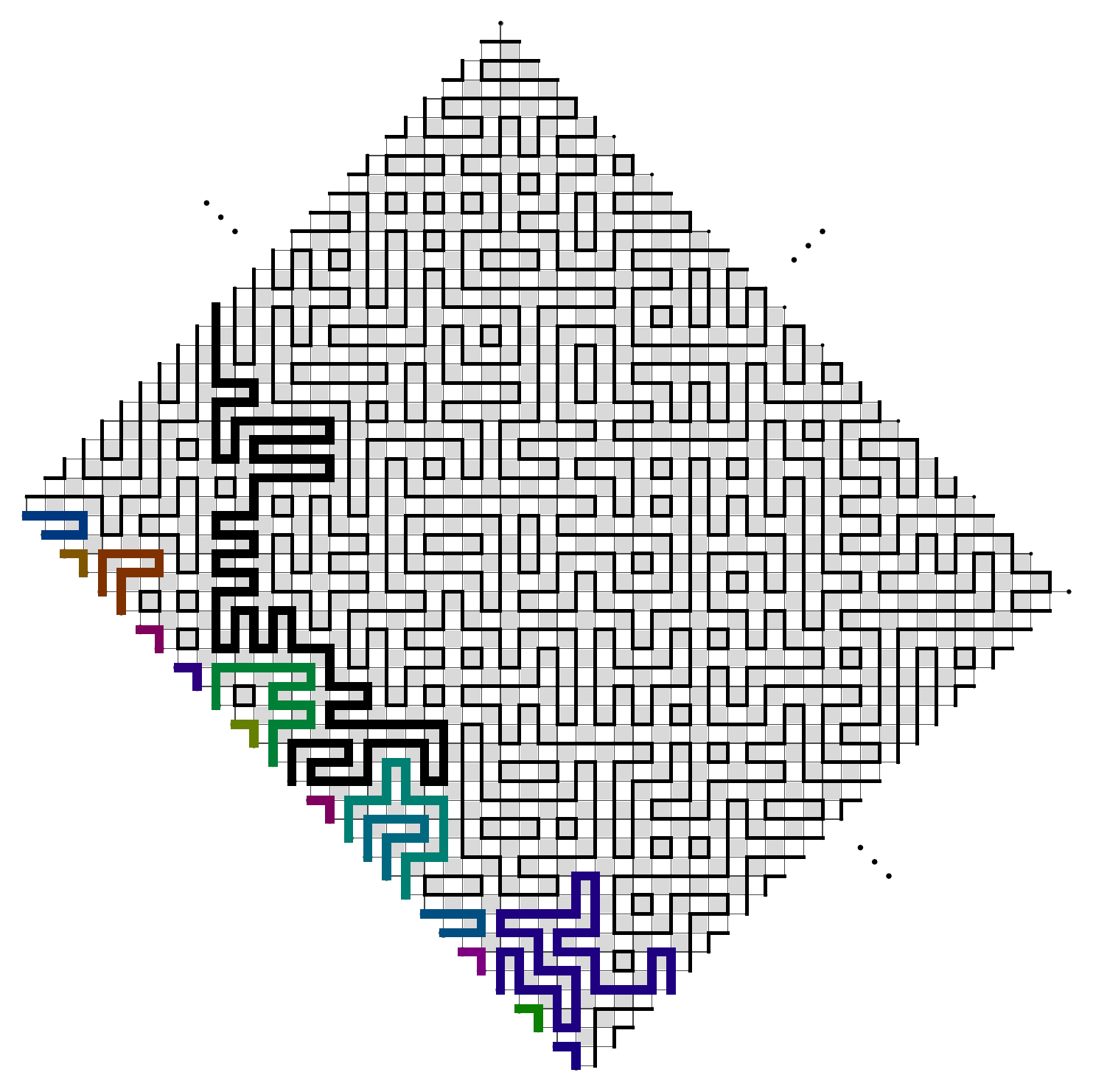}}
\\ (a) \\[15pt]
\hspace{-25pt} \raisebox{5pt}{$\cdots$}\hspace{-5pt} \scalebox{0.76}{\includegraphics{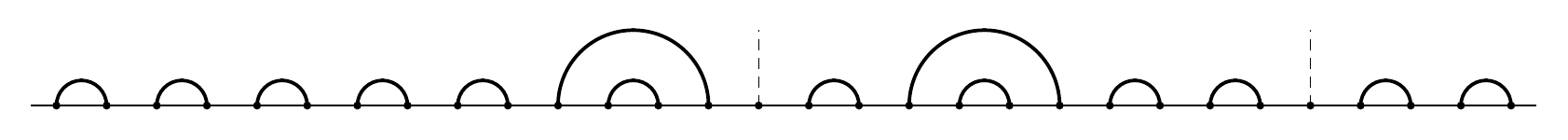}}
\hspace{-5pt}\raisebox{5pt}{$\cdots$}
\\ (b)
\end{tabular}
\caption{(a) A loop percolation configuration (the paths leading out of the vertices $(n,-n)$ are highlighted and colored to emphasize the connectivities); (b) the associated connectivity pattern.}
\label{fig:loopperc-matching}
\end{center}
\end{figure}

It is clear from elementary topological considerations that $\pistar$ is a (random) noncrossing matching. It turns out to have some remarkable distributional properties. In particular, a main motivation for the current work was the observation that the probabilities of many events associated with $\pistar$ turn out rather unexpectedly to be rational numbers which can be computed explicitly. We refer to this as the \textbf{rationality phenomenon}. We will prove it rigorously in a few cases, and conjecture it for a large family of events. Although we have not been able to prove the conjecture in this generality, we also derive
several results that establish strong empirical and theoretical evidence for the correctness of the conjecture and reduce it to a much more explicit algebraic conjecture concerning the Taylor coefficients of a certain family of multivariate polynomials.

Given a finite noncrossing matching $\pi_0\in\noncn{n}$ and an infinite noncrossing matching $\pi\in\nonc$, we say that $\pi_0$ \textbf{is a submatching of $\pi$} if $\pi_{\raisebox{2pt}{\big|} [1,2n]} \equiv \pi_0$, and in this case denote $\pi_0 \submatching \pi$.
We refer to an event of the form $\{ \pi_0 \submatching \pistar \}$ as a \textbf{submatching event}.
We will often represent a submatching event schematically by drawing the diagram associated with the submatching $\pi_0$; for example, the diagram ``\raisebox{-6pt}{\scalebox{0.2}{\includegraphics{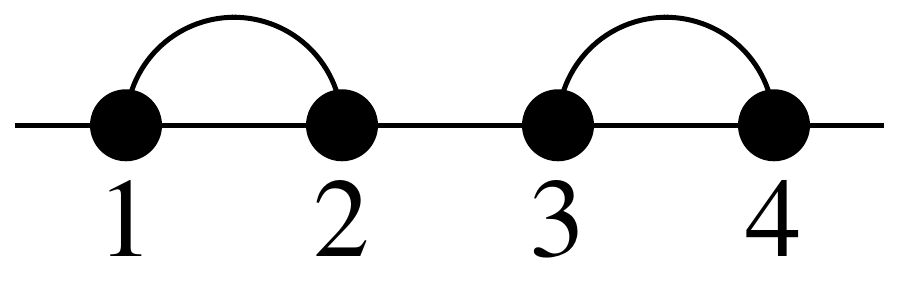}}}'' corresponds to the submatching event $\{ (\Pi_*)_{[1,4]} = (2,1,4,3) \}$. 

\vbox{
\begin{conj}[The rationality phenomenon for submatching events]
\label{conj:rationality}
For any $\pi_0\in\noncn{n}$, the probability $\prob( \pi_0 \submatching \pistar )$ of the associated submatching event is a dyadic rational number that is computable by an explicit algorithm (Algorithm~C described in Appendix~A).
\end{conj}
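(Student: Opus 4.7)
The plan is to prove the rationality phenomenon by an explicit recursive algorithm that peels off boundary coin tosses one at a time, reducing $\pi_0$ to a simpler matching while multiplying the probability by a dyadic rational. The induction runs on the number $n$ of arcs in $\pi_0$, with base case $n=0$ giving probability $1$. To make the submatching event well-defined and approximable I would first invoke the almost-sure absence of infinite loops (established earlier via the connection to critical bond percolation) so that $\{\pi_0 \submatching \pistar\}$ is measurable and is a monotone limit of cylinder events depending on only finitely many coin tosses.

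For the recursive step, by the noncrossing property any nontrivial $\pi_0\in\noncn{n}$ contains at least one small arc $(i,i+1)$ matching consecutive boundary points. I would analyze $\{\pi_0\submatching\pistar\}$ by conditioning on the values of the finitely many white squares whose coin tosses determine the initial segments of the paths emerging from boundary points $i$ and $i+1$. Each local configuration $c$ either forces those initial segments to close into the required short arc (contributing a specific dyadic weight) or is inconsistent with the event. On the surviving configurations, the residual event has the form $\{\pi_0^{(c)}\submatching \pistar_c\}$, where $\pi_0^{(c)}\in\noncn{n-1}$ and $\pistar_c$ is the connectivity pattern of loop percolation on a slightly modified half-plane. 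Using the translation, $90^{\circ}$ rotation, and reflection symmetries of $\halflat$, I would verify that $\pistar_c$ has the same distribution as $\pistar$ up to a relabeling of boundary points. This produces a relation
\[
\prob(\pi_0 \submatching \pistar) \;=\; \sum_c 2^{-k_c}\, \prob(\pi_0^{(c)} \submatching \pistar),
\]
and the induction hypothesis completes the dyadic rationality claim.

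The main obstacle is the bookkeeping needed to show that, after conditioning on local boundary coin tosses, the residual half-plane is distributionally equivalent to the original $\halflat$. This is delicate when the small arc $(i,i+1)$ is nested deep inside longer arcs of $\pi_0$, because the local reduction must then be performed in a bulk region rather than at the geometric boundary, and it is not a priori clear that the relevant coin tosses decouple from the rest of the configuration through the longer arcs enclosing them. Overcoming this would likely require either a clever choice of which small arc to peel first (for instance, the leftmost, so that reflection symmetry can be used to move it to positions $(1,2)$) or else the algebraic identities coming from the $q$KZ equation and wheel polynomial machinery referenced in the abstract. Given that the paper explicitly reduces the general case to a family of conjectural constant term identities, I expect that a purely probabilistic proof in full generality is not achievable along these lines, and that the algebraic route is ultimately essential.
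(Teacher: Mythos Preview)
The first thing to notice is that the statement you are attempting to prove is labeled \textbf{Conjecture} in the paper, not Theorem. The paper does \emph{not} contain a proof of it; on the contrary, the paper establishes the rationality phenomenon only in a handful of special cases (Theorems~\ref{thm:two-explicit-cases} and~\ref{thm:event135}) and otherwise reduces the general statement, via the cylindrical model and the wheel-polynomial machinery, to the still-open Conjecture~\ref{conj:perturbed-asm-iden} on constant term identities. So there is no ``paper's own proof'' to compare your proposal to. You seem to realize this in your final paragraph, which is good, but it means the proposal as written cannot be assessed as a proof of the stated result---at best it is a heuristic plan.

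Turning to the substance of that plan: the step that fails is precisely the one you flag as the ``main obstacle,'' and it fails more seriously than you suggest. After you condition on the coin tosses in a finite collection of white squares near the boundary points $i$ and $i+1$, the residual random environment is loop percolation on $\halflat$ with a finite set of squares \emph{fixed}. There is no translation, rotation, or reflection of $\halflat$ that maps such a configuration to a fresh half-planar loop percolation: the boundary of the unconditioned region is no longer a straight diagonal line, and the connectivity pattern of the remaining boundary points is not distributed as $\pistar$ (or any relabeling thereof). In particular, the recursion you write down,
\[
\prob(\pi_0 \submatching \pistar) \;=\; \sum_c 2^{-k_c}\, \prob(\pi_0^{(c)} \submatching \pistar),
\]
has no reason to hold, because the probability on the right should be taken with respect to a \emph{different} measure depending on $c$, not with respect to $\pistar$ again. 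Indeed, a recursion of this type would force all submatching probabilities to be extremely simple dyadic rationals determined by a short local computation, whereas the actual values in Table~\ref{table:prob-submatching} (such as $97/512$ and $214093/2^{21}$) arise only after passing through the global algebraic structure encoded in the qKZ polynomials and the ASM enumeration. Your closing remark that ``the algebraic route is ultimately essential'' is, as far as current knowledge goes, exactly right.
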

}

\begin{table}[h]
\begin{tabular}{ccc}
\raisebox{51.5pt}{
\begin{tabular}{c|c}
Event & \textrm{Probability} \\
\hline \\[-1ex]
\scalebox{0.14}{\includegraphics{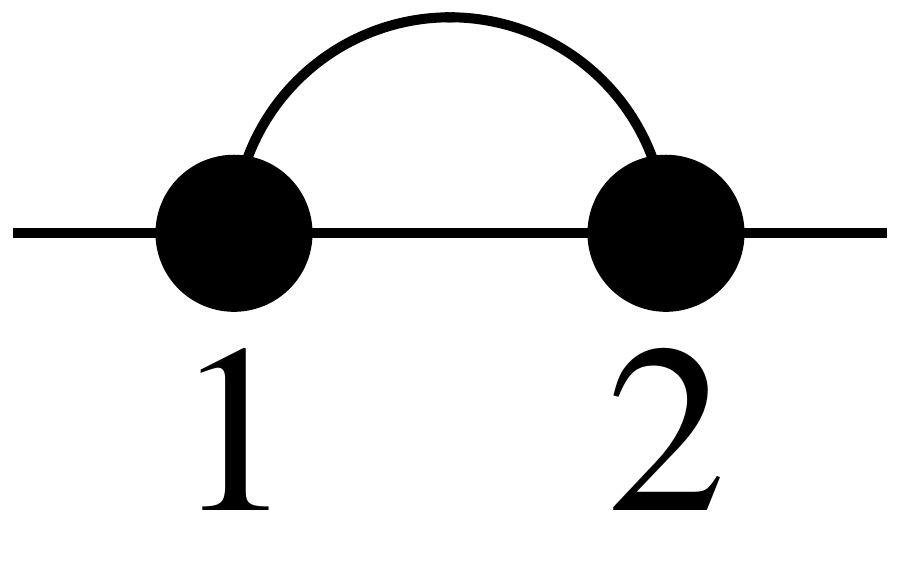}} & 
\raisebox{8pt}{$\displaystyle \frac{3}{8}$}
\\[10pt]
\scalebox{0.3}{\includegraphics{event12-34}} & 
\raisebox{11pt}{$\displaystyle \frac{97}{512}$}
\\[10pt]
\scalebox{0.3}{\includegraphics{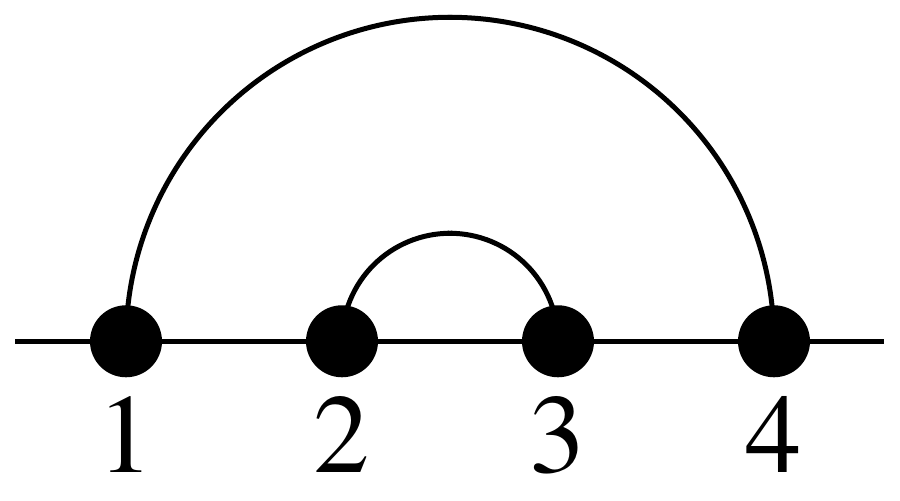}} & 
\raisebox{12pt}{$\displaystyle \frac{59}{1024}$}
\end{tabular}
}
& &
\begin{tabular}{c|c}
Event & \textrm{Probability} \\
\hline \\
\scalebox{0.3}{\includegraphics{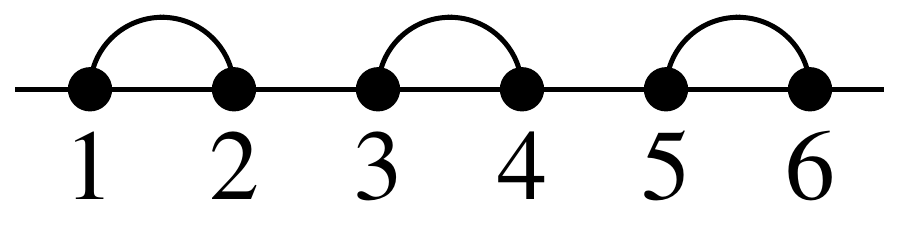}} & 
\raisebox{8pt}{$\displaystyle \frac{214093}{2^{21}}$}
\\[10pt]
\scalebox{0.3}{\includegraphics{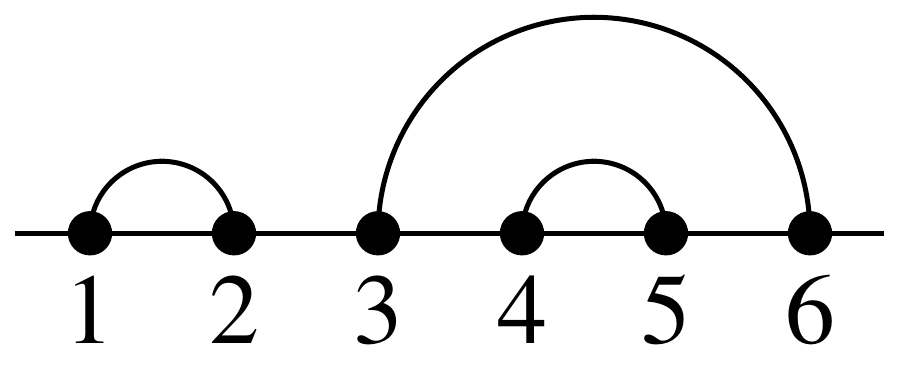}} & 
\raisebox{8pt}{$\displaystyle \frac{69693}{2^{21}}$}
\\[10pt]
\scalebox{0.3}{\includegraphics{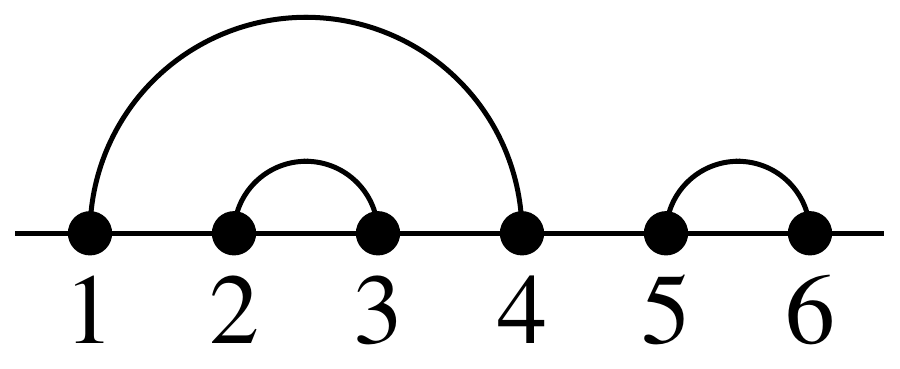}} & 
\raisebox{8pt}{$\displaystyle \frac{69693}{2^{21}}$}
\\[10pt]
\scalebox{0.3}{\includegraphics{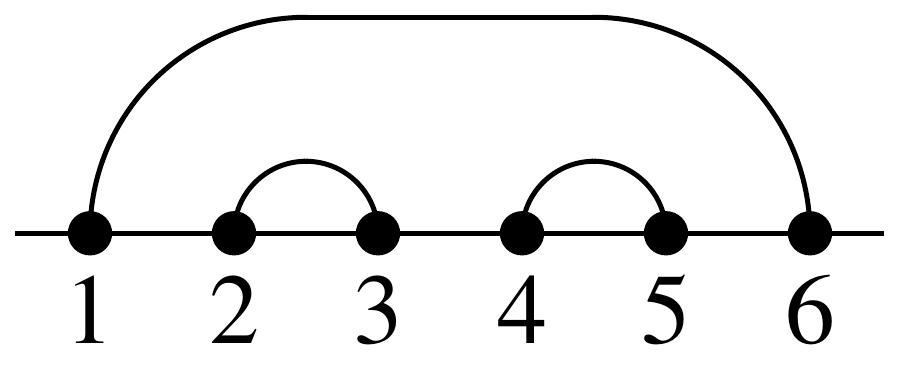}} & 
\raisebox{8pt}{$\displaystyle \frac{37893}{2^{21}}$}
\\[10pt]
\scalebox{0.3}{\includegraphics{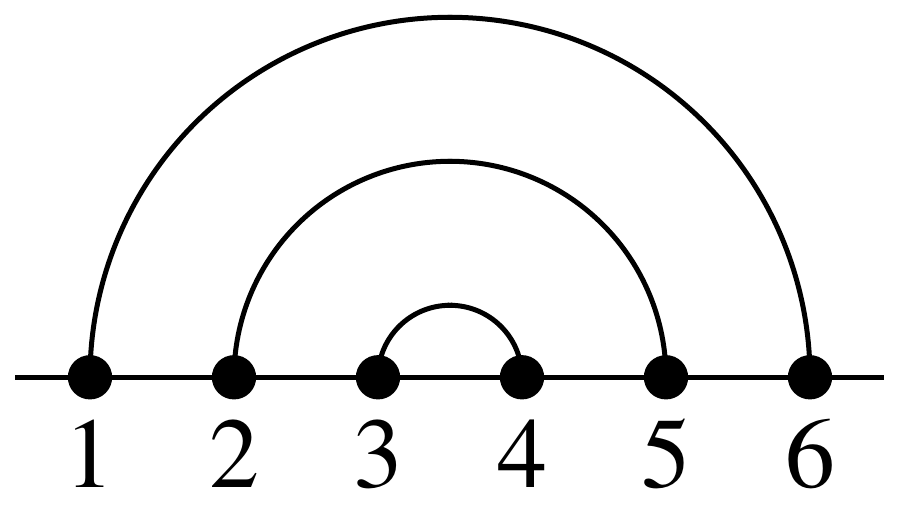}} & 
\raisebox{14pt}{$\displaystyle \frac{7737}{2^{21}}$}
\end{tabular}
\end{tabular}

\vspace{10pt}
\caption{The probabilities of some submatching events in the noncrossing matching $\pistar$. The first two are established rigorously (Theorem~\ref{thm:two-explicit-cases}), others are conjectured.}
\label{table:prob-submatching}
\end{table}

Table~\ref{table:prob-submatching} lists some of the simplest submatching events and their conjectured probabilities. We can prove the first two cases.

\begin{thm}
\label{thm:two-explicit-cases}
We have
\begin{align}
\prob\left( 
\raisebox{-8pt}{\scalebox{0.1}{\includegraphics{event12}}}
\submatching \pistar
\right) &= \frac{3}{8}, \label{eq:pistar-three-eighths} \\[3pt]
\prob\left( 
\raisebox{-6pt}{\scalebox{0.2}{\includegraphics{event12-34}}}
\submatching \pistar
\right) &= \frac{97}{512}.
\label{eq:pistar-ninety-seven}
\end{align}
\end{thm}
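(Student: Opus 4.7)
The plan is to relate these half-planar probabilities to exact formulas for the analogous cylindrical loop percolation via the Razumov-Stroganov-Cantini-Sportiello theorem. Let $\pistarn$ denote the connectivity pattern for loop percolation on a semi-infinite cylinder of circumference $2n$. First I would argue that $\prob(\pi_0 \submatching \pistar) = \lim_{n \to \infty} \prob(\pi_0 \submatching \pistarn)$ by coupling the two models in a neighborhood of the boundary vertices $(1,-1),\ldots,(2k,-2k)$ involved in the event. Since the event depends only on the loops through these finitely many vertices, and those loops are almost surely finite (by the correspondence with critical bond percolation, which has no infinite cluster, as recalled in the introduction), a tail bound on their diameter---coming e.g. from Russo-Seymour-Welsh estimates applied to the dual bond percolation---would make the coupling rigorous.

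On the cylinder, the Razumov-Stroganov-Cantini-Sportiello theorem identifies the distribution of $\pistarn$ with the normalized Perron-Frobenius eigenvector of the dense $O(1)$ loop model transfer matrix, whose components are (up to normalization) the Fully Packed Loop counts:
$$
\prob(\pistarn = \pi) \;=\; \frac{A_\pi^{(n)}}{A_n}, \qquad \pi \in \noncn{n},
$$
where $A_n$ is the number of $n \times n$ alternating sign matrices. Consequently $\prob(\pi_0 \submatching \pistarn) = A_n^{-1} \sum_{\pi:\, \pi_0 \submatching \pi} A_\pi^{(n)}$. For $\pi_0 = \{(1,2)\}$, the sum in the numerator admits a closed-form evaluation via the sum rules derived from the qKZ equation by Di Francesco and Zinn-Justin; passing to the limit with the classical asymptotics of $A_n$ yields the value $3/8$. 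For $\pi_0 = \{(1,2),(3,4)\}$, one uses the nested analog of this sum rule, or equivalently extracts a specific coefficient of the wheel polynomial via the representation that is the main technical contribution of the present paper; the answer is $97/512$.

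The main obstacle is the second identity: the value $3/8$ for a single boundary arc has been observed in the physics literature and follows fairly directly from the Razumov-Stroganov framework, but $97/512$ is more delicate and essentially requires the full wheel-polynomial machinery developed later in the paper to evaluate the two-arc sum exactly at finite $n$ and then pass to the limit. A purely elementary alternative---direct case analysis of loop configurations near the boundary of the half-plane---does not obviously simplify: the ``immediate'' path $(1,-1) \to (2,-1) \to (2,-2)$ contributes only $1/4$ toward $3/8$, and summing the contributions of all longer excursions returns one to a loop-model enumeration of essentially the original complexity.
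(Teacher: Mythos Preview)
Your overall architecture---pass to the cylinder, use exact formulas there, take $n\to\infty$---matches the paper's, and your coupling argument is essentially the Lemma in Subsection~\ref{sec:consequences-halfplane} (though note that RSW is overkill: the paper just uses that the half-planar loops are a.s.\ finite, so the event that a relevant path exits a large box has probability tending to zero). Where you go astray is in identifying the actual inputs on the cylinder side, and in particular in your assessment of the second identity.

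The paper does \emph{not} invoke Cantini--Sportiello for either formula, nor does \eqref{eq:pistar-ninety-seven} require the wheel-polynomial machinery of Section~\ref{sec:wheel-polynomials}. Both finite-$n$ identities come instead from the Fonseca--Zinn-Justin formula \eqref{eq:anti-cluster} for the anti-cluster probabilities $\prob(\anticluster_k^{(n)})$. For $k=2$ this gives \eqref{eq:three-eights-finite-n} by complementation. For $k=4$, inclusion--exclusion on the four events $\{i\matched{\pistarn}j\}$ with $1\le i<j\le 4$ collapses (using rotation-invariance and the noncrossing constraint) to
\[
\prob(\anticluster_4^{(n)}) = 1 - 3\cdot\frac{3}{2}\cdot\frac{n^2+1}{4n^2-1} + \prob\!\left(\raisebox{-6pt}{\scalebox{0.2}{\includegraphics{event12-34}}}\submatching\pistarn\right),
\]
and solving gives \eqref{eq:identity97}; this is Theorem~\ref{thm:ninety-seven-finiten}. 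So the ``main obstacle'' you flag is illusory: the $97/512$ case is only marginally harder than $3/8$ and uses nothing beyond \eqref{eq:anti-cluster} for $k=2,4$ plus elementary manipulations. Your proposal as written does not contain this argument, and the route you sketch---extracting a wheel-polynomial coefficient via the paper's new results---while presumably workable, is not what the paper does and is substantially more involved than necessary.
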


As we explain in Subsection~\ref{sec:consequences-halfplane}, the relation \eqref{eq:pistar-three-eighths} will follow as an immediate corollary of a result due to Fonseca and Zinn-Justin (Theorem~\ref{thm:fonseca-zinn-justin} in Subsection~\ref{sec:loop-perc-cylinder}). The second relation \eqref{eq:pistar-ninety-seven} can also be derived from the same theorem but in a slightly less trivial manner; see Theorem~\ref{thm:ninety-seven-finiten} in Subsection~\ref{sec:cylinder-new-results}.

We note that the rationality phenomenon is not restricted to submatching events; a similar statement to Conjecture~\ref{conj:rationality} also appears to hold for a larger family of \textbf{finite connectivity events}, which are events of the form $\bigcap_{ (j,k)\in A } \left\{ j\matched{\pistar} k \right\}$ for some finite set $A\subset \Z\times \Z$. For example, in Theorem~\ref{thm:event135} in Subsection~\ref{sec:consequences-halfplane} we derive rigorously the value $135/1024$ as the probability of a certain finite connectivity event that is not a submatching event. However, we mostly focus on submatching events since our main results (Theorems~\ref{thm:explicit-formulas-finite-n} and~\ref{thm:submatching-event-expansion}) pertain to them and as a result provide strong theoretical evidence for a rationality phenomenon in this setting.

\subsection{Loop percolation on a cylinder: background}

\label{sec:cylinder-background}

Our study of half-planar loop percolation and its connectivity pattern is based on the fact that the model can be approached in a fairly straightforward manner as a limit of a model with a different geometry of a ``semi-infinite cylinder.'' This corresponds to loop percolation in a semi-infinite diagonal strip whose two infinite boundary edges are identified. The precise definition is as follows.

\begin{defi}[Cylindrical loop percolation] For any $n\ge 1$, the cylindrical loop percolation graph $\lpgraph_n$ has vertex set 
\begin{equation} \label{eq:v-lpgraph-n}
V(\lpgraph_n) = \{ (x,y)\in\Z^2\,:\, x+y\ge 0, -n+1 \le x-y\le n \},
\end{equation}
where for any $k\ge 0$ we identify the two vertices $(-n+1+k,n-1+k)$ and $(n+k,-n+k)$. The edges are sampled randomly in the same manner as the usual loop percolation.
\end{defi}

There is a convenient way of representing the cylindrical model as a tiling of a strip of the form $[0,2n]\times[0,\infty)$ with two kinds of square tiles, known as \textbf{plaquettes}, which are shown in Fig.~\ref{fig:plaquettes}. To see the correspondence, first rotate the diagonal strip in the original loop percolation configuration counter-clockwise by 45 degrees, then replace each (rotated) white lattice square with a plaquette whose connectivities correspond to the percolation edges in the white square. Once the plaquette tiling representation is drawn, one can wrap it around a cylinder to obtain a three-dimensional picture; see Fig.~\ref{fig:cylindrical-loop-perc}. The representation using plaquettes is the one traditionally used in most of the existing literature.

\begin{figure}[h]
\begin{center}
\begin{tabular}{ccc}
\scalebox{0.2}{\includegraphics{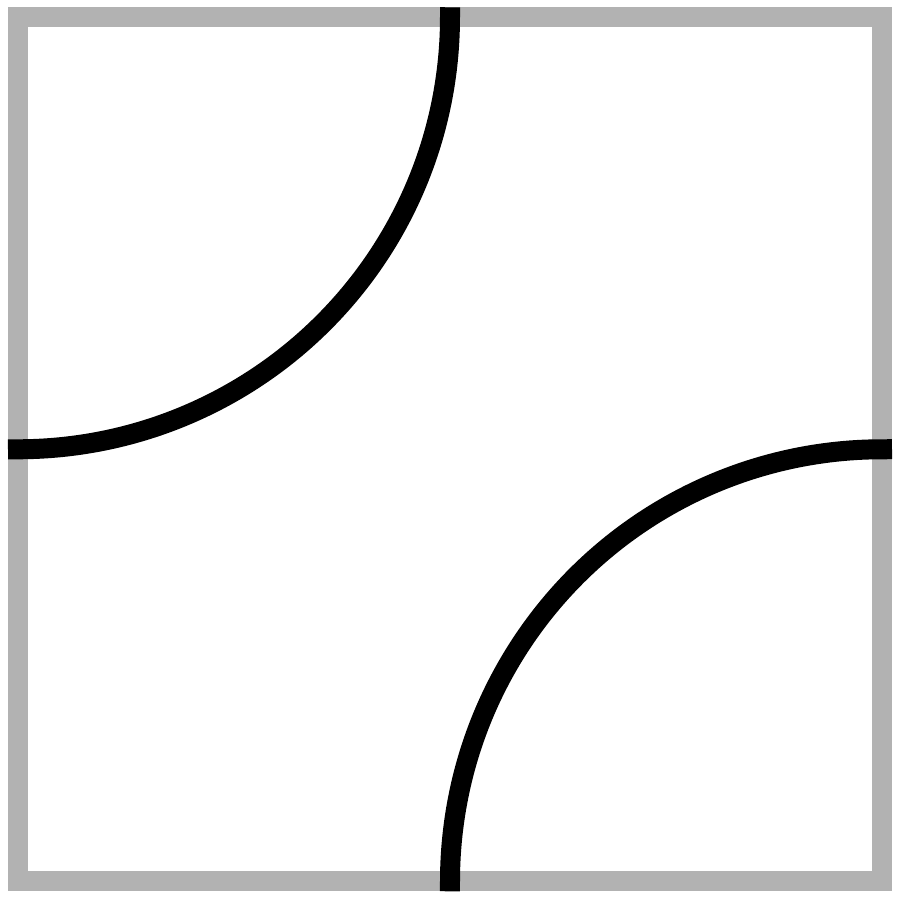}}
& \hspace{30.0pt} &
\scalebox{0.2}{\includegraphics{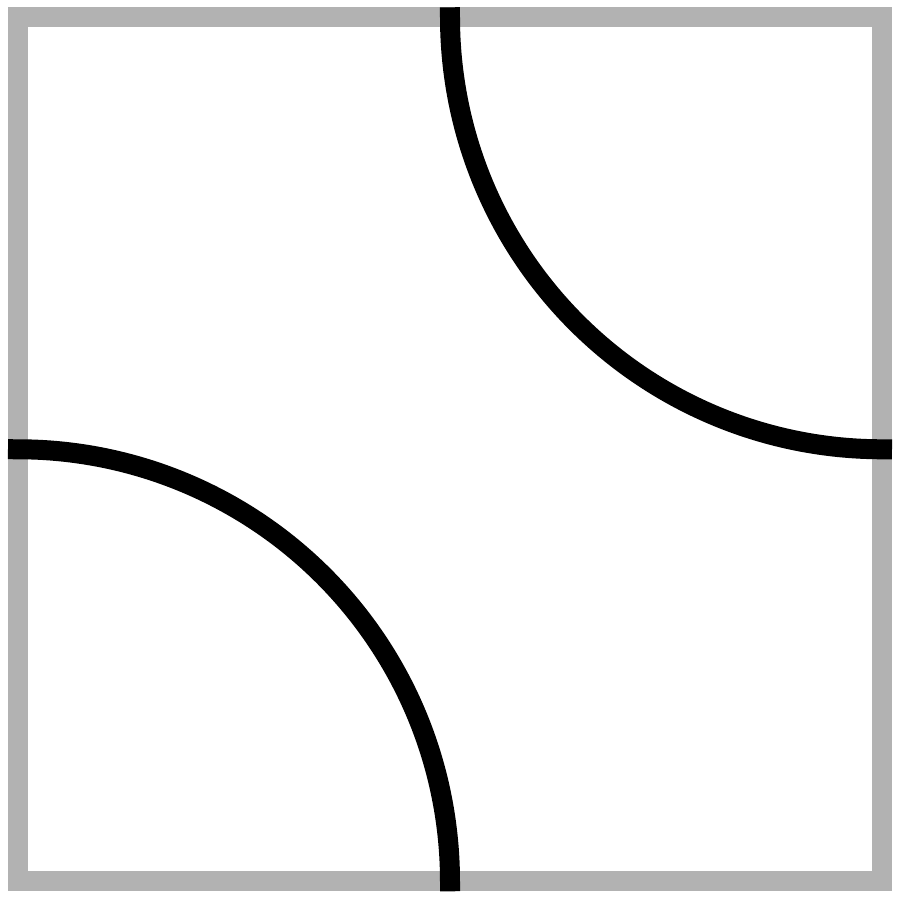}}
\end{tabular}
\caption{The two types of plaquettes.}
\label{fig:plaquettes}
\end{center}
\end{figure}

\begin{figure}
\begin{center}
\setlength{\unitlength}{0.5in}
\begin{picture}(10,12.5)(1,0)
\put(0,5){\scalebox{1}{\includegraphics{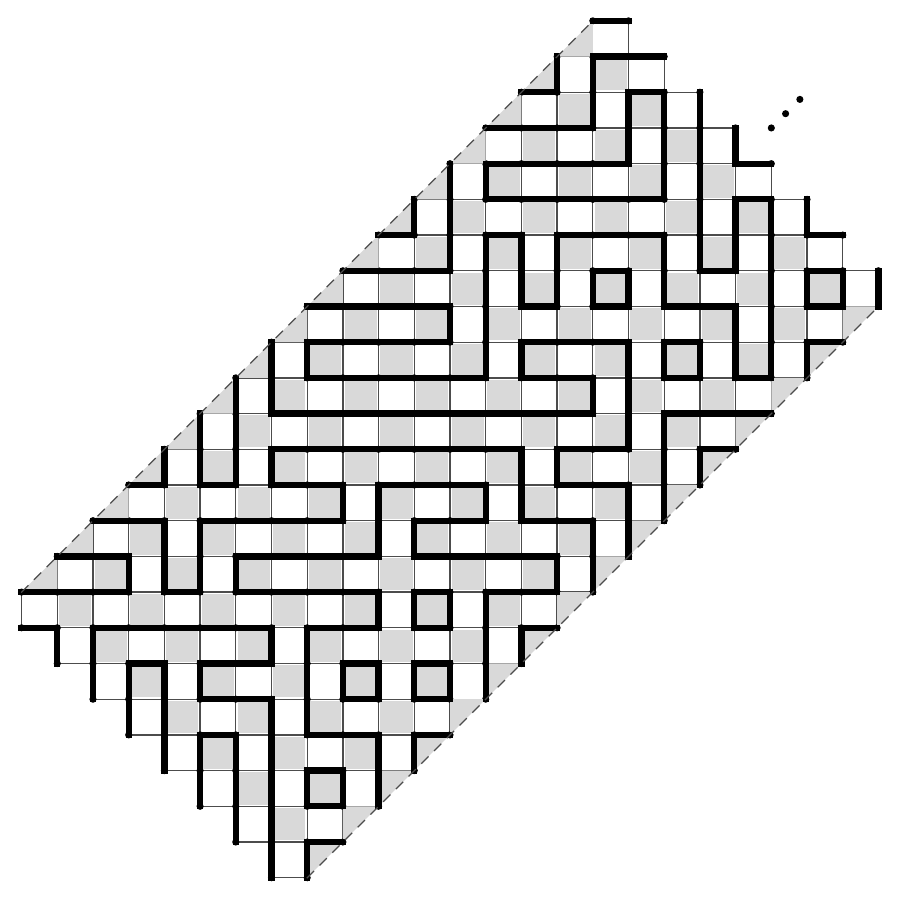}}}
\put(8,5){\scalebox{0.76}{\includegraphics{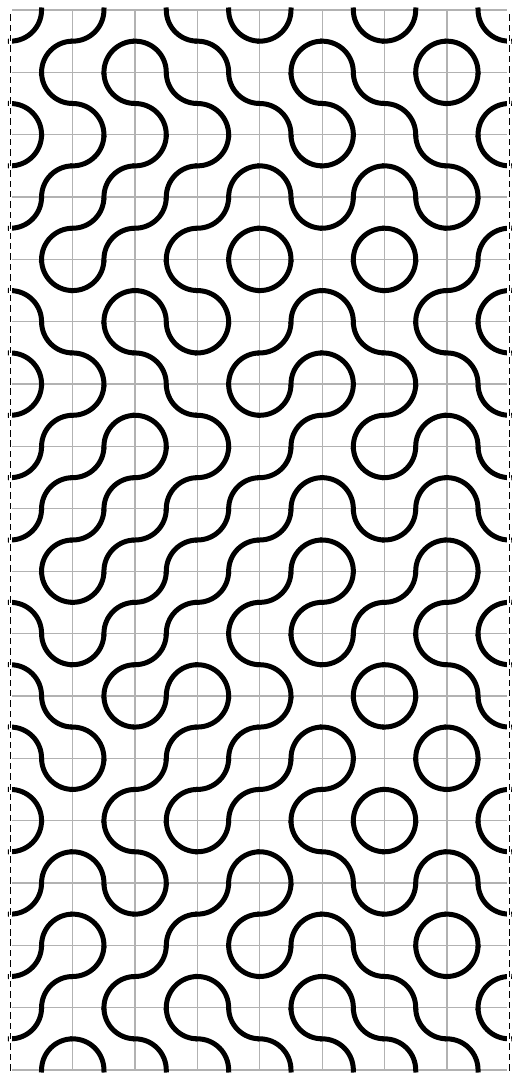}}}
\put(9.65,11.7){\circle*{0.05}}
\put(9.65,11.85){\circle*{0.05}}
\put(9.65,12){\circle*{0.05}}
\put(1.5,0.05){\scalebox{0.16}{\includegraphics{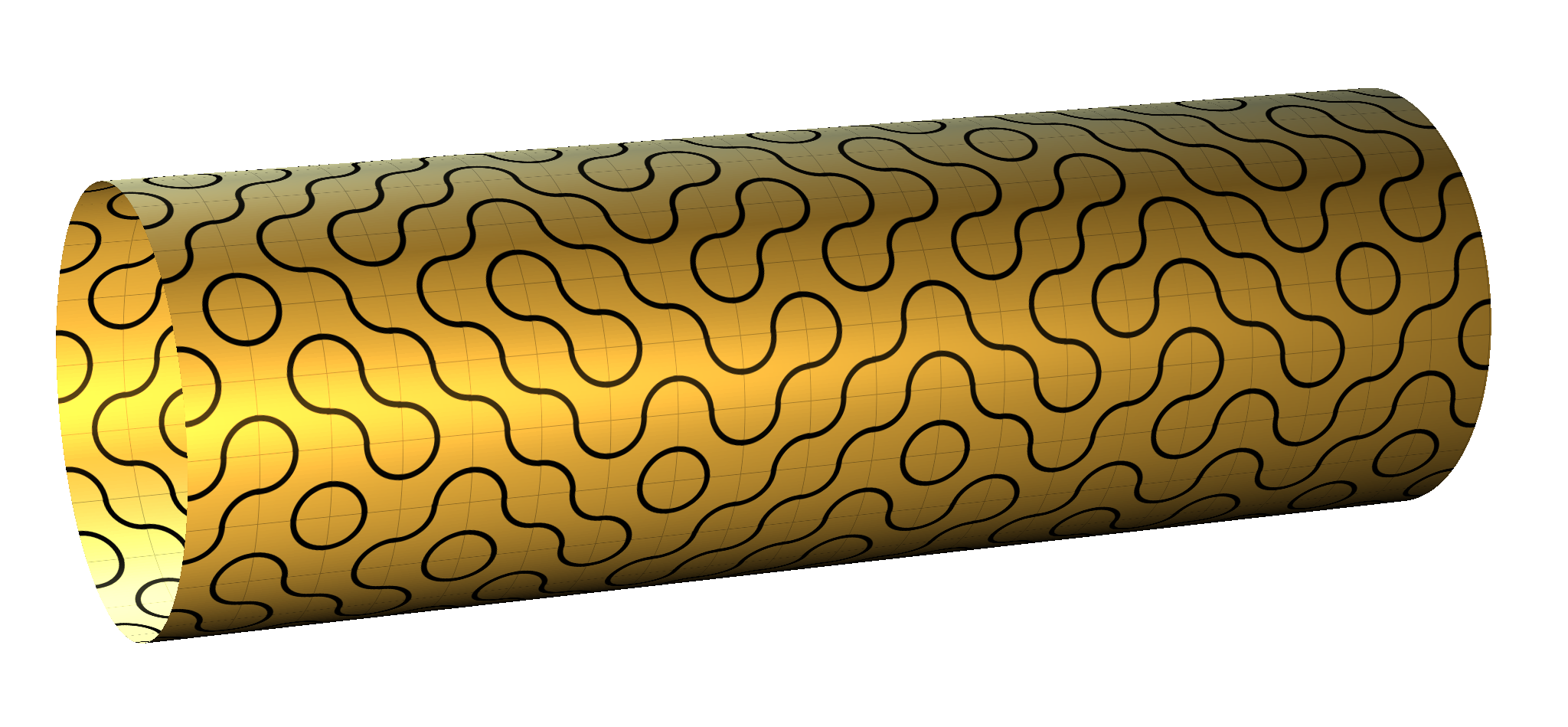}}}
\put(10.3,2.6){\circle*{0.05}}
\put(10.45,2.61){\circle*{0.05}}
\put(10.6,2.62){\circle*{0.05}}
\put(4,4.5){(a)}
\put(9.5,4.5){(b)}
\put(6,0){(c)}
\end{picture}
\caption{(a) Cylindrical loop percolation: the north-west and south-east boundary edges of the infinite diagonal strip are identified along the dashed lines; (b) representation of the same configuration as a tiling of two kinds of plaquettes; (c) wrapping a tiling of plaquettes around a cylinder.
}
\label{fig:cylindrical-loop-perc}
\end{center}
\end{figure}

Let $\pistarn$ denote the connectivity pattern associated with the cylindrical loop percolation graph $\lpgraph_n$, defined analogously to $\pistar$ by following each of the paths originating at the vertices $(-j,j), -n+1\le j\le n$ until it re-emerges in a vertex $(-k,k)$. It is easy to see that $\pistarn$ is a random \emph{finite} noncrossing matching of order $n$; the only justification needed is the following trivial lemma that shows that it is well-defined.

\begin{lem}
In the graph $\lpgraph_n$, almost surely all paths are finite.
\end{lem}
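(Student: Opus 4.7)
My plan is to prove the lemma by a Borel--Cantelli argument based on ``blocking rows'' in the plaquette representation of Fig.~\ref{fig:cylindrical-loop-perc}. In this picture $\lpgraph_n$ becomes a random tiling of the strip $[0,2n]\times[0,\infty)$, with the two vertical sides identified to form a semi-infinite cylinder of circumference $2n$, by unit plaquettes that are i.i.d.\ of one of the two types of Fig.~\ref{fig:plaquettes}, each with probability $1/2$. For each integer $h\ge 0$, let the \emph{$h$th row} be the horizontal ring of $2n$ plaquettes occupying heights between $h$ and $h+1$; because distinct rows involve disjoint collections of coin tosses, the rows are mutually independent.

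I would then call a row \emph{blocking} if none of the arcs lying inside the row connect a midpoint of its lower horizontal boundary to a midpoint of its upper horizontal boundary (so no loop-percolation path can cross such a row). The key claim is that each row is blocking with some positive probability $p_n>0$, and it suffices to exhibit a single blocking configuration. Label the plaquette types $A$ and $B$ and index the plaquettes of a row by $i\in\Z/2n\Z$. At each internal vertical side-midpoint $s_i$ between plaquette $i-1$ and plaquette $i$, two arcs meet, one from each adjacent plaquette; a direct inspection of the four possible local type pairs $(T_{i-1},T_i)$ shows that the two arcs at $s_i$ link a top-midpoint to a bottom-midpoint precisely when $T_{i-1}=T_i$. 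Hence if the $2n$ plaquette types in a row strictly alternate (which is consistent around the cycle because $2n$ is even), every arc-chain inside the row is of length two and joins either two top-midpoints or two bottom-midpoints; the row is blocking. Both alternating patterns contribute, giving $p_n\ge 2\cdot 2^{-2n}>0$.

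Since the rows are independent and each is blocking with probability at least $p_n>0$, the second Borel--Cantelli lemma yields that almost surely infinitely many rows are blocking. Any connected component of $\lpgraph_n$ of infinite size would have to reach arbitrarily large heights (the cylinder is only infinite in the upward direction), and so would have to cross infinitely many blocking rows, which is impossible by definition. Hence the connected component of every fixed vertex is almost surely confined to a finite-height portion of the cylinder, which contains only finitely many vertices; in particular every path in $\lpgraph_n$ is almost surely finite. The only nontrivial step is the local case analysis at the side-midpoints, and this is routine bookkeeping, so I do not anticipate any genuine obstacle.
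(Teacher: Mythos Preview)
Your proof is correct and follows essentially the same approach as the paper: both arguments pass to the plaquette representation, observe that a row of strictly alternating plaquette types blocks any path from crossing (your local side-midpoint analysis makes explicit what the paper calls ``bouncing back''), note that each row independently has this alternating structure with probability $2\cdot 2^{-2n}=1/2^{2n-1}$, and conclude via (second) Borel--Cantelli that infinitely many such rows occur almost surely, forcing all paths to be finite. Your write-up is simply a more detailed version of the paper's one-paragraph sketch.
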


\begin{proof}
Consider the tiling representation of the model as in Fig.~\ref{fig:cylindrical-loop-perc}(b).
It is easy to see that a row in the tiling in which the plaquettes alternate between the two types of plaquettes forces all paths below it to be finite by ``bouncing back'' any path attempting to cross the row. Any row has probability $1/2^{2n-1}$ to have such structure, independently of other rows, so almost surely there will be infinitely many such rows.
\end{proof}

The cylindrical connectivity pattern $\pistarn$ has been the subject of extensive research in recent years, and appears in a surprising number of ways that do not seem immediately related to each other. Before presenting our new results on the behavior of $\pistarn$, let us survey some of the known theory. 

For a matching $\pi\in\noncn{n}$ denote $\mu_\pi = \prob(\pistarn = \pi)$. A natural question is how to compute the probability vector $\boldsymbol{\mu}_n = (\mu_\pi)_{\pi\in\noncn{n}}$. It is easy to see that $\boldsymbol{\mu}_n$ is the stationary distribution of a Markov chain on $\noncn{n}$ where the transitions $\pi\to \pi'$ correspond to the operation of extending the cylinder by one additional row of random plaquettes. This operation leaves the distribution $\boldsymbol{\mu}_n$ invariant since it results in a plaquette tiling equal in distribution to the original one. Formally, there is a Markov transition matrix $T_n^{(1/2)} = (t^{(1/2)}_{\pi,\pi'})_{\pi,\pi'\in\noncn{n}}$ such that
$$ \boldsymbol{\mu}_n T_n^{(1/2)} = \boldsymbol{\mu}_n, $$
where $\boldsymbol{\mu}_n$ is considered as a row vector.
(In the statistical physics literature $T_n^{(1/2)}$ is usually referred to as a transfer matrix.)

The reason for the notation $T_n^{(1/2)}$ is that we can generalize the model and define a matrix $T_n^{(p)}$ for any $0< p<1 $, corresponding to a tiling of independently sampled random plaquettes in which each choice between the two types of plaquette is made by tossing a coin with bias $p$. Remarkably, the value of $p$ does not affect the distribution of the connectivity pattern.

\begin{thm}
\label{thm:commuting-matrices}
The transition matrices $(T_n^{(p)})_{0< p <1}$ are a commuting family of matrices. Consequently, since they are all stochastic, they all share the same row eigenvector $\boldsymbol{\mu}_n$ associated with the eigenvalue $1$.
\end{thm}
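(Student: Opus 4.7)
The plan is to realize each transfer matrix $T_n^{(p)}$ as a product of local operators on $\C[\noncn{n}]$ lying in a Temperley--Lieb algebra, and to deduce commutation from a Yang--Baxter equation satisfied by those operators. First I would define, for each $i\in\{1,\ldots,2n\}$ (read cyclically), an operator $e_i$ on $\C[\noncn{n}]$ that joins strands $i$ and $i+1$ of the input noncrossing matching---this is precisely the effect of appending a single ``horizontal'' plaquette at position $i$. Because closed loops in the dense $O(1)$ model carry weight $1$ (a newly created loop is simply erased), the generators satisfy $e_i^2=e_i$, together with $e_ie_{i\pm 1}e_i=e_i$ and $[e_i,e_j]=0$ for $|i-j|\ge 2$. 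A single plaquette sampled with bias $p$ then acts as the stochastic operator $R_i(p)=(1-p)I+p\,e_i$, and decomposing a full row of the cylindrical tiling into its $2n$ independent plaquettes in the standard staggered order exhibits $T_n^{(p)}$ as a fixed ordered product of the $R_i(p)$'s.

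Next, I would prove a Yang--Baxter relation by direct computation in the Temperley--Lieb algebra. Expanding $R_i(a)R_{i+1}(b)R_i(c)$ and $R_{i+1}(c)R_i(b)R_{i+1}(a)$ and reducing via the relations above, both sides decompose in the basis $\{I,e_i,e_{i+1},e_ie_{i+1},e_{i+1}e_i\}$; matching coefficients forces (and is equivalent to) the one-parameter constraint
$$b \;=\; \frac{a+c-ac}{1-ac}.$$
The key point is that for any pair of biases $p$ and $q$, this equation can be solved for a third parameter $r$ that braids them together.

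With the YBE in hand, the plan is to carry out the standard ``train'' (or railway) argument to derive $T_n^{(p)}T_n^{(q)}=T_n^{(q)}T_n^{(p)}$. Pictorially, one represents $T_n^{(p)}T_n^{(q)}$ as a stack of two plaquette rows wrapped around the cylinder, inserts an auxiliary $R$-operator at one seam, and slides it around the cylinder by repeated local YBE moves; after a full loop the two rows have exchanged their biases and the auxiliary operator returns to its starting position, yielding the desired commutation. The hard part is the bookkeeping around the cyclic seam: the YBE itself is a short algebraic identity, but one must verify that the sliding procedure closes up correctly on the cylinder and that no residual operator is left behind after going all the way around.

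Finally, the eigenvector statement follows as a short postscript. Each $T_n^{(p)}$ is stochastic and irreducible (any two noncrossing matchings can be interpolated by finitely many plaquette rows with positive probability), so it has a unique stationary distribution. Writing $\boldsymbol{\mu}_n$ for the stationary of $T_n^{(1/2)}$, commutation gives, for any $q\in(0,1)$,
$$\bigl(\boldsymbol{\mu}_n T_n^{(q)}\bigr)T_n^{(1/2)} \;=\; \boldsymbol{\mu}_n T_n^{(1/2)} T_n^{(q)} \;=\; \boldsymbol{\mu}_n T_n^{(q)},$$
so $\boldsymbol{\mu}_n T_n^{(q)}$ is also $T_n^{(1/2)}$-stationary and therefore equals $\boldsymbol{\mu}_n$; that is, $\boldsymbol{\mu}_n T_n^{(q)}=\boldsymbol{\mu}_n$, as required.
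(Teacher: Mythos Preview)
Your overall strategy---deduce commutativity from a Yang--Baxter equation, then infer the common stationary vector from irreducibility---is exactly what the paper invokes (it gives no proof of its own, simply referring to the YBE argument in \cite{zinn-justin}, Sections~2.2.2 and~3.3.2). The YBE you compute for $R_i(p)=(1-p)I+p\,e_i$ at loop weight~$1$ is correct, and your final paragraph on the eigenvector is fine.

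The gap is in your decomposition of $T_n^{(p)}$. A staggered product of operators $R_i(p)$ acting directly on $\C[\noncn{n}]$ reduces to the identity when $p=0$; but in this model the row operator with all type-$0$ plaquettes is the cyclic rotation~$\rho$, not the identity (this is worked out in Appendix~B, where $f_{\mathbf{0}}=\rho$ and $f_{\mathbf{a}_k}=\rho\circ e_k$). The reason is that each plaquette carries four half-edges: the N/S ones feed the physical strands, while the E/W ones join \emph{neighbouring plaquettes within the same row}, so a row carries an auxiliary strand winding once around the cylinder. Consequently $T_n^{(p)}$ is not a product of local operators on $\noncn{n}$ but the closure over that auxiliary line of a monodromy operator $R_{0,1}(p)\,R_{0,2}(p)\cdots R_{0,2n}(p)$. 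Your YBE is precisely the relation needed to braid two such monodromy operators through one another via an intertwiner $R_{0,0'}$, and the closure of the auxiliary lines is what makes the train argument close up on the cylinder with no leftover factor. So the plan works once the decomposition is corrected; as written, your product $\prod_i R_i(p)$ is the transfer matrix of a different (brick-wall) lattice, not the one studied here.
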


Note that using the transition matrix $T_n^{(1/2)}$, or $T_n^{(p)}$ for any fixed $p$, is not necessarily the easiest way to compute $\boldsymbol{\mu}_n$, since to compute the entries of $T_n^{(1/2)}$ one has to count the number of possible rows of $n$ plaquettes (out of the $2^{2n}$ possibilities) that would cause a given state transition $\pi\to\pi'$. It turns out that there is a more convenient (and more theoretically tractable) system of linear equations for computing $\boldsymbol{\mu}_n$, which involves a simpler matrix $H_n$ that arises as a limiting case of the $T_n^{(p)}$ as $p\to 0$, or symmetrically as $p\to 1$.

To define $H_n$, first define for each $k\in[1,2n]$ a mapping $e_k:\noncn{n}\to\noncn{n}$ given by
\begin{equation}
\label{eq:temperley-lieb-gens}
e_k(\pi)(m) = \begin{cases} 
\pi(m) & \textrm{if }m\notin \{k,k+1,\pi(k),\pi(k+1)\}, \\
k+1 & \textrm{if }m=k, \\
k & \textrm{if }m=k+1, \\
\pi(k+1) & \textrm{if }m=\pi(k), \\
\pi(k) & \textrm{if }m=\pi(k+1),
\end{cases}
\end{equation}
where $k+1$ is interpreted as $1$ if $k=2n$.
In words, $e_k(\pi)$ is the matching $\pi'$ obtained from $\pi$ by unmatching the pairs $k\matched{\pi} \pi(k)$ and $k+1\matched{\pi}\pi(k+1)$ and replacing them with the matched pairs $k\matched{\pi'}k+1$ and $\pi(k)\matched{\pi'}\pi(k+1)$. In the case when $k$ and $k+1$ are already matched under $\pi$, nothing happens and $e_k(\pi)=\pi$. It is easy to see that in general $e_k(\pi)$ is a noncrossing matching. We refer to the $e_k$ as the \textbf{Temperley-Lieb operators}. (They generate an algebra known as the \textbf{Temperley-Lieb algebra} \cite{degier, temperley-lieb}, but this has no bearing on the present discussion.)

\begin{thm}
\label{thm:hamiltonian-eigenvector}
Define a square matrix $H_n$ with rows and columns indexed by elements of $\noncn{n}$ by
\begin{equation} \label{eq:def-h-inf-gen}
(H_n)_{\pi,\pi'} =  2n-\#\left\{ 1\le k\le 2n\,:\, e_k(\pi)=\pi' \right\}.
\end{equation}
Then $\boldsymbol{\mu}_n$ satisfies
\begin{equation*} \label{eq:mun-hn-zero}
\boldsymbol{\mu}_n H_n = 0.
\end{equation*}
\end{thm}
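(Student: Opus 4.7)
The strategy is to extract $\boldsymbol{\mu}_n H_n = 0$ as the first-order consequence, at $p=0^+$, of the stationarity identity $\boldsymbol{\mu}_n T_n^{(p)} = \boldsymbol{\mu}_n$, which by Theorem~\ref{thm:commuting-matrices} holds for every $p\in(0,1)$ (since the commuting, stochastic family $\{T_n^{(p)}\}$ shares $\boldsymbol{\mu}_n$ as a left eigenvector with eigenvalue $1$). Concretely, I would expand $T_n^{(p)}$ to first order in $p$ near $p=0^+$, show that
\[
T_n^{(p)} \;=\; I \;-\; p\,H_n \;+\; O(p^2),
\]
and then substitute this into the stationarity identity and divide by $p$ before sending $p\to 0^+$.

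To carry out the expansion, decompose the row-transfer step as a product over the $2n$ independent single-plaquette contributions at cyclic positions $k\in\{1,\ldots,2n\}$: each plaquette is of ``pass-through'' type with probability $1-p$ (acting on the matching as the identity) or of ``cap/cup'' type with probability $p$ (acting as the Temperley--Lieb operator $e_k$). Writing $E_k$ for the $0/1$ matrix with $(E_k)_{\pi,\pi'} = [\![ e_k(\pi) = \pi' ]\!]$, the independence of the plaquettes lets the expectation pass through the product, giving (in an order determined by the geometry of one row)
\[
T_n^{(p)} \;=\; \prod_{k=1}^{2n}\bigl((1-p)\,I + p\,E_k\bigr).
\]
Applying the product rule at $p=0$, only one factor at a time is replaced by its derivative $E_k - I$ while every other factor evaluates to $I$, so the coefficient of $p^1$ is $\sum_{k=1}^{2n}(E_k - I)$, independent of the chosen order. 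Hence
\[
T_n^{(p)} \;=\; I + p\sum_{k=1}^{2n}(E_k - I) + O(p^2) \;=\; I - p\,H_n + O(p^2),
\]
with $H_n = \sum_{k=1}^{2n}(I - E_k)$ having precisely the matrix entries displayed in \eqref{eq:def-h-inf-gen} (the diagonal constant $2n$ being $\sum_k 1$ times the Kronecker delta). Substituting into $\boldsymbol{\mu}_n T_n^{(p)} = \boldsymbol{\mu}_n$, rearranging to $p\,\boldsymbol{\mu}_n H_n = O(p^2)$, and taking $p\to 0^+$ yields the claim.

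The main point requiring care is the convention matching the two plaquette types in Fig.~\ref{fig:plaquettes} with the pair $\{I, e_k\}$: one needs the ``probability $p$'' plaquette to be the cap/cup one, so that the $p\to 0^+$ limit of $T_n^{(p)}$ really is the identity rather than a product of $e_k$'s. If the convention happens to be reversed, the identical argument runs at $p\to 1^-$ via the symmetric rewrite $(1-p)I + p\,E_k = E_k + (1-p)(I - E_k)$ together with the idempotence $E_k^2 = E_k$ (which follows from $e_k\circ e_k = e_k$). Beyond this bookkeeping, and a small check that the ``row'' of plaquettes is indeed the correct geometric object reproducing all $2n$ generators (if it only reproduces $n$ generators at a fixed parity, the missing half is recovered from the rotational symmetry of the cylinder, which preserves $\boldsymbol{\mu}_n$), the proof is a routine first-order expansion in $p$.
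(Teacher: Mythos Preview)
Your overall strategy---differentiate the stationarity relation $\boldsymbol{\mu}_n T_n^{(p)}=\boldsymbol{\mu}_n$ at $p=0$---is exactly the paper's approach. The gap is in the geometry of a single row of plaquettes: your assumption that the ``pass-through'' row acts as the identity is false in this model. As the paper shows (see Fig.~\ref{fig:temperley-lieb-plaquettes}), a row consisting entirely of type-$0$ plaquettes implements the cyclic rotation $\rho$ on the boundary labels, so $T_n^{(0)}=R$ (the permutation matrix of $\rho$), not $I$. Flipping a single plaquette at position $k$ gives $f_{\mathbf{a}_k}=\rho\circ e_k=e_{k-1}\circ\rho$, not $e_k$. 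Consequently your product formula $T_n^{(p)}=\prod_{k}((1-p)I+pE_k)$ is not correct here, and neither of your fallback scenarios (switching to $p\to 1^-$, or invoking rotation to recover ``missing'' generators at the other parity) repairs this: at $p=1$ one again gets a rotation rather than the identity, and the issue is not a parity deficit but an overall shift built into every row.

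The correct first-order expansion is
\[
\frac{d}{dp}\Big|_{p=0} T_n^{(p)} \;=\; -2nR+\sum_{k=1}^{2n} R E_{k-1}\;=\;-R\Big(2nI-\sum_j E_j\Big)\;=\;-R\,H_n,
\]
so differentiating stationarity yields only $\boldsymbol{\mu}_n R H_n=0$. To finish, one needs the separate fact that $\boldsymbol{\mu}_n$ is rotation-invariant, i.e.\ $\boldsymbol{\mu}_n R=\boldsymbol{\mu}_n$, which then gives $\boldsymbol{\mu}_n H_n=0$. This extra ingredient is essential and is precisely what the paper's proof invokes; your argument as written does not reach it.
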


Theorems~\ref{thm:commuting-matrices} and \ref{thm:hamiltonian-eigenvector} seem to be well-known to experts in the field, but their precise attribution is unclear to us. As explained in \cite{zinn-justin} (Sections 2.2.2 and 3.3.2), Theorem~\ref{thm:commuting-matrices} follows from the Yang-Baxter equation. A fact equivalent to Theorem~\ref{thm:hamiltonian-eigenvector} is mentioned without proof in \cite[Section 3]{mitra-etal}. We provide a simple proof of this result in Appendix A.

It is easy to see that the matrix $M_n=I-\frac{1}{2n}H_n$ (where $I$ is the identity matrix) is nonnegative and stochastic, i.e., it is a Markov transition matrix, associated with yet another Markov chain that has $\boldsymbol{\mu}_n$ as its stationary vector. That is, $\boldsymbol{\mu}_n$ can be computed by solving the vector equation $\boldsymbol{\mu}_n M_n = \boldsymbol{\mu}_n$, which can be written more explicitly as the linear system 
\begin{equation} \label{eq:razumov-stroganov-linear-system}
\mu_\pi = \frac{1}{2n} \sum_{k=1}^{2n} \sum_{\begin{array}{c} \scriptstyle \pi'\in\noncn{n}, \\[-5pt] \scriptstyle e_k(\pi')=\pi \end{array}}  \mu_{\pi'} \qquad (\pi \in\noncn{n}).
\end{equation}
The Markov chain $(\pi_m)_{m\ge 0}$ associated with the transition matrix $M_n$ has the following simple description as a random walk on $\noncn{n}$, known as the \textbf{Temperley-Lieb random walk} or \textbf{Temperley-Lieb stochastic process} \cite{pearce-etal}: start with some initial matching $\pi_0$; at each step, to obtain $\pi_{m+1}$ from $\pi_m$, choose a uniformly random integer $k \in \{1,2,\ldots,2n\}$ (independently of all other random choices), and set $\pi_{m+1}=e_k(\pi_m)$.

The application of the maps $e_k$ can be represented graphically by associating with each $e_k$ a ``connection diagram'' of the form
\begin{center}
\scalebox{0.8}{\includegraphics{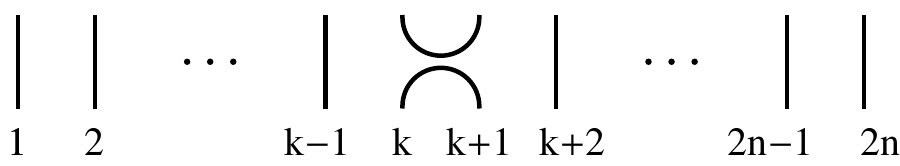}}
\end{center}
which will be ``composed'' with the diagram of the matching $\pi$ to which $e_k$ is applied by drawing one diagram below the other. (Note that to get the correct picture in the case $k=2n$ one should interpret the diagram as being drawn around a cylinder.) By composing a sequence of such diagrams with a noncrossing matching diagram one can compute the result of the application of the corresponding maps to the matching; see Fig.~\ref{fig:pipe-diagram}. Using this graphical interpretation, it can be seen easily that the stationary distribution $\boldsymbol{\mu}_n$ is realized as the distribution of the connectivity pattern of endpoints in an infinite composition of $e_k$ connection diagrams (where the values of $k$ are i.i.d.\ discrete uniform random variables in $\{1,\ldots,2n\}$), drawn on a semi-infinite cylinder. This is illustrated in Fig.~\ref{fig:tl-randomwalk-graphical}.

\begin{figure}
\begin{center}
\setlength{\unitlength}{0.3in}
\begin{picture}(23,7.5)(2,-1)
\put(3.2,0){\scalebox{0.7}{\includegraphics{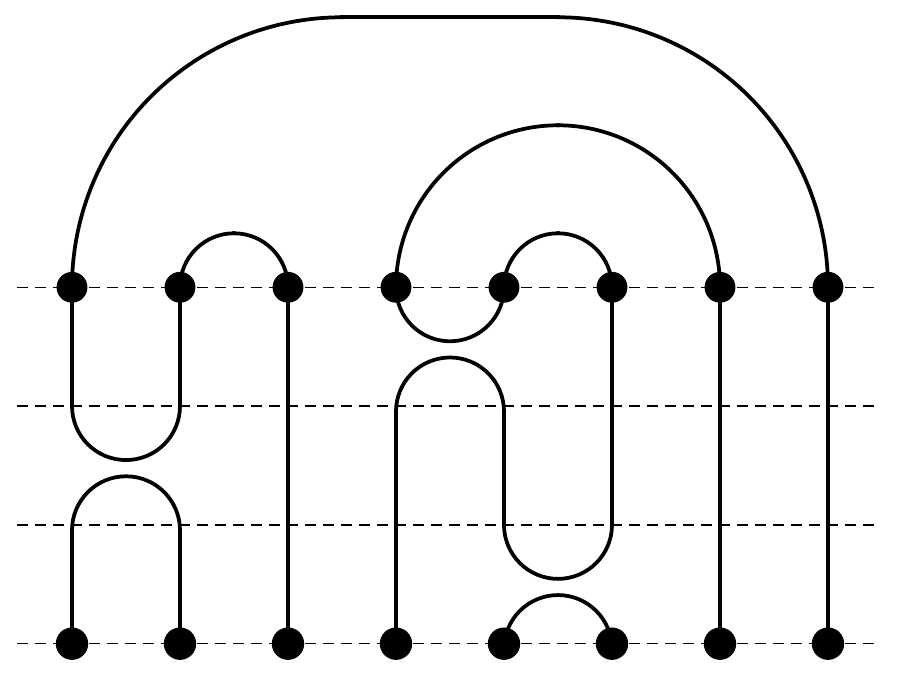}}}
\put(2.4,4.8){$\left\{ \vphantom{ \begin{array}{c}\ \\[24pt] \  \end{array} }\right.$}
\put(0.2,5.0){\footnotesize Original}
\put(0,4.5){\footnotesize matching $\pi$}
\put(2.4,2.98){$\left\{ \vphantom{ \begin{array}{c}\ \\[-10pt] \  \end{array} }\right.$}
\put(2.4,1.8){$\left\{ \vphantom{ \begin{array}{c}\ \\[-10pt] \  \end{array} }\right.$}
\put(2.4,0.63){$\left\{ \vphantom{ \begin{array}{c}\ \\[-10pt] \  \end{array} }\right.$}
\put(1.7,2.98){\footnotesize $e_4$}
\put(1.7,1.8){\footnotesize $e_1$}
\put(1.7,0.63){\footnotesize $e_5$}
\put(13.3,1){\scalebox{0.7}{\includegraphics{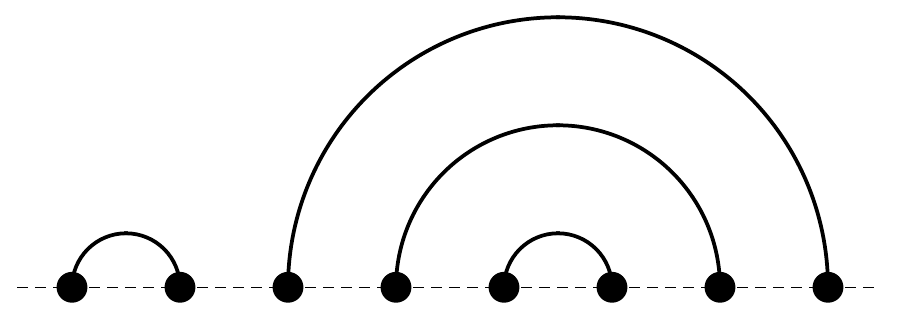}}}
\put(11.8,2){\vector(1,0){1.3}}
\put(6.5,-1){(a)}
\put(17,-1){(b)}
\put(16,0.3){$\pi'=e_5 e_1 e_4 \pi$}
\end{picture}
\caption{Graphical representation of the application of a sequence of operators $e_k$, $1\le k\le 2n$ as a ``composition of diagrams'': (a) the diagrams associated with operators $e_4,e_1,e_5$ are attached to the diagram of the original matching; (b) the lines are ``pulled'' (and any loops are discarded) to arrive at the diagram for the transformed matching. }
\label{fig:pipe-diagram}
\end{center}
\end{figure}

\begin{figure}
\begin{center}
\setlength{\unitlength}{0.3in}
\begin{picture}(20,9.4)
\put(-1,0){\scalebox{0.48}{\includegraphics{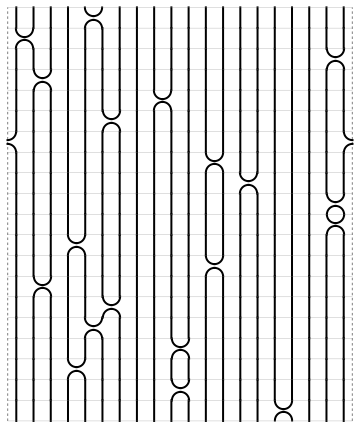}}}
\put(7,1){\scalebox{0.14}{\includegraphics{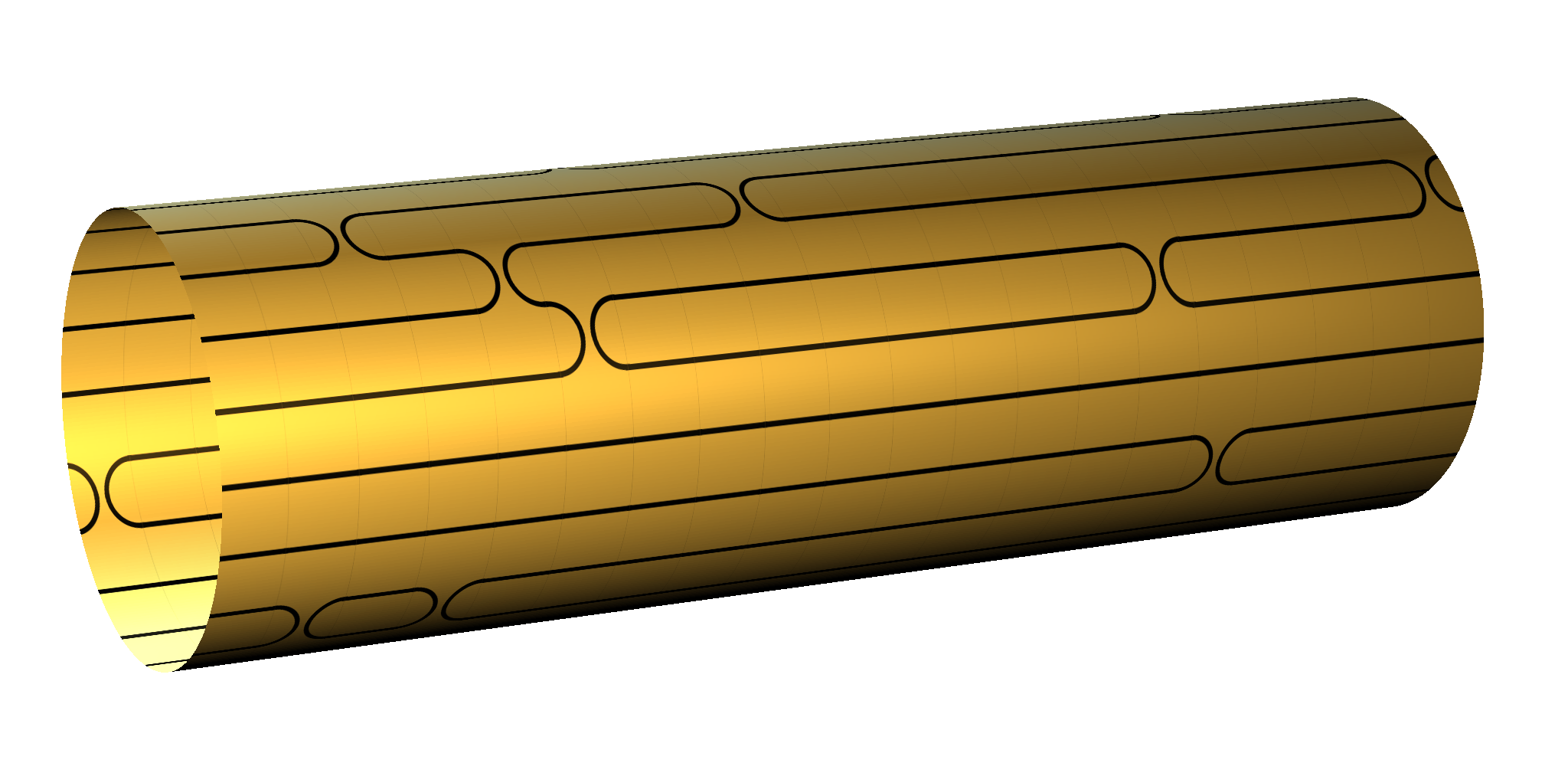}}}
\put(3.1,9.6){\circle*{0.1}}
\put(3.1,9.85){\circle*{0.1}}
\put(3.1,10.1){\circle*{0.1}}
\put(19.9,5){\circle*{0.1}}
\put(20.1,5.02){\circle*{0.1}}
\put(20.3,5.04){\circle*{0.1}}
\end{picture}
\caption{The connectivity pattern of endpoints in a semi-infinite arrangement of uniformly random i.i.d.\ Temperley-Lieb operator diagrams is invariant under the addition of another operator, hence has $\boldsymbol{\mu}_n$ as its distribution.}
\label{fig:tl-randomwalk-graphical}
\end{center}
\end{figure}

Yet another interpretation of $\boldsymbol{\mu}_n$ is as the ground state eigenvector associated with a certain quantum many-body system, the XXZ spin chain. The Hamiltonian of this spin chain is an operator acting on the space $V=(\C^2)^{\otimes 2n}$, and it has been shown that in the case of ``twisted'' periodic boundary conditions and when a parameter $\Delta$ of the chain, known as the anisotropy parameter, is set to the value $\Delta=-1/2$, the space $V$ will possess a subspace $U$ invariant under the action of the Hamiltonian, such that the restriction of the Hamiltonian to $U$ coincides (under an appropriate choice of basis) with the operator $H_n$ defined in \eqref{eq:def-h-inf-gen}; see \cite[Section 8]{mitra-etal} and \cite[Section 3.2.4]{zinn-justin}.

One additional way in which the probability vector $\boldsymbol{\mu}_n$ makes an appearance is in the study of  connectivity patterns associated with a different type of loop model known as the \textbf{fully packed loops} (FPLs). An FPL configuration of order $n$ is a subset of the edges of an $(n-1)\times (n-1)$ square lattice $[0,n-1]\times[0,n-1]$, to which are added  $2n$ of the $4n$ ``boundary'' edges connecting the square to the rest of the lattice $\Z^2$, by starting with the edge from $(0,0)$ to $(0,-1)$ and then taking alternating boundary edges as one goes around the boundary in a counter-clockwise direction (e.g., one would take edges incident to $(2,0), (4,0)$, etc.); these extra edges are referred to as \textbf{stubs}. The configuration is subject to the condition that any lattice vertex in the square is incident to exactly two of the configuration edges; see Fig.~\ref{fig:fpl}.

\begin{figure}
\begin{center}
\begin{tabular}{ccc}
\scalebox{0.6}{\includegraphics{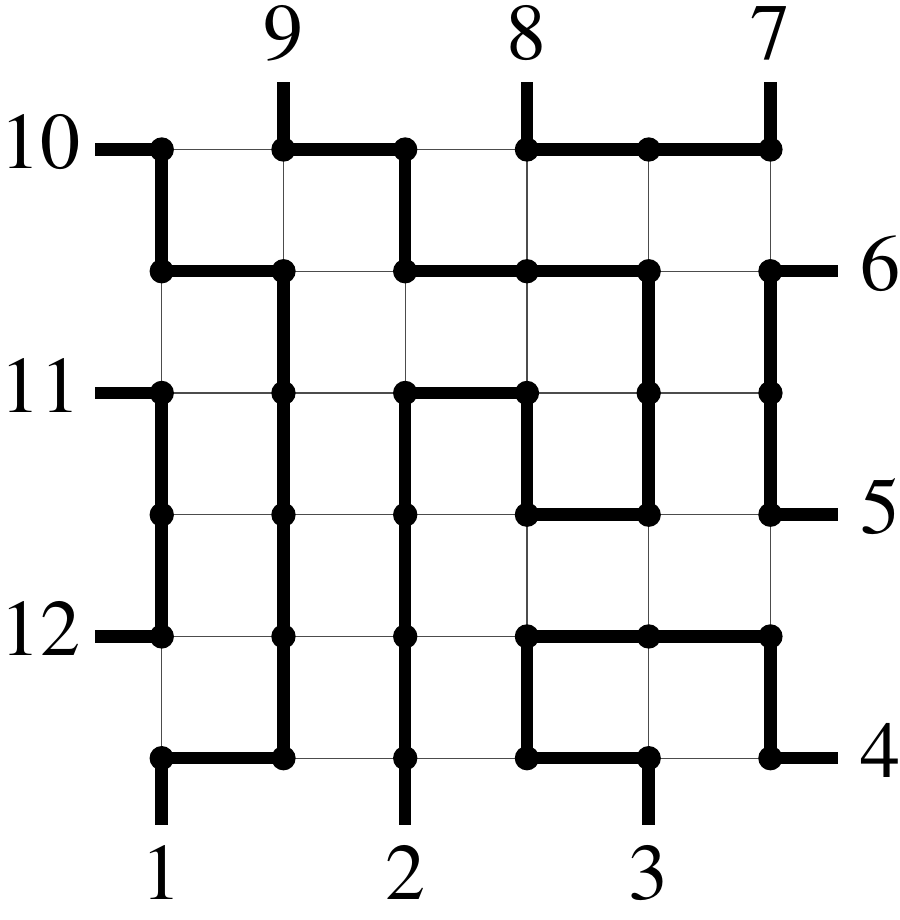}}
& &
\scalebox{0.65}{\includegraphics{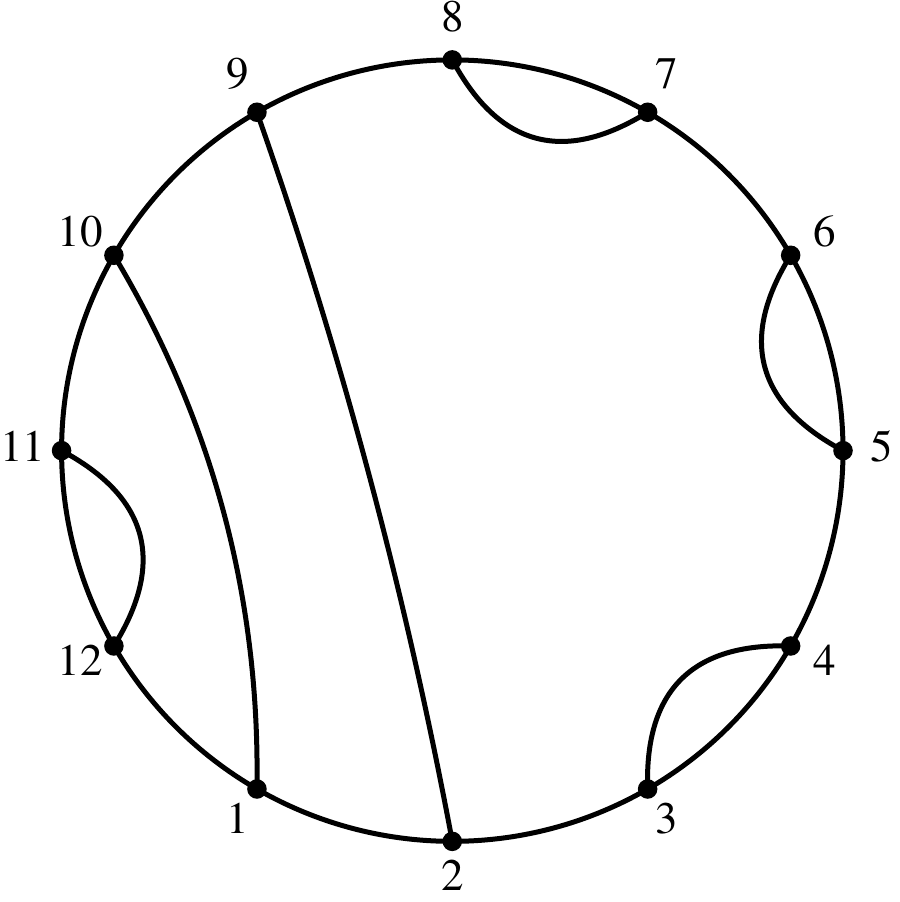}}
\end{tabular}
\caption{A fully packed loop configuration of order $6$ and its associated noncrossing matching.}
\label{fig:fpl}
\end{center}
\end{figure}

Let $\fpl_n$ denote the set of fully packed loop configurations of order $n$. A well-known bijection \cite{propp} shows that $\fpl_n$ is in correspondence with the set of \textbf{alternating sign matrices} (ASMs) of order $n$. These important combinatorial objects also have an interpretation as configurations of the \textbf{six-vertex model} (a.k.a.\ \textbf{square ice}) on an $n\times n$ lattice with prescribed boundary behavior known as the \textbf{domain wall boundary condition}. It was conjectured by Mills, Robbins and Rumsey \cite{mills-robbins-rumsey} and proved by Zeilberger \cite{zeilberger1} (see also \cite{bressoud}, \cite{kuperberg}) that the number $|\fpl_n|$ of ASMs of order $n$ is given by the famous sequence of numbers $1,2,7,42,429,\ldots$, defined by
$$ \asm(n) = \frac{1!4!7!\ldots (3n-2)!}{n!(n+1)!\ldots(2n-1)!}. $$
As the reader will see below, the function $\asm(n)$ will play a central role in our current investigation of connectivity patterns in loop percolation on a cylinder.

Label the $2n$ stubs around the square $[0,n-1]\times[0,n-1]$ by the numbers $1$ through $2n$, starting with the edge pointing down from $(0,0)$. From the definition of fully packed loop configurations we see that any such configuration induces a connectivity pattern on the stubs, which is a noncrossing matching in $\noncn{n}$, in an analogous manner to the way a loop percolation configuration on the semi-infinite cylinder does. For $\pi\in\noncn{n}$, denote by $A_n(\pi)$ the number of configurations in $\fpl_n$ whose connectivity pattern is equal to~$\pi$.

It was first observed by Batchelor, de Gier and Nienhuis \cite{batchelor-etal} that the coordinates of the vector $\boldsymbol{\mu}_n$ are related to the enumeration of alternating sign matrices. They conjectured the following result, which was later proved by Zinn-Justin and Di Francesco \cite{zinn-justin-di-francesco1, zinn-justin-di-francesco2}.

\begin{thm}[Di Francesco-Zinn-Justin]
\label{thm:sum-rules}
\begin{enumerate}
\item The numbers $\mu_\pi$, $(\pi\in\noncn{n})$ are all fractions of the form $\alpha_n(\pi)/\asm(n)$ where $\alpha_n(\pi)$ is an integer.
\item The minimal value of $\alpha_n(\pi)$ is $1$ and is attained for $\pi=\pi^n_{\textrm{min}}$, the ``minimal''\footnote{according to a certain partial ordering of noncrossing matchings that is discussed in Section~\ref{sec:wheel-polynomials}.} noncrossing matching consisting of $n$ nested arcs (Fig.~\ref{fig:minmax-matchings}(a)).
\item The maximal value of $\alpha_n(\pi)$ is $\asm(n-1)$ and is attained for $\pi=\pi^n_{\textrm{max}}$, the ``maximal'' noncrossing matching consisting of $n$ nearest-neighbor arcs (Fig.~\ref{fig:minmax-matchings}(b)).
\end{enumerate}
\end{thm}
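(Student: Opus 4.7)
The plan is to follow the quantum Knizhnik--Zamolodchikov (qKZ) approach of Di Francesco and Zinn-Justin. I would deform the model by introducing $2n$ spectral parameters $z_1, \ldots, z_{2n}$ and consider the inhomogeneous transfer matrix assembled from trigonometric $R$-matrices. By Yang--Baxter this defines a commuting family refining Theorem~\ref{thm:commuting-matrices}, whose Perron--Frobenius ground-state vector $\Psi_n = (\psi_\pi)_{\pi\in\noncn{n}}$ has components that are polynomials in the $z_i$; specializing $z_1 = \cdots = z_{2n} = 1$ recovers $\boldsymbol{\mu}_n$ up to normalization.

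First, I would translate the Yang--Baxter commutation into the qKZ exchange relations at the combinatorial point $q = e^{i\pi/3}$ on the components $\psi_\pi$. Together with explicit degree bounds inherited from the $R$-matrix construction, these relations force the $\psi_\pi$ to lie in the space of \textbf{wheel polynomials} and to satisfy a recursion under the specialization $z_{i+1} = q^2 z_i$ that reduces $n$ to $n-1$. Fixing the normalization so that $\psi_{\pi^n_{\min}}$ is a prescribed monomial pins down $\Psi_n$ uniquely, and makes each $\psi_\pi(1,\ldots,1)$ a nonnegative integer $\alpha_n(\pi)$.

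The heart of the argument is the sum rule: the total sum $Z_n(z_1, \ldots, z_{2n}) := \sum_{\pi\in\noncn{n}} \psi_\pi(z_1, \ldots, z_{2n})$ coincides, up to an explicit prefactor, with the partition function of the six-vertex model with domain-wall boundary condition. I would establish this by checking that $Z_n$ satisfies the same polynomial recursion and vanishing conditions as the Izergin--Korepin determinant (which determine it uniquely by a degree count) and that the boundary values match. Evaluating that determinant at the combinatorial point, as in Kuperberg's proof of the ASM conjecture, then yields $Z_n(1,\ldots,1) = \asm(n)$, so $\mu_\pi = \alpha_n(\pi)/\asm(n)$, proving part~1.

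Part~2 follows because the chosen normalization makes $\psi_{\pi^n_{\min}}$ a monomial that evaluates to $1$ at $z_i = 1$. For part~3, I would iterate the recursion $z_{i+1} = q^2 z_i$ to peel off the $n$ nearest-neighbor arcs of $\pi^n_{\max}$ one at a time; each reduction step collapses a pair of indices corresponding to one of the short arcs, reducing the system from size $n$ to size $n-1$, and at the terminal step identifies $\psi_{\pi^n_{\max}}^{(n)}(1,\ldots,1)$ with $Z_{n-1}(1,\ldots,1) = \asm(n-1)$. The main obstacle is the sum rule itself: deriving the Izergin--Korepin-type recursion for $\sum_\pi \psi_\pi$ from the qKZ equation is the one non-routine computation, requiring a careful use of the sum of the exchange relations together with a symmetrization argument; once that identification is in place, the remaining parts follow from direct inductions on the qKZ recursion.
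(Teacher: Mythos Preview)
Your plan is essentially the original Di~Francesco--Zinn-Justin argument from \cite{zinn-justin-di-francesco1}, and it is correct in outline. Note, however, that the present paper does not give a dedicated proof of Theorem~\ref{thm:sum-rules}; it is quoted as background. What the paper \emph{does} do, in Section~\ref{sec:wheel-polynomials}, is re-derive the key ingredients by a different route, the one from \cite{zinn-justin-di-francesco2}: rather than identifying $Z_n=\sum_\pi \psi_\pi$ with the Izergin--Korepin/six-vertex partition function and invoking Kuperberg's evaluation, the paper introduces the second basis $(\Phi_{\mathbf{a}})$ of integral wheel polynomials, proves the combinatorial expansion $\Phi_{\mathbf{a}}=\sum_{\pi\in\mathcal{L}(\mathbf{a})}\Psi_\pi$ for $\mathbf{a}\in\mathcal{B}_n$ (Theorem~\ref{thm:phi-a-psi-expansion}), sums over $\mathcal{B}_n$, and reduces the sum rule to the constant term identity~\eqref{eq:asm-const-term}. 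Likewise, for part~3 the paper does not ``peel off arcs'' via the specialization $z_{i+1}=q^2 z_i$; instead it takes $\mathbf{a}=(1,3,\ldots,2n-1)$ in Theorem~\ref{thm:phi-a-psi-expansion} (for which $\mathcal{L}(\mathbf{a})=\{\pi^n_{\max}\}$) and evaluates the resulting constant term directly as $\asm(n-1)$ using~\eqref{eq:asm-const-term} with $n$ replaced by $n-1$.

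Your six-vertex route is arguably cleaner conceptually and connects to a well-known determinant. The paper's $\Phi_{\mathbf{a}}$-basis route is more algebraic and is precisely the machinery that later generalizes to Theorems~\ref{thm:explicit-formulas-finite-n} and~\ref{thm:submatching-event-expansion}; that is its payoff. One small caveat on your sketch: you compute $\alpha_n(\pi^n_{\min})=1$ and $\alpha_n(\pi^n_{\max})=\asm(n-1)$, but parts~2 and~3 also assert that these are the \emph{minimum} and \emph{maximum} of $\alpha_n(\pi)$ over all $\pi$. That extremality does not fall out of the sum rule or the two evaluations alone; it needs a separate positivity/monotonicity argument along the partial order $\preceq$ (the paper does not supply one either, deferring to the cited references).
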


\begin{figure}
\begin{center}
\begin{tabular}{ccc}
\scalebox{0.6}{\includegraphics{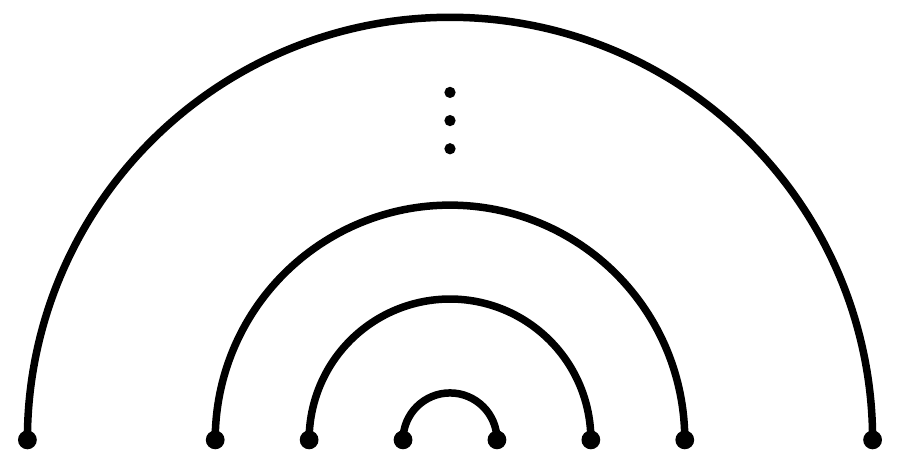}}
& &
\scalebox{0.6}{\includegraphics{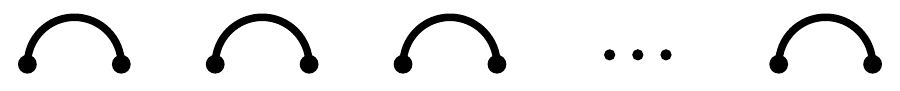}}
\\ (a) & & (b)
\end{tabular}
\caption{(a) The minimal matching $\pi^n_\textrm{min}$; (b) the maximal matching $\pi^n_\textrm{max}$.}
\label{fig:minmax-matchings}
\end{center}
\end{figure}

Shortly after this discovery, Razumov and Stroganov discovered a much more precise conjecture \cite{razumov-stroganov} about the connection between alternating sign matrices and the vector $\boldsymbol{\mu}_n$; it turns out that the correct thing to do is to look at the ASMs as fully packed loop configurations. Their conjecture, known for several years as the Razumov-Stroganov conjecture, was proved in 2010 by Cantini and Sportiello \cite{cantini-sportiello}.

\begin{thm}[Cantini-Sportiello-Razumov-Stroganov theorem]
For any $\pi\in\noncn{n}$ we have $\alpha_n(\pi)=A_n(\pi)$. That is,
$$
\mu_\pi = \frac{A_n(\pi)}{\asm(n)} \qquad (\pi\in\noncn{n}).
$$
\end{thm}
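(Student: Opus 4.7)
The plan is to exploit the characterization of $\boldsymbol{\mu}_n$ given in Theorem~\ref{thm:hamiltonian-eigenvector}: it is the unique probability vector in the left nullspace of $H_n$. Thus, setting $\widetilde{\boldsymbol{\mu}}_n = (A_n(\pi)/\asm(n))_{\pi \in \noncn{n}}$, it suffices to establish two properties: (i) $\widetilde{\boldsymbol{\mu}}_n$ is a probability vector, and (ii) $\widetilde{\boldsymbol{\mu}}_n H_n = 0$. Property (i) follows immediately from $\sum_\pi A_n(\pi) = |\fpl_n| = \asm(n)$, combining the FPL--ASM bijection with Zeilberger's theorem. Unpacking the definition of $H_n$ in \eqref{eq:def-h-inf-gen}, property (ii) amounts to the \emph{sum rule}
\begin{equation*}
2n \cdot A_n(\pi) = \sum_{k=1}^{2n} \sum_{\substack{\pi' \in \noncn{n} \\ e_k(\pi') = \pi}} A_n(\pi') \qquad (\pi \in \noncn{n}).
\end{equation*}

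The natural strategy is to prove the stronger per-$k$ identity
\begin{equation*}
A_n(\pi) = \sum_{\substack{\pi' \in \noncn{n} \\ e_k(\pi') = \pi}} A_n(\pi') \qquad (\pi \in \noncn{n},\ k \in [1,2n]),
\end{equation*}
whose sum over $k$ recovers the sum rule. The first main ingredient I would invoke is Wieland's gyration theorem: there is an explicit involution on $\fpl_n$ (toggling a prescribed local edge pattern around every alternate face of the grid) that cyclically rotates the labels of the associated connectivity pattern by one position. Consequently $A_n$ is rotation invariant: $A_n(r\pi) = A_n(\pi)$ for the cyclic rotation $r$. Since the Temperley-Lieb generators transform as $r\, e_k\, r^{-1} = e_{k+1}$, rotation invariance reduces the full family of per-$k$ identities to a single case, conveniently $k = 2n$.

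To handle this single case, I would look for a local bijective argument at the two boundary stubs labeled $2n$ and $1$. For $\pi$ containing the arc $2n \matched{\pi} 1$, the right-hand side counts FPLs whose connectivity pattern $\pi'$ differs from $\pi$ by replacing that arc with two arcs joining $\{2n,1\}$ to the rest; for $\pi$ in which $2n$ and $1$ belong to different arcs, the roles are reversed. In each case the goal is to construct a weight-preserving bijection between the two sides by an explicit local modification near the boundary face adjacent to stubs $2n$ and $1$---essentially a ``half-gyration'' toggling the edges of that single face---and to verify that its effect on global connectivities is exactly the action of $e_{2n}$.

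The main obstacle is precisely this last step. Wieland's gyration is a clean global involution, but the $e_k$-identity requires a \emph{local} move whose effect on the long-range connectivity of an FPL must nevertheless be controlled, even though that connectivity depends on edges arbitrarily far from the boundary face. The Cantini--Sportiello insight is that one can still track the effect of such a local move by applying Wieland gyration to a carefully chosen partial configuration and then performing a case analysis on the local edge pattern adjacent to the boundary. Carrying out this case analysis rigorously---verifying both that the local move always produces the predicted change in connectivity and that the resulting map is a bijection between the two families of FPLs---is the technical heart of the argument and the only genuinely difficult step; once it is in place, the proof is completed by combining it with Wieland's rotation invariance as outlined above.
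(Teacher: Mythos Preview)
The paper does not actually prove this theorem: it is stated as background and attributed to Cantini and Sportiello \cite{cantini-sportiello}, with no proof given. So there is no ``paper's own proof'' to compare against. Your proposal is, in broad outline, a sketch of the Cantini--Sportiello argument itself.

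As a sketch it is reasonable in its overall architecture. The reduction via Wieland's rotation invariance from the full family of per-$k$ identities to a single one is correct (your conjugation computation $r\,e_k\,r^{-1}=e_{k+1}$ together with $A_n(r\pi)=A_n(\pi)$ does the job). The identification of the remaining task---a bijection on FPLs whose effect on the connectivity pattern is exactly $e_k$---is also the right target.

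Where the proposal is incomplete is precisely where you yourself flag it: the ``half-gyration toggling the edges of that single face'' description undersells the actual Cantini--Sportiello mechanism. A toggle at one boundary face does not in general implement $e_k$ on the global connectivity; what they do instead is perform a full half-gyration (over all faces of one parity), observe that this rotates the pattern \emph{and} acts by some $e_j$ where $j$ depends on a specific local boundary feature of the individual FPL, and then show that refining the FPL count according to that boundary feature yields exactly the fibers of $e_k$. This is more than a single-face local move plus case analysis---it requires controlling how the global half-gyration interacts with the boundary refinement. Your last paragraph gestures toward this, but as written the proposal does not contain the key idea that makes the bijection work; it identifies the difficulty without resolving it. That is the genuine gap: not a wrong strategy, but the absence of the central combinatorial lemma that the strategy depends on.
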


Probabilistically, this means that the vector $\boldsymbol{\mu}_n$ is realized as the distribution of the connectivity pattern of a \emph{uniformly random} FPL configuration of order $n$. Note that this is a finite probability space, whereas the Markov chain realization would require a potentially unbounded amount of randomness to generate a sample from $\boldsymbol{\mu}_n$.

Cantini and Sportiello's proof of the conjecture of Razumov and Stroganov is highly nontrivial and involves subtle combinatorial and linear algebraic arguments. Note that while the two (\emph{a posteriori} equivalent) definitions of $\boldsymbol{\mu}_n$ as the distribution of connectivity patterns in cylindrical loop percolation and uniformly random fully packed loop configurations are superficially similar, the two models are quite different. In particular, the cylindrical model has an obvious symmetry under rotations of the cylinder by an angle $2\pi/2n$, which induces a symmetry under rotation on $\boldsymbol{\mu}_n$. The fact that the distribution of the connectivity pattern of random fully packed loops---which are defined on a square, not circular, geometry---also has the same symmetry, is far from obvious, and its earlier proof by Wieland \cite{wieland} played an important role in Cantini and Sportiello's analysis.

In this subsection we surveyed several settings in which the probability vector $\boldsymbol{\mu}_n$ appears: as the distribution of the connectivity pattern of cylindrical loop percolation; as the stationary distribution of the Temperley-Lieb random walk (or, equivalently, the distribution of the connectivity pattern of a semi-infinite arrangement of connection diagrams associated with independent, uniformly random Temperley-Lieb operators); as the ground state of the XXZ spin chain; and as the distribution of the connectivity pattern of uniformly random fully packed loop configurations of order $n$. To conclude this discussion, we note that one of the minor results of this paper is an additional characterization of the connectivity probabilities $(\mu_\pi)_{\pi\in\noncn{n}}$ as coefficients in a certain natural linear-algebraic expansion; see Theorem~\ref{thm:lin-alg-expansion} in Subsection~\ref{sec:wheel-polynomials-background}.

\subsection{Loop percolation on a cylinder: connectivity events}

\label{sec:loop-perc-cylinder}

Having described the origins of the investigations into the random noncrossing matching $\pistarn$ and its distribution $\boldsymbol{\mu}_n$, we are ready to discuss the problem of computing explicitly the probabilities of various events. Our main interest will be with submatching events of the type $\{ \pi_0 \submatching \pistarn \}$, where for $\pi_0\in\noncn{k}$ and $\pi\in\noncn{n}$ ($n\ge k$) the notation $\pi_0 \submatching \pi$ means (as defined earlier when $\pi$ is an infinite matching) that $\pi_0$ is a submatching of $\pi$, i.e., that $\pi_{\raisebox{2pt}{\big|} [1,2k]} \equiv \pi_0$.

It was observed starting with numerical work of Mitra et al.\ \cite{mitra-etal}, Zuber \cite{zuber} and Wilson (unpublished work, cited in \cite{zuber}) that the probabilities of certain events had nice formulas as rational functions in $n$. For example, one has empirically the relations
\begin{align}
\prob\left(
\raisebox{-8pt}{\scalebox{0.1}{\includegraphics{event12}}}
\submatching \pistarn
\right) &= \frac{3}{2} \cdot \frac{n^2+1}{4n^2-1}, \hspace{115.0pt} (n\ge 1),
\label{eq:three-eights-finite-n} \\[5pt]
\prob\left(
\raisebox{-6pt}{\scalebox{0.2}{\includegraphics{event12-34}}}
\submatching \pistarn
\right) &= \frac18 \cdot \frac{97 n^6 + 82 n^4 - 107 n^2 - 792}{(4n^2-1)^2(4n^2-9)}
 \hspace{20.0pt} (n\ge 2),
 \label{eq:identity97}
\\
\prob\left(
\raisebox{-6pt}{\scalebox{0.2}{\includegraphics{event14-23}}}
\submatching \pistarn
\right) &= \frac1{16} \cdot \frac{59n^6 + 299n^4 +866 n^2 + 576}{(4n^2-1)^2(4n^2-9)}
 \hspace{10.0pt} (n\ge 2),
\end{align}

\vspace{-12.0pt}
\begin{align} 
& \prob  \left(
\raisebox{-6pt}{\scalebox{0.25}{\includegraphics{event12-34-56}}}
\submatching \pistarn
\right) \nonumber \\ &= 
\frac{\scriptstyle 1}{\scriptstyle 512}\cdot
\frac{\scriptscriptstyle 214093n^{12}-980692n^{10}-584436n^8-1887916n^6+1361443n^4 -17432892n^2-316353600}{\scriptstyle  (4n^2-1)^3(4n^2-9)^2(4n^2-25)}
\  (n\ge 3),
\label{eq:event12-34-56-prob}
\end{align}
and several other such formulas, which can be discovered by a bit of experimentation after programming the linear equations \eqref{eq:razumov-stroganov-linear-system} into a computer algebra system such as \texttt{Maple} or \texttt{Mathematica}.\footnote{Unfortunately the size of the system is the Catalan number $\cat(n)$ which grows exponentially with $n$, making it impractical to compute $\boldsymbol{\mu}_n$ for values of $n$ much greater than $n=9$. Zuber \cite{zuber} managed the computation up to $n=11$ by using rotational symmetry to reduce the order of the system.} The discovery of additional conjectural relations of this type is only difficult insofar as it taxes one's patience, programming skill and computational resources, but,  as Zuber remarks at the end of \cite{zuber}, ``\textit{More conjectural expressions have been collected for other types of configurations \ldots\ but this seems a gratuitous game in the absence of a guiding principle.}'' It therefore appeared sensible to wait for more theoretical developments before proceeding with the ``gratuitous game.'' Indeed, the first progress (and so far, to our knowledge, the only progress) on this front was made by Fonseca and Zinn-Justin \cite{fonseca-zinn-justin}, who, building on a theoretical framework developed earlier by Zinn-Justin and Di Francesco \cite{zinn-justin-di-francesco1, zinn-justin-di-francesco2} (see also \cite{difran-zj-zuber}) managed to prove explicit formulas for two classes of events.

\begin{thm}[Fonseca-Zinn-Justin \cite{fonseca-zinn-justin}]
\label{thm:fonseca-zinn-justin}
For each $k\ge 1$, Denote by $\anticluster_k^{(n)}$ the ``anti-cluster'' event that no two of the numbers $1,\ldots,k$ are matched under $\pistarn$. Denote by $B_k^{(n)}$ the event $\left\{ \pi^k_\textrm{min} \submatching \pistarn\right\}$ (the submatching event associated with the minimal matching $\pi^k_\textrm{min}$; see Fig.~\ref{fig:minmax-matchings}). Define a function
\begin{equation}
\label{eq:complicated-product-def}
R_k(n)
= \begin{cases}
\displaystyle \frac{\prod_{j=1}^{(k+1)/2} \prod_{m=j}^{2j-2} (n^2-m^2)}{\prod_{j=0}^{(k-3)/2} (4n^2-(2j+1)^2)^{(k-1)/2-j}}
& \textrm{$k$ odd}, \\[15pt]
\displaystyle \frac{\prod_{j=1}^{k/2} \prod_{m=j}^{2j-1} (n^2-m^2)}{\prod_{j=0}^{k/2-1} (4n^2-(2j+1)^2)^{k/2-j}}
& \textrm{$k$ even}.
\end{cases}
\end{equation}
Then we have
\begin{align}
\prob( \anticluster_k^{(n)} ) &= \frac{1}{\asm(k)} \frac{R_k(n)}{R_k(k)},
\label{eq:anti-cluster} \\
\prob( B_k^{(n)} ) &= \frac{1}{\asm(n)} \sum_{1\le a_1 < a_2 < \ldots < a_{n-k}} \det \left( \binom{j+k}{a_i-j+ k } \right)_{i,j=1}^{n-k}.
\label{eq:k-nested-arcs}
\end{align}
In particular, we have $\prob(\anticluster_2^{(n)})=\frac52 \frac{n^2-1}{4n^2-1} \vphantom{\begin{array}{c}a\\a\end{array}}$, which implies equation \eqref{eq:three-eights-finite-n} since the event in that identity is the complement of $\anticluster_2^{(n)}$.
\end{thm}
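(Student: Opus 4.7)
My plan is to exploit the multivariate polynomial representation of $\boldsymbol{\mu}_n$. Each component $\mu_\pi$ is obtained by specializing $z_1=\cdots=z_{2n}=1$ in a polynomial $\Psi_\pi(z_1,\ldots,z_{2n})$, and the vector $(\Psi_\pi)_{\pi\in\noncn{n}}$ is the unique polynomial solution (subject to ``wheel'' vanishing conditions) of the quantum Knizhnik-Zamolodchikov equation at $q = e^{2\pi i/3}$. The Di Francesco-Zinn-Justin framework supplies ``sum rules'' of the form $\sum_{\pi\in S}\Psi_\pi(z_1,\ldots,z_{2n})$ that, for natural classes $S$ of matchings, factor into Izergin-Korepin-type partition functions admitting explicit product evaluations at the combinatorial point $z_i=1$. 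The two probabilities in the statement are precisely such sums: $\prob(\anticluster_k^{(n)}) = \sum_{\pi\in S_k}\mu_\pi$, where $S_k\subset\noncn{n}$ is the set of matchings in which no two of $1,\ldots,k$ are paired, and, via the Cantini-Sportiello-Razumov-Stroganov theorem,
\[
\prob(B_k^{(n)}) = \frac{1}{\asm(n)}\sum_{\pi\in\noncn{n}:\,\pi^k_{\textrm{min}}\submatching\pi}A_n(\pi),
\]
the right-hand sum counting FPL configurations of order $n$ whose first $2k$ boundary stubs are connected in the prescribed nested pattern.

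For the anti-cluster formula \eqref{eq:anti-cluster}, the strategy is to show that $\sum_{\pi\in S_k}\Psi_\pi(z_1,\ldots,z_{2n})$ factors---using the wheel vanishing and induction on $n$---into a symmetric polynomial in the first $k$ variables (essentially the ground-state normalization of a rank-$k$ subsystem, which specializes at $z_i=1$ to a quantity involving $\asm(k)R_k(k)$) times a partition-function-type factor whose specialization yields $R_k(n)$. The explicit shape of $R_k(n)$ in \eqref{eq:complicated-product-def} matches the form of known Izergin-Korepin determinant evaluations at roots of unity, which is strong indication that the targeted factorization works out and that the ratio $R_k(n)/R_k(k)$ must appear. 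For the nested-arc formula \eqref{eq:k-nested-arcs}, the approach is combinatorial on the FPL side: a nested boundary connection of depth $k$ should force a ``frozen'' corner region in the FPL configuration, so that the remaining free edges are encoded precisely by an $(n-k)$-tuple of non-intersecting lattice paths whose endpoints are indexed by an increasing sequence $1\le a_1<\cdots<a_{n-k}$. The Lindström-Gessel-Viennot lemma then converts this enumeration into the determinant $\det\binom{j+k}{a_i-j+k}$, with each binomial counting monotone lattice paths between a forced endpoint in position $j$ and a free endpoint in position $a_i$.

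The main obstacle in both arguments is the key reduction step. For the anti-cluster formula, proving the polynomial factorization requires carefully tracking how the wheel vanishing conditions propagate through the boundary recursions relating the $\Psi_\pi$ for different values of $n$, which is the technically delicate algebraic core of the Fonseca-Zinn-Justin argument. For the nested-arc formula, establishing the freezing of the FPL corner and the resulting bijection with non-intersecting lattice paths is geometrically delicate; it appears to require input from the enumerative FPL machinery, in particular rotational/gyration arguments along the lines of Wieland's theorem used by Cantini-Sportiello. Once either reduction is in place, the final closed-form expressions follow from standard evaluations---Izergin-Korepin determinant specializations in the first case and a routine LGV computation in the second.
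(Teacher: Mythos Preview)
This theorem is cited from \cite{fonseca-zinn-justin} and is not proved in the paper; however, the paper does sketch the Fonseca--Zinn-Justin approach in the paragraph following the statement, and your proposal diverges from it in both parts.

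For \eqref{eq:k-nested-arcs} your plan goes through the FPL realization via the Cantini--Sportiello theorem, arguing that a depth-$k$ nested boundary freezes a corner region and reduces the count to an LGV determinant. This is not how the result was obtained (the Fonseca--Zinn-Justin paper predates Cantini--Sportiello), and more to the point the reduction you describe is not a known or straightforward fact: a bijection from FPLs with a prescribed nested connectivity to families of nonintersecting lattice paths with the specific endpoints producing $\binom{j+k}{a_i-j+k}$ is exactly the kind of FPL/TSSCPP correspondence that remains open in general. The actual route, as the paper indicates, stays entirely on the qKZ/wheel-polynomial side: one expands the relevant $\sum_\pi \Psi_\pi$ in the integral basis $\Phi_{\mathbf{a}}$ (using the $p$-nesting compatibility of the change-of-basis, cf.\ \eqref{eq:p-nesting-first}--\eqref{eq:p-nesting-second}), evaluates at $\mathbf{1}$, and uses the constant-term formula \eqref{eq:phi-a-eval1-constantterm} for $\phi_{\mathbf{a}}$, which yields exactly a sum over increasing sequences of determinants of binomials. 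The binomials arise from expanding $\prod(u_k-u_j)(1+u_k+u_ju_k)$, not from lattice-path counting on the FPL picture.

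For \eqref{eq:anti-cluster} your outline posits a direct factorization of $\sum_{\pi\in S_k}\Psi_\pi(z_1,\ldots,z_{2n})$ into an order-$k$ piece times a partition function, with the product form of $R_k(n)$ emerging from an Izergin--Korepin specialization. The paper's description of the Fonseca--Zinn-Justin argument is different: it proceeds, as for \eqref{eq:k-nested-arcs}, by first reaching a sum of binomial determinants, and the closed form \eqref{eq:anti-cluster} is then extracted using a family of Pfaffian evaluations due to Krattenthaler \cite{krattenthaler}. So the product structure of $R_k(n)$ comes from a Pfaffian identity applied after the determinantal sum is in hand, not from a polynomial factorization at the level of $\Psi_\pi$'s. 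Your factorization heuristic is plausible-sounding but is not the established mechanism, and you have not indicated what identity would produce it.
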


Note that the approach of Fonseca and Zinn-Justin to the relation \eqref{eq:anti-cluster} started out, similarly to \eqref{eq:k-nested-arcs}, by expressing the probabilities of these events as sums of determinants over a certain family of matrices with binomial coefficients. However, in the case of \eqref{eq:anti-cluster} a family of Pfaffian evaluations due to Krattenthaler \cite{krattenthaler} made it possible to evaluate the sum in closed form. As the authors of \cite{fonseca-zinn-justin} point out, both the formulas \eqref{eq:anti-cluster} and \eqref{eq:k-nested-arcs} have an interesting interpretation in terms of the enumeration of certain families of totally symmetric self-complementary plane partitions.

\subsection{Loop percolation on a cylinder: new results and the rationality phenomenon}

\label{sec:cylinder-new-results}

The identities \eqref{eq:three-eights-finite-n}--\eqref{eq:event12-34-56-prob} generalize in a straightforward way to a conjecture on the form of the dependence on $n$ of probabilities of submatching events. 

\begin{conj}[The rationality phenomenon for submatching events; finite $n$ case]
\label{conj:rationality-finite-n}
If $\pi_0\in\noncn{k}$, the probability of the submatching event $\left\{ \pi_0 \submatching \pistarn \right\}$ has the form of a rational function in $n$, specifically
\begin{equation} \label{eq:rationality-finite-n}
\prob\left( \pi_0 \submatching \pistarn \right) = \frac{Q_{\pi_0}(n)}{\prod_{j=1}^k (4n^2-j^2)^{k+1-j}} \qquad (n\ge k), 
\end{equation}
where $Q_{\pi_0}(n)$ is an even polynomial of degree $k(k+1)$ with dyadic rational coefficients. $Q_{\pi_0}$ can be computed by polynomial interpolation---see Algorithm~B in Appendix~A.
\end{conj}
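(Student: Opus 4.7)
The plan is to combine the combinatorial identity
$$\prob\!\left(\pi_0 \submatching \pistarn\right) = \frac{1}{\asm(n)} \sum_{\pi \in \noncn{n},\, \pi_0 \submatching \pi} A_n(\pi),$$
which follows from the Cantini--Sportiello--Razumov--Stroganov theorem, with the algebraic framework of wheel polynomials arising from the quantum Knizhnik--Zamolodchikov (qKZ) equation, and then to extract the claimed rational structure in $n$ via a careful analysis of specializations. The goal reduces to showing that $N_{\pi_0}(n) := \sum_{\pi_0 \submatching \pi} A_n(\pi)$ factors as $\asm(n) \cdot Q_{\pi_0}(n)/\prod_{j=1}^k (4n^2-j^2)^{k+1-j}$ with $Q_{\pi_0}$ having the claimed shape.

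The first substantive step is to pass to the inhomogeneous loop model: consider the ground-state vector $\Psi_n(z_1,\ldots,z_{2n}) = (\Psi_\pi)_{\pi\in\noncn{n}}$, whose components are multivariate polynomials solving the level-$1$ qKZ equation at crossing parameter $q=e^{i\pi/3}$. These satisfy the characteristic ``wheel'' vanishing conditions and, at the combinatorial point $z_1=\cdots=z_{2n}=1$, recover the FPL counts $\Psi_\pi(1,\ldots,1)=A_n(\pi)$. The normalization $Z_n(\mathbf{z})=\sum_\pi \Psi_\pi(\mathbf{z})$ is proportional to the six-vertex domain-wall partition function, with $Z_n(1,\ldots,1)=\asm(n)$.

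Next, I would seek to establish a factorization of the submatching-sum of wheel polynomials of the schematic form
$$\sum_{\pi_0 \submatching \pi} \Psi_\pi(z_1,\ldots,z_{2n}) \;=\; \Phi_{\pi_0}(z_1,\ldots,z_{2k}) \cdot \Xi_{n,k}(z_{2k+1},\ldots,z_{2n}) \;+\; (\textrm{terms of lower wheel-degree in the outer block}),$$
where $\Phi_{\pi_0}$ is a ``reduced'' wheel polynomial depending only on the first $2k$ variables and $\Xi_{n,k}$ is an explicit polynomial that absorbs the $n$-dependence of the outer variables. Specializing $z_i=1$ throughout and dividing by $\asm(n)$ should then yield an expression for $\prob(\pi_0\submatching\pistarn)$ as a constant term (or contour integral) of a generating function in the first $2k$ variables only, times a ratio of explicit products in $n$. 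The denominator $\prod_{j=1}^k(4n^2-j^2)^{k+1-j}$ should arise naturally from the standard Izergin--Korepin-type product formulas for partial specializations of $Z_n$---exactly the formulas that underlie Fonseca and Zinn-Justin's Theorem~\ref{thm:fonseca-zinn-justin}.

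The third and hardest step is to show that the constant term produced by this construction is actually an even polynomial in $n$ of degree exactly $k(k+1)$ with dyadic rational coefficients. Evenness follows from the $n\leftrightarrow -n$ symmetry built into the wheel polynomials and their normalizations, and dyadic rationality follows from the fact that the only primes entering the algebraic manipulations at $q=e^{i\pi/3}$ are $2$ and $3$ (with the $3$'s cancelling against $\asm$); the degree bound would emerge from a careful degree count in the integrand. The main obstacle, however, is that the very statement that the constant term evaluates to a \emph{polynomial} in $n$ requires proving a family of conjectural constant term identities generalizing the identity of Di~Francesco and Zinn-Justin referenced in the abstract. Absent these identities, this strategy can only \emph{reduce} Conjecture~\ref{conj:rationality-finite-n} to a more explicit algebraic conjecture about Taylor coefficients of wheel-polynomial generating functions, rather than prove it outright---which is precisely the state of affairs the paper sets out to achieve.
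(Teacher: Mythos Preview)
The statement you are attempting to prove is Conjecture~\ref{conj:rationality-finite-n}, which the paper explicitly does \emph{not} prove; it only reduces it (modulo the case $n=k$) to the algebraic Conjecture~\ref{conj:perturbed-asm-iden} via Theorem~\ref{thm:explicit-formulas-finite-n}. You recognize this in your final paragraph, so your proposal is best read as a sketch of a reduction strategy rather than a proof. In that light, your overall shape---pass to the qKZ wheel polynomials, express the submatching sum algebraically, specialize, and land on a constant-term expression whose polynomiality in $n$ is the remaining open problem---is indeed the paper's shape.

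However, several of your intermediate steps diverge from what the paper actually does, and some would not work as stated. First, you begin from the Cantini--Sportiello--Razumov--Stroganov theorem and the FPL counts $A_n(\pi)$; the paper avoids this entirely, working instead with the qKZ $\mathbf{1}$-evaluations $\psi_\pi=\Psi_\pi(\mathbf{1})$ and the sum rule $\sum_\pi\psi_\pi=\asm(n)$ (Theorems~\ref{thm:psi-mu-related} and~\ref{thm:sum-rule}). Second, your proposed ``factorization'' $\sum_{\pi_0\submatching\pi}\Psi_\pi\approx\Phi_{\pi_0}(z_1,\ldots,z_{2k})\cdot\Xi_{n,k}(z_{2k+1},\ldots,z_{2n})$ is not what happens: the paper's key identity (Theorem~\ref{thm:submatching-event-expansion}) is an exact expansion of $\sum_{\pi\in\mathcal{E}_n(\pi_0,\mathbf{b})}\Psi_\pi$ as a linear combination of integral wheel polynomials $\Phi_{(1,1+a_1,\ldots,1+a_k,b_{k+2},\ldots,b_n)}$ with coefficients $\tilde{C}_{\pi_0,\mathbf{a}}$ drawn from the inverse change-of-basis matrix $\tilde{\mathbf{C}}_k$. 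No separation of variables into inner and outer blocks occurs at the wheel-polynomial level; the product structure only emerges after the $\mathbf{1}$-evaluation, via the constant-term formula for $\phi_{\mathbf{a}}$, and it manifests as the factor $\prod_{j=k+2}^n(1+z_j)$ in \eqref{eq:submatching-polycoeff-formula}. Third, your assertion that the denominator $\prod_{j=1}^k(4n^2-j^2)^{k+1-j}$ ``should arise naturally from Izergin--Korepin-type product formulas'' is not substantiated in the paper: the appearance of that specific denominator is precisely the content of Conjecture~\ref{conj:perturbed-asm-iden}, and the paper offers no mechanism by which it would emerge from partition-function specializations. Likewise, the evenness, dyadic rationality, and degree claims for $Q_{\pi_0}$ are left entirely conjectural; your heuristic arguments for them (symmetry $n\leftrightarrow -n$, primes $2$ and $3$ only) are plausible but are not developed in the paper and would require substantial additional work.
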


In pursuit of an approach that would lead to a proof of Conjecture~\ref{conj:rationality-finite-n}, one of our goals has been to find a ``guiding principle'' of the type alluded to by Zuber, that would reveal the underlying structure behind the empirical phenomena described above. In Section~\ref{sec:wheel-polynomials} we will prove the following result, which can be thought of as one version of such a principle, and is our main result.

\begin{thm}[Explicit formula for submatching event probabilities]
\label{thm:explicit-formulas-finite-n}
For any $\pi_0 \in \noncn{k}$, there exists a multivariate polynomial $F_{\pi_0}(w_1,\ldots,w_k)$, computable by an explicit algorithm (Algorithm~E in Appendix~A), with the following properties:
\begin{enumerate}
\item $F_{\pi_0}$ has integer coefficients.
\item All the monomials in $F_{\pi_0}(w_1,\ldots,w_k)$ are of the form $\prod_{j=1}^k w_j^{2j-a_j}$ 
where $1\le a_1<\ldots<a_k$ are integers satisfying $a_j\le 2j-1$ for $1\le j\le k$.\footnote{Note that the number of different sequences $(a_1,\ldots,a_k)$ satisfying these conditions is $\cat(k)$, the $k$th Catalan number, and indeed the sequence $(a_1,\ldots, a_k)$ can be thought of as encoding a noncrossing matching in $\noncn{k}$, a fact that will have a role to play later on---see Section~\ref{sec:wheel-polynomials}.}
\item The probability $ \prob\left(\pi_0 \submatching \pistarn \right)$ is given for any $n\ge k+1$ by
\begin{align}
\nonumber
& \prob\left( \pi_0 \submatching \pistarn \right) = \frac{1}{\asm(n)} [z_1^0 z_2^2 z_3^4 \ldots z_n^{2n-2}] \Bigg( F_{\pi_0}(z_2,\ldots,z_{k+1})
\\ & \hspace{110pt} \times 
 \prod_{1\le i<j\le n} (z_j-z_i)(1+z_j+ z_i z_j) \prod_{j=k+2}^n (1+z_j) \Bigg),
 \label{eq:submatching-polycoeff-formula}
\end{align}
where $[z_1^{m_1} \ldots z_n^{m_n}]g(z_1,\ldots,z_n)$ denotes the coefficient of the monomial $z_1^{m_1} \ldots z_n^{m_n}$ in a polynomial $g(z_1,\ldots,z_n)$.
\end{enumerate}

\end{thm}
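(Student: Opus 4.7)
The natural approach is to leverage the wheel-polynomial / quantum Knizhnik--Zamolodchikov framework alluded to in the introduction: at the \emph{combinatorial point} of the qKZ system one obtains a basis $\{\Psi_\pi(z_1,\ldots,z_{2n})\}_{\pi\in\noncn{n}}$ of polynomials (with controlled bidegree and vanishing on so-called wheels) whose specializations recover the ground-state entries $\mu_\pi$. The first step of the plan is to apply the linear-algebraic characterization of $\boldsymbol{\mu}_n$ (Theorem~\ref{thm:lin-alg-expansion}) to write
\[
\prob\left(\pi_0 \submatching \pistarn\right) \;=\; \sum_{\substack{\pi \in \noncn{n} \\ \pi_0 \submatching \pi}} \mu_\pi
\]
as a single coefficient in the expansion of $\sum_{\pi \supseteq \pi_0} \Psi_\pi$ against the wheel basis. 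All of the analytic structure must come from understanding this restricted sum.

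The second step is to identify each ingredient of \eqref{eq:submatching-polycoeff-formula} structurally. The factor $\prod_{1\le i<j\le n}(z_j-z_i)(1+z_j+z_iz_j)$ is, up to the normalization used in defining $\Psi_\pi$, the Izergin--Korepin-type partition function that governs wheel polynomials at the combinatorial point, and the coefficient extraction $[z_1^0 z_2^2 \cdots z_n^{2n-2}]$ is dual to the staircase evaluation. The extra product $\prod_{j=k+2}^n (1+z_j)$ should encode the summation over extensions of $\pi_0$: inserting a \emph{little arc} between positions $j,j+1$ corresponds at the combinatorial point to the specialization $z_j=-1$, and so the sum $\sum_{\pi \supseteq \pi_0} \Psi_\pi$ is expected to factor as the ``truncated'' wheel polynomial of $\pi_0$ times $\prod_{j=k+2}^n(1+z_j)$, with the remaining Vandermonde-wheel contribution restored by the global product. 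This is essentially a \emph{sector reduction} $2n \to 2k$ for the qKZ solution, compatible with Cherednik-type exchange relations.

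Third, with the sector reduction in place I would \emph{define} $F_{\pi_0}(w_1,\ldots,w_k)$ as the residual polynomial after separating off the universal Vandermonde-wheel factor and the extension factor, and verify its three asserted properties. The monomial support $\prod_j w_j^{2j-a_j}$ with $1\le a_1<\cdots<a_k$ and $a_j\le 2j-1$ should follow from the standard degree bound $\deg_{z_j}\Psi_\pi\le j-1$ combined with the wheel vanishing conditions, which together force the support to be indexed by $\noncn{k}$ (whence the Catalan count mentioned in the footnote). Integrality of the coefficients should then follow from the well-known FPL/ASM combinatorial interpretation of the expansion coefficients of $\Psi_\pi$. Algorithm~E is simply a recursive extraction procedure of these integers from the known data for $\Psi_{\pi_0}$.

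The main obstacle I expect is the second step: establishing rigorously that the sum $\sum_{\pi \supseteq \pi_0} \Psi_\pi$ factors as $\Psi_{\pi_0}$ (with $2k$ arguments) times $\prod_{j=k+2}^n(1+z_j)$, modulo the universal factors absorbed into the outer product. This requires a delicate inductive argument, ``closing up'' the last pair of strands into a little arc at each step and invoking both the wheel vanishing and the qKZ exchange relations to propagate the identity through the remaining variables. Once this structural lemma is proved, the normalization by $\asm(n)$ is fixed by summing \eqref{eq:submatching-polycoeff-formula} over $\pi_0\in\noncn{n}$ and matching against the sum rule $\sum_\pi \mu_\pi = 1$, and the rest of the statement is a matter of bookkeeping degrees and supports.
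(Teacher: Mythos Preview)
Your high-level framing is right: the proof does go through the wheel-polynomial space and the qKZ basis $\{\Psi_\pi\}$, and the factor $\prod_{j=k+2}^n(1+z_j)$ does ultimately encode the sum over extensions of $\pi_0$. But the mechanism you propose for the second step is not the one that works, and I do not see how to make it work as stated.

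You anticipate that $\sum_{\pi\supseteq\pi_0}\Psi_\pi$ factors, ``modulo universal factors,'' as $\Psi_{\pi_0}$ (in $2k$ variables) times $\prod_{j=k+2}^n(1+z_j)$, and that $F_{\pi_0}$ is then the residual piece of $\Psi_{\pi_0}$. No such factorization holds at the level of the $2n$-variable wheel polynomials, and $F_{\pi_0}$ is \emph{not} built from $\Psi_{\pi_0}$. What the paper actually does is introduce a second family of wheel polynomials $\Phi_{\mathbf{a}}$ (defined by contour integrals, indexed by sequences $\mathbf{a}\in\mathcal{A}_n$ that generalize Dyck-path increase positions) whose $\mathbf{1}$-evaluations have the constant-term form
\[
\phi_{\mathbf{a}} \;=\; [u_1^{a_1-1}\cdots u_n^{a_n-1}]\prod_{i<j}(u_j-u_i)(1+u_j+u_iu_j).
\]
The key technical result (Theorem~\ref{thm:submatching-event-expansion}) is an expansion
\[
\sum_{\pi\in\mathcal{E}_n(\pi_0,\mathbf{b})}\Psi_\pi \;=\; \sum_{\mathbf{a}\in\noncn{k}^+}\tilde C_{\pi_0,\mathbf{a}}\,\Phi_{(1,1+a_1,\ldots,1+a_k,b_{k+2},\ldots,b_n)},
\]
where $\tilde C_{\pi_0,\mathbf{a}}$ are the entries of the \emph{inverse} of the change-of-basis matrix from $\{\Psi_\pi\}$ to $\{\Phi_\pi\}$ in order $k$. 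This is proved by a double induction comparing the recurrences \eqref{eq:qkz-eval-recursion} and \eqref{eq:phi-a-recurrence} for the $\sigma$-evaluations of both sides; the ``non-interaction'' of the $\mathbf{a}$ and $\mathbf{b}$ blocks under these recurrences is what makes the argument go through. Only after taking the $\mathbf{1}$-evaluation and summing over $\mathbf{b}$ does the product $\prod_{j=k+2}^n(1+z_j)$ appear, coming from the Cartesian-product structure of the index set for $\mathbf{b}$ inside the constant-term formula for $\phi_{\mathbf{a}}$. The polynomial $F_{\pi_0}$ is then \emph{defined} as $\sum_{\mathbf{a}}\tilde C_{\pi_0,\mathbf{a}}\prod_j w_j^{2j-a_j}$; its integrality and monomial support follow because $\mathbf{C}_k$ is unitriangular over $\Z$ with rows indexed by $\noncn{k}^+$, not from degree bounds on $\Psi_{\pi_0}$ or from FPL combinatorics.

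So the missing idea is the second basis $\Phi_{\mathbf{a}}$ and the change-of-basis matrix $\tilde{\mathbf{C}}_k$; without them there is no candidate for $F_{\pi_0}$ and no route to the coefficient formula.
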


The formula \eqref{eq:submatching-polycoeff-formula} can be recast in two equivalent forms which some readers may find more helpful: first, as a constant term identity
\begin{align*}
\nonumber
& \prob\left( \pi_0 \submatching \pistarn \right) = \frac{1}{\asm(n)} \CT_{z_1,\ldots,z_n} \Bigg( F_{\pi_0}(z_2,\ldots,z_{k+1})
\!\!\!\!  
 \\ &  \hspace{130.0pt} \left. \times \prod_{1\le i<j\le n} \!\! \!(z_j-z_i)(1+z_j+ z_i z_j)\frac{\prod_{j=k+2}^n (1+z_j)}{
\prod_{j=1}^n z_j^{2j-2}} \right),
\end{align*}
where $\CT_{z_1,\ldots,z_n} g(z_1,\ldots,z_n)$ denotes the constant term of a Laurent polynomial $g(z_1,\ldots,z_n)$; and second, as a multi-dimensional complex contour integral
\begin{align*}
\nonumber
&\prob\left( \pi_0 \submatching \pistarn \right) = \frac{1}{\asm(n)} \oint\hspace{-2.0pt}\ldots\hspace{-2.0pt}\oint \Bigg( F_{\pi_0}(z_2,\ldots,z_{k+1})
 \\ & 
 \hspace{60.0pt}
 \times
 \!\!\!
\! \prod_{1\le i<j\le n} \!\!(z_j-z_i)(1+z_j+ z_i z_j)
 \frac{\prod_{j=k+2}^n (1+z_j)}{ \prod_{j=1}^n z_j^{2j-1}}
 \Bigg) \prod_{j=1}^n \frac{dz_j}{2\pi i},
\end{align*}
where the contour for each of the $z_j$'s is a circle of arbitrary radius around~$0$.

Table~\ref{table:submatching-polynomials} lists 
a few of the simplest submatching events and the multivariate polynomials associated to them. Note that the first case of the empty matching listed in the table (for which the probability of the submatching event is $1$) corresponds to the known identity
\begin{equation}
\label{eq:asm-const-term}
\asm(n) = [z_1^0 z_2^2 \ldots z_n^{2n-2}] \left( \prod_{1\le i<j\le n} (z_j-z_i)(1+z_j+z_i z_j) \prod_{j=2}^n (1+z_j) \right).
\end{equation}
This identity is already a difficult result. It was proved in \cite{zinn-justin-di-francesco2} by Zinn-Justin and Di Francesco, relying conditionally on a conjectural anti-symmetrization identity that they discovered which itself was proved shortly afterwards by Zeilberger \cite{zeilberger2}. Their proof also relies on a highly nontrivial Pfaffian evaluation due to Andrews \cite{andrews} (see also \cite{andrews-burge, krattenthaler}).  The following conjecture can be thought of as a natural generalization or ``deformation'' of \eqref{eq:asm-const-term}.

\begin{table}[h]
\begin{center}
\begin{tabular}{ccc}
\raisebox{40pt}{
\begin{tabular}{c|c|c}
k & $\pi_0$ & $\frac{F_{\pi_0}(w_1,\ldots,w_k)}{w_1\ldots w_{2k}}$ \\[3pt]
\hline & & \\[-1ex]
0 & empty
& 1 \\[8pt] 
\raisebox{8pt}{1} & \scalebox{0.14}{\includegraphics{event12}} & 
\raisebox{8pt}{1}
\\[10pt]
\raisebox{11pt}{2} & \scalebox{0.3}{\includegraphics{event12-34}} & 
\raisebox{11pt}{1}
\\[10pt]
\raisebox{11pt}{2} & \scalebox{0.3}{\includegraphics{event14-23}} & 
\raisebox{12pt}{$w_1$}
\end{tabular}
}
& &
\begin{tabular}{c|c|c}
k & $\pi_0$ & $\frac{F_{\pi_0}(w_1,\ldots,w_k)}{w_1\ldots w_{2k}}$ \\[3pt]
\hline & & \\
\raisebox{8pt}{3} &
\scalebox{0.3}{\includegraphics{event12-34-56}} & 
\raisebox{8pt}{1}
\\[10pt]
\raisebox{8pt}{3} &
\scalebox{0.3}{\includegraphics{event12-36-45}} & 
\raisebox{8pt}{$w_2-w_1 w_2$}
\\[10pt]
\raisebox{8pt}{3} &
\scalebox{0.3}{\includegraphics{event14-23-56}} & 
\raisebox{8pt}{$w_1$}
\\[10pt]
\raisebox{8pt}{3} &
\scalebox{0.3}{\includegraphics{event16-23-45}} & 
\raisebox{8pt}{$w_1 w_2$}
\\[10pt]
\raisebox{8pt}{3} &
\scalebox{0.3}{\includegraphics{event16-25-34}} & 
\raisebox{14pt}{$w_1 w_2^2$}
\end{tabular}
\end{tabular}

\vspace{10pt}
\caption{The polynomials $F_{\pi_0}$ associated with some submatching events, after factoring out the product $w_1\ldots w_{2k}$ which always divides $F_{\pi_0}$. (The first case of the ``empty'' matching refers to the trivial matching of order $k=0$, in which case the submatching event has probability $1$.)
}
\label{table:submatching-polynomials}
\end{center}
\end{table}

\vbox{
\begin{conj}[The algebraic rationality phenomenon] \label{conj:perturbed-asm-iden}
Let $k\ge1$, and let $1\le a_1<\ldots<a_k$ be integers satisfying $a_j\le 2j-1$ for all $j$. There exists a rational function of the form $R(n)=P(n)/\prod_{j=1}^k (4n^2-j^2)^{k+1-j}$, where $P(n)$ is an even polynomial of degree at most $k(k+1)/2$ with dyadic rational coefficients, such that for all $n\ge k+1$ we have that
\begin{align}
\nonumber
[ z_1^0 z_2^2 \ldots z_n^{2n-2} ]& \left(\prod_{j=1}^k z_{j+1}^{2j-a_j} \prod_{1\le i<j\le n} (z_j-z_i)(1+z_j+z_i z_j)
\prod_{j=k+2}^n (1+z_j) \right)
\\ & \ \ \ = \asm(n) R(n). \label{eq:perturbed-asm-iden}
\end{align}
\end{conj}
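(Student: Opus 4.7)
The plan is to prove the identity by combining the multi-dimensional contour-integral form of the coefficient extraction with an induction on $n$ that exploits the ``wheel'' structure of the factor $\prod_{1\le i<j\le n}(z_j-z_i)(1+z_j+z_iz_j)$, which is precisely the vanishing condition satisfied by the wheel polynomials featured elsewhere in this paper. Setting
$$ I_n(a_1,\ldots,a_k) = \oints \frac{\prod_{j=1}^k z_{j+1}^{2j-a_j}\prod_{1\le i<j\le n}(z_j-z_i)(1+z_j+z_iz_j)\prod_{j=k+2}^n(1+z_j)}{\prod_{j=1}^n z_j^{2j-1}}\prod_{j=1}^n\frac{dz_j}{2\pi i}, $$
the claim is that $I_n(a_1,\ldots,a_k) = \asm(n) R(n)$ with the prescribed rational form. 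The first step is to verify by a direct degree count that when $I_n$ is written as a rational function in $n$ (treating the $a_i$'s as fixed), its denominator can be at worst $\prod_{j=1}^k(4n^2-j^2)^{k+1-j}$; here it is crucial that the exponent $a_j\le 2j-1$ bounds the number of times each variable $z_{j+1}$ can ``collide'' with $z_1,\ldots,z_j$ under the Vandermonde-like antisymmetrizer, cutting off the order of the poles in $n$ at each stage.

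Next I would deflate $I_n$ to $I_{n-1}$ by moving the $z_n$ contour to infinity and collecting residues at $z_n=0$, at $z_n=-1$, and at the wheel loci $1+z_n+z_iz_n=0$, i.e.\ $z_n=-(1+z_i)^{-1}$. At each wheel locus the factor $1+z_i+z_iz_n$ vanishes too (by symmetry of the cubic $1+x+xy$), and the integrand factorises into a piece independent of $z_n$ times a piece that, after partial-fraction rearrangement, reproduces the integrand for $I_{n-1}$ with an explicit rational-in-$n$ multiplier. Carrying this out for all residues should yield a closed recursion
$$ I_n(a_1,\ldots,a_k) = \sum_{(a'_1,\ldots,a'_k)} c_{n,(a),(a')}\, I_{n-1}(a'_1,\ldots,a'_k), $$
where the summation ranges over Catalan-many shifted indices (which is natural in view of the footnote in Theorem~\ref{thm:explicit-formulas-finite-n}), and the $c$'s are rational in $n$ with denominators dividing $\prod_{j}(4n^2-j^2)$. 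The base case $n=k+1$ can be computed directly from the integral since the integrand then has only $k+1$ variables, and the predicted value $\asm(k+1)R(k+1)$ serves as initial data.

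Granting the recursion, the final step is to propagate the conjectured form through induction. Multiplying the recursion through by the denominator $\prod_{j}(4n^2-j^2)^{k+1-j}$ converts (4.2) into a polynomial identity in $n$ of bounded degree $k(k+1)/2$, which it then suffices to verify at that many consecutive integer values of $n$; this reduces the conjecture for a given $(a_1,\ldots,a_k)$ to a finite check fed by the base case and the inductive step. The dyadic-rational nature of the coefficients of $P(n)$ would follow from tracking denominators through the residue step (which introduces only factors of $2$ from the symmetry $(1+z_i)(1+z_n)$ at residue computations when $z_i,z_n\to -1$ coalesce).

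The main obstacle is the residue calculation of the recursion: while the pure wheel-polynomial integrand used in the Zinn-Justin--Di~Francesco proof of \eqref{eq:asm-const-term} admits a clean residue analysis because of its full $S_n$-antisymmetrisation after multiplication by a Vandermonde complement, the extra monomial weight $\prod_{j=1}^k z_{j+1}^{2j-a_j}$ destroys this symmetry in the first $k+1$ variables and breaks the antisymmetrisation trick of Zeilberger on which their argument ultimately rests. I expect that overcoming this will require either a refined partial antisymmetrisation over $\{z_{k+2},\ldots,z_n\}$ combined with an explicit expansion on the remaining variables in the wheel-polynomial basis of Section~\ref{sec:wheel-polynomials}, or, more structurally, reinterpreting the monomial weight as the specialisation of a distinguished component of a qKZ solution at $q=e^{\pi i/3}$, so that the recursion is inherited from the qKZ exchange relations rather than produced by hand.
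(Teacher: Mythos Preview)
The statement you are attempting to prove is Conjecture~\ref{conj:perturbed-asm-iden}, which is explicitly left open in the paper; there is no proof in the paper to compare against. The paper's contribution is precisely to \emph{reduce} Conjecture~\ref{conj:rationality-finite-n} to this algebraic statement via Theorem~\ref{thm:explicit-formulas-finite-n}, not to prove it. So any correct argument you give would be new.

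That said, your outline has a concrete gap well before the obstacle you yourself flag. In the integrand
\[
\frac{\prod_{j=1}^k z_{j+1}^{2j-a_j}\prod_{1\le i<j\le n}(z_j-z_i)(1+z_j+z_iz_j)\prod_{j=k+2}^n(1+z_j)}{\prod_{j=1}^n z_j^{2j-1}},
\]
the only pole in $z_n$ is at $z_n=0$: the factors $(1+z_n)$ and $(1+z_n+z_iz_n)$ sit in the \emph{numerator}, not the denominator, so there are no residues to collect at $z_n=-1$ or at the ``wheel loci'' $z_n=-(1+z_i)^{-1}$. (Your parenthetical claim that $1+z_i+z_iz_n$ also vanishes there is in any case false: substituting $z_n=-1/(1+z_i)$ gives $(1+z_i+z_i^2)/(1+z_i)$, not zero.) Thus the proposed recursion $I_n\to I_{n-1}$ via residue deflation does not get off the ground as stated; extracting the residue at $z_n=0$ simply returns the original coefficient-extraction problem in one fewer variable but with a more complicated polynomial prefactor, and it is not clear that this stays within the family of integrals $I_{n-1}(a_1',\ldots,a_k')$ you posit.

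Your closing diagnosis is accurate: the genuine difficulty is that the monomial $\prod_j z_{j+1}^{2j-a_j}$ breaks the antisymmetrisation that underlies the Di~Francesco--Zinn-Justin/Zeilberger proof of \eqref{eq:asm-const-term}, and neither of the two routes you suggest (partial antisymmetrisation in $z_{k+2},\ldots,z_n$, or reinterpretation via qKZ exchange relations) is known to go through. As things stand the proposal is a plan rather than a proof, and the first substantive step (the residue recursion) would need to be reformulated from scratch.
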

}

It is unclear whether the specific assumption about the sequence $a_1,\ldots,a_k$ that enters the form of the monomial $\prod_{j=1}^k z_{j+1}^{2j-a_j}$ is necessary to imply the rationality of $R(n)$ in \eqref{eq:perturbed-asm-iden}. The main assumption, which may be sufficient to imply the result, is that
we are looking at a Taylor coefficient that is ``a fixed distance away'' from the coefficient in \eqref{eq:asm-const-term}.

An immediate consequence of Theorem~\ref{thm:explicit-formulas-finite-n} is that Conjecture~\ref{conj:perturbed-asm-iden} essentially implies Conjecture~\ref{conj:rationality-finite-n}, although a small gap remains regarding the validity of \eqref{eq:rationality-finite-n} in the case $n=k$.

\begin{thm}
\label{thm:conj-implies-weak-conj}
Conjecture~\ref{conj:perturbed-asm-iden} implies a weaker version of Conjecture~\ref{conj:rationality-finite-n}
in which \eqref{eq:rationality-finite-n} is only claimed to hold for $n\ge k+1$.
\end{thm}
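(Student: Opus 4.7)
The strategy is a direct reduction: apply Theorem~\ref{thm:explicit-formulas-finite-n} to rewrite the probability as a $1/\asm(n)$-weighted sum of Taylor coefficients, then invoke Conjecture~\ref{conj:perturbed-asm-iden} monomial-by-monomial to convert each coefficient into the asserted rational form.

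First, fix $\pi_0\in\noncn{k}$ and $n\ge k+1$. By Theorem~\ref{thm:explicit-formulas-finite-n}(2), the polynomial $F_{\pi_0}(w_1,\ldots,w_k)$ can be written as a finite integer linear combination
\[
F_{\pi_0}(w_1,\ldots,w_k) = \sum_{\vec{a}} c_{\vec{a}} \prod_{j=1}^k w_j^{2j-a_j},
\]
where $\vec{a}=(a_1,\ldots,a_k)$ ranges over integer sequences with $1\le a_1<\ldots<a_k$ and $a_j\le 2j-1$, and each $c_{\vec{a}}\in\Z$ by Theorem~\ref{thm:explicit-formulas-finite-n}(1). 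Substituting $w_j\mapsto z_{j+1}$ in \eqref{eq:submatching-polycoeff-formula} and using linearity of the coefficient-extraction operator yields
\[
\prob\!\left(\pi_0\submatching\pistarn\right) = \frac{1}{\asm(n)}\sum_{\vec{a}} c_{\vec{a}} \cdot [z_1^0 z_2^2\ldots z_n^{2n-2}]\!\left(\prod_{j=1}^k z_{j+1}^{2j-a_j}\!\! \prod_{1\le i<j\le n}\!(z_j-z_i)(1+z_j+z_iz_j)\!\!\prod_{j=k+2}^n\!\!(1+z_j)\right).
\]
Crucially, each monomial $\prod_{j=1}^k z_{j+1}^{2j-a_j}$ appearing here matches exactly the monomial shape in the hypothesis of Conjecture~\ref{conj:perturbed-asm-iden}, so the conjecture is directly applicable.

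Second, assuming Conjecture~\ref{conj:perturbed-asm-iden}, for each allowed $\vec{a}$ there is a rational function $R_{\vec{a}}(n) = P_{\vec{a}}(n)/\prod_{j=1}^k(4n^2-j^2)^{k+1-j}$ with $P_{\vec{a}}$ an even polynomial with dyadic rational coefficients, such that the corresponding Taylor coefficient equals $\asm(n)\,R_{\vec{a}}(n)$ for all $n\ge k+1$. Substituting this into the display above, the factor $\asm(n)$ cancels and we obtain
\[
\prob\!\left(\pi_0\submatching\pistarn\right) = \sum_{\vec{a}} c_{\vec{a}}\, R_{\vec{a}}(n) = \frac{\sum_{\vec{a}} c_{\vec{a}}\, P_{\vec{a}}(n)}{\prod_{j=1}^k (4n^2-j^2)^{k+1-j}}.
\]
Setting $Q_{\pi_0}(n):=\sum_{\vec{a}} c_{\vec{a}}\,P_{\vec{a}}(n)$, this is a finite $\Z$-linear combination of even polynomials with dyadic rational coefficients, hence itself an even polynomial with dyadic rational coefficients. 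This is exactly the form asserted by \eqref{eq:rationality-finite-n}, valid for $n\ge k+1$, which is the claimed weakening of Conjecture~\ref{conj:rationality-finite-n} (the only missing case being $n=k$, where Theorem~\ref{thm:explicit-formulas-finite-n} does not apply).

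Because this is essentially a linearity-plus-substitution argument, there is no real obstacle; everything is syntactic once Theorem~\ref{thm:explicit-formulas-finite-n} and Conjecture~\ref{conj:perturbed-asm-iden} are in hand. The only mild subtlety worth flagging in the write-up is the verification that the monomial index set produced by Theorem~\ref{thm:explicit-formulas-finite-n}(2) exactly matches the index set for which Conjecture~\ref{conj:perturbed-asm-iden} is formulated, so that the conjecture can legitimately be applied term-by-term. The reason the degree refinement in Conjecture~\ref{conj:rationality-finite-n} is not automatically recovered, and the reason $n=k$ remains outside the scope, both trace back to the corresponding limitations of Theorem~\ref{thm:explicit-formulas-finite-n} and Conjecture~\ref{conj:perturbed-asm-iden}, and account for the word ``weaker'' in the statement.
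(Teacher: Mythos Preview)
Your proposal is correct and matches the paper's approach exactly: the paper states Theorem~\ref{thm:conj-implies-weak-conj} as ``an immediate consequence of Theorem~\ref{thm:explicit-formulas-finite-n}'' without further elaboration, and what you have written is precisely the spelled-out version of that immediate consequence---expand $F_{\pi_0}$ into its constituent monomials, apply Conjecture~\ref{conj:perturbed-asm-iden} term by term, and take the $\Z$-linear combination. Your remarks on why the monomial index sets match, why the $n=k$ case falls outside the scope, and why the exact degree assertion of Conjecture~\ref{conj:rationality-finite-n} is not recovered are all apt and more explicit than anything the paper itself provides.
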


From the above discussion we see that, while we have been unable to get a complete understanding of the probabilities of submatching events, we have reduced the problem to the essentially algebraic question of understanding the form of the dependence of $n$ of the polynomial coefficients appearing on the left-hand side of \eqref{eq:perturbed-asm-iden}. Moreover, the algorithm for finding the polynomial $F_{\pi_0}$ associated with a submatching event, which will be explained in Section~\ref{sec:wheel-polynomials}, stems from a fairly detailed theoretical understanding of the model, and therefore already helps eliminate much of the mystery surrounding identities such as \eqref{eq:three-eights-finite-n}--\eqref{eq:event12-34-56-prob}.

It should be noted as well that the relations \eqref{eq:asm-const-term} and \eqref{eq:perturbed-asm-iden} belong to a large family of identities known as \textbf{constant term identities}. The study of such identities became popular following the discovery by Dyson of 
the identity
\begin{equation}
\label{eq:dyson}
\CT_{z_1,\ldots,z_n} \left( \prod_{1\le i\neq j \le n} \left(1-\frac{z_j}{z_i}\right)^{a_j} \right)
= \frac{(a_1+\ldots+a_n)!}{a_1! \ldots a_n!} \qquad (a_1,\ldots,a_n\ge 0),
\end{equation}
which became known as the Dyson conjecture \cite{dyson}. (Dyson's conjecture was proved by Gunson \cite{gunson} and Wilson \cite{wilson}, and a particularly simple proof was later found by Good \cite{good}.) Research on such identities has been an active area that involves a mixture of techniques from algebraic combinatorics and the theory of special functions. In particular, Sills and Zeilberger \cite{sills-zeilberger} proved a deformation of Dyson's identity in which a coefficient near the constant term is shown to equal the multinomial coefficient on the right-hand side of \eqref{eq:dyson} times a rational function in the exponents $a_1,\ldots,a_n$; this is quite similar in spirit to the claim of Conjecture~\ref{conj:perturbed-asm-iden}.

To conclude this section, we show how the relation \eqref{eq:identity97} can be derived in a simple manner from the results of Fonseca and Zinn-Justin, and in addition derive another identity concerning a finite connectivity event which is not a submatching event.

\begin{thm}
\label{thm:ninety-seven-finiten}
The identity \eqref{eq:identity97} holds for $n\ge 2$,\footnote{This proves part of Conjecture 9 in Zuber's paper \cite{zuber}.} and we have the additional identity
\begin{equation} \label{eq:prov-event12-45}
\prob\left(
\raisebox{-6pt}{\scalebox{0.2}{\includegraphics{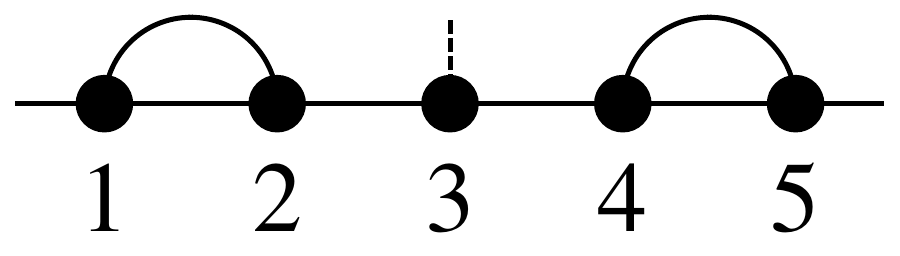}}}
\submatching \pistarn
\right) = \frac{15}{16} \cdot \frac{(n^2-4)(9n^4+38n^2-63)}{(4n^2-1)^2(4n^2-9)}
 \hspace{20.0pt} (n\ge 3)
\end{equation}
(we use the notation for submatching events for convenience, but note that this refers to the event that $\pistarn$ matches the two pairs $1\matched{\pistarn}2$ and $4\matched{\pistarn}5$).
\end{thm}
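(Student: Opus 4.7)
The plan is to derive both identities from Fonseca--Zinn-Justin's anti-cluster formula~\eqref{eq:anti-cluster} via inclusion--exclusion on arc events, using rotation invariance of $\pistarn$ together with parity and noncrossing constraints within a small window of consecutive indices. Throughout I write $E_{ij}$ for the event $\{\pistarn(i)=j\}$. The organizing principle is that inside such a short window the permissible arcs are highly restricted: parity demands opposite-parity endpoints, and noncrossing forces long arcs to drag in shorter nested ones.

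For~\eqref{eq:identity97}, I work with the window $\{1,2,3,4\}$. The only internal arc events are $E_{12}, E_{23}, E_{34}, E_{14}$; any pair among these sharing a vertex is impossible, so the only compatible pairs are $(E_{12},E_{34})$ and $(E_{14},E_{23})$. Moreover $E_{14}$ forces $E_{23}$ by noncrossing (the chord $1$--$4$ separates $\{2,3\}$ from everything else on the cylinder), so $\prob(E_{14})=\prob(E_{14}\cap E_{23})$. Inclusion--exclusion applied to the complement of $\anticluster_4^{(n)}$, namely $E_{12}\cup E_{23}\cup E_{34}\cup E_{14}$, then collapses, using rotational invariance $\prob(E_{12})=\prob(E_{23})=\prob(E_{34})$, to
\[
\prob(E_{12}\cap E_{34}) = 3\prob(E_{12}) + \prob(\anticluster_4^{(n)}) - 1,
\]
the $\prob(E_{14})$ and $\prob(E_{14}\cap E_{23})$ contributions canceling. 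Substituting $\prob(E_{12})=1-\prob(\anticluster_2^{(n)}) = 3(n^2+1)/(2(4n^2-1))$ and Fonseca--Zinn-Justin's formula for $\prob(\anticluster_4^{(n)})$ from Theorem~\ref{thm:fonseca-zinn-justin} then yields~\eqref{eq:identity97} after a routine polynomial simplification.

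For~\eqref{eq:prov-event12-45} I extend the argument one window further. The decisive observation is the set-theoretic identity
\[
\{\text{no two of }1,2,3\text{ matched}\}\cap\{\text{no two of }3,4,5\text{ matched}\} \;=\; \anticluster_5^{(n)},
\]
because being in the intersection forbids each of $E_{12},E_{23},E_{34},E_{45}$, while the only other internal arcs within $\{1,\ldots,5\}$ permitted by parity, namely $E_{14}$ and $E_{25}$, each entail one of the already excluded arcs by noncrossing. Combining this with inclusion--exclusion on the union of the two shifted $3$-point anti-cluster events, expanding $(E_{12}\cup E_{23})\cap(E_{34}\cup E_{45})$ as a disjoint union (noting the impossibility of $E_{23}\cap E_{34}$), and using rotation to equate $\prob(E_{23}\cap E_{45})$ with $\prob(E_{12}\cap E_{34})$, I obtain
\[
\prob(E_{12}\cap E_{45}) = 1 - 2\prob(\anticluster_3^{(n)}) + \prob(\anticluster_5^{(n)}) - 2\prob(E_{12}\cap E_{34}),
\]
which via the first identity and $\prob(\anticluster_3^{(n)}) = 1-2\prob(E_{12})$ telescopes to $\prob(\anticluster_3^{(n)}) + \prob(\anticluster_5^{(n)}) - 2\prob(\anticluster_4^{(n)})$. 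Plugging in Fonseca--Zinn-Justin's closed forms for these three anti-cluster probabilities and simplifying produces~\eqref{eq:prov-event12-45}. The main non-routine step is the clean identification of the intersection of two shifted $3$-point anti-cluster events with $\anticluster_5^{(n)}$ via noncrossing; the remaining work is careful bookkeeping of the impossible pairwise and triple intersections and an otherwise straightforward algebraic simplification.
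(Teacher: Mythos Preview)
Your argument is correct and follows essentially the paper's own approach: both identities come from inclusion--exclusion on the anti-cluster events of Theorem~\ref{thm:fonseca-zinn-justin}, with your derivation of~\eqref{eq:identity97} matching the paper's almost verbatim, and your route to~\eqref{eq:prov-event12-45} (via two shifted copies of $\anticluster_3$) being a minor variant of the paper's direct inclusion--exclusion on $\anticluster_5^{(n)}$---both routes yield the same telescoped expression $\prob(\anticluster_3^{(n)})+\prob(\anticluster_5^{(n)})-2\prob(\anticluster_4^{(n)})$.

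One caution about the ``routine'' final substitution. Applying~\eqref{eq:anti-cluster}--\eqref{eq:complicated-product-def} for $k=5$ actually gives
\[
\prob(\anticluster_5^{(n)}) \;=\; \frac{11}{16}\cdot\frac{(n^2-4)(n^2-9)(n^2-16)}{(4n^2-1)^2(4n^2-9)},
\]
with factor $(n^2-16)$ rather than the $(n^2-1)$ printed in the paper (sanity check: $\anticluster_5^{(4)}=\emptyset$, since five points cannot all be matched into $\{6,7,8\}$). Plugging the correct expression into your telescoped formula yields
\[
\prob(E_{12}\cap E_{45}) \;=\; \frac{15}{16}\cdot\frac{(n^2-4)(9n^4+27n^2+36)}{(4n^2-1)^2(4n^2-9)},
\]
which agrees with the stated~\eqref{eq:prov-event12-45} at $n=3$ and in the limit $n\to\infty$ (both give $135/1024$) but differs for $n\ge 4$. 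So your method is sound; the discrepancy is a typo in the paper's displayed $\anticluster_5$ formula that propagates to~\eqref{eq:prov-event12-45}.
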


\begin{proof}
By \eqref{eq:anti-cluster}, we know that
\begin{equation} \label{eq:anti-cluster4}
\prob(\anticluster^{(n)}_4) = \frac{33}{8} \cdot \frac{(n^2-1)(n^2-4)(n^2-9)}{(4n^2-1)^2 (4n^2-9)}.
\end{equation}
On the other hand, by the inclusion-exclusion principle, we have that
\begin{align*}
\prob(\anticluster^{(n)}_4) &= 
1-
\prob\left(\pistarn \in 
\raisebox{-8pt}{\scalebox{0.1}{\includegraphics{event12}}}
\right)
-
\prob\left(\pistarn \in 
\raisebox{-8pt}{\scalebox{0.1}{\includegraphics{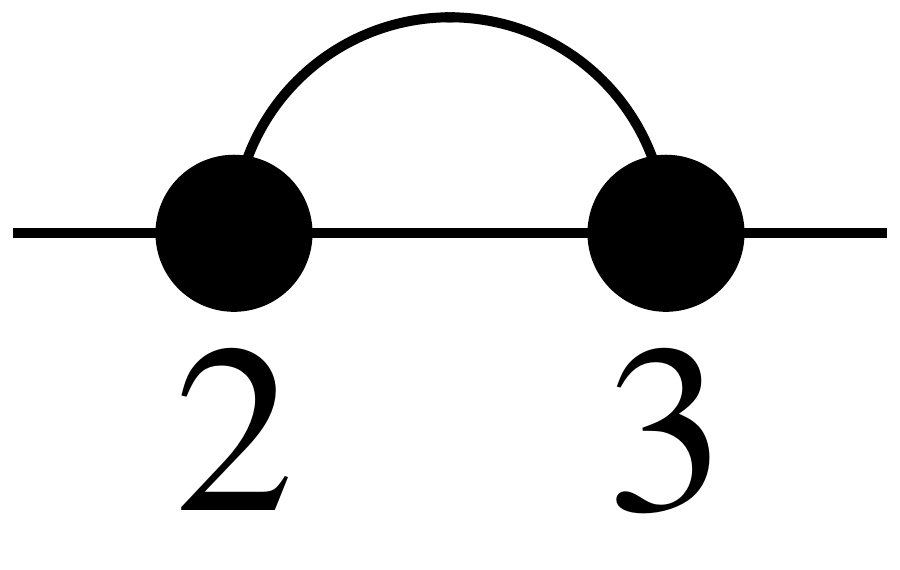}}}
\right)
-
\prob\left(\pistarn \in 
\raisebox{-8pt}{\scalebox{0.1}{\includegraphics{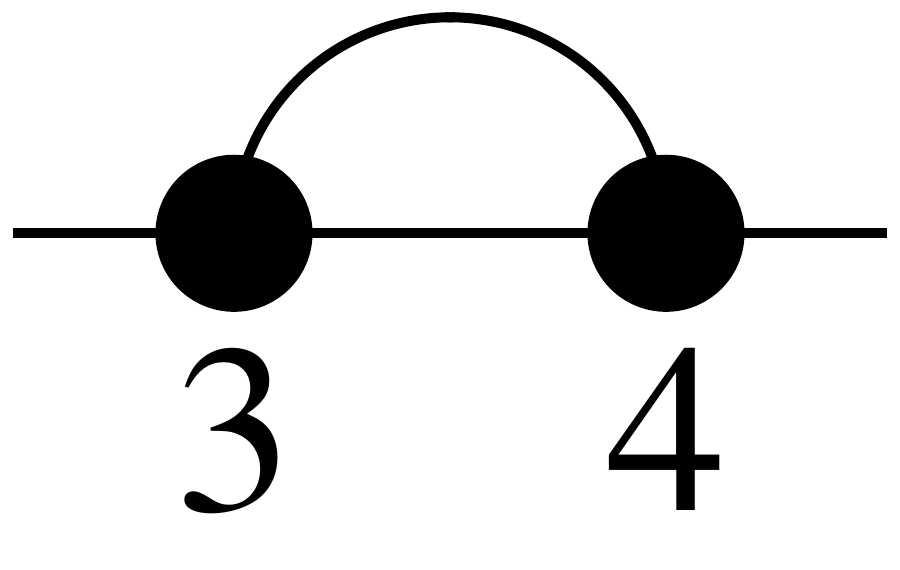}}}
\right)
\\ & \ \ \ + 
\prob\left(
\pistarn \in
\raisebox{-6pt}{\scalebox{0.2}{\includegraphics{event12-34}}}
\right)
\\ &
= 1 - 3\cdot \frac{3}{2}\cdot\frac{n^2+1}{4n^2-1}
+ 
\prob\left(
\pistarn \in
\raisebox{-6pt}{\scalebox{0.2}{\includegraphics{event12-34}}}
\right)
\end{align*}
(using \eqref{eq:three-eights-finite-n} and the rotation-invariance of $\pistarn$).
Substituting the probability from \eqref{eq:anti-cluster4} and solving for $\prob\left(
\pistarn \in
\raisebox{-6pt}{\scalebox{0.2}{\includegraphics{event12-34}}}
\right)$ gives \eqref{eq:identity97}. For the second identity \eqref{eq:prov-event12-45}, perform a similar inclusion-exclusion computation for the event $\anticluster^{(n)}_5$, whose probability is given according to \eqref{eq:anti-cluster} by
$$
\prob(\anticluster^{(n)}_5) = \frac{11}{16} \cdot \frac{(n^2-1)(n^2-4)(n^2-9)}{(4n^2-1)^2 (4n^2-9)}.
$$
The details of the computation are easy and left to the reader.
\end{proof}

\subsection{Consequences for loop percolation on a half-plane}

\label{sec:consequences-halfplane}

Let us return to the original setting of loop percolation on a half-plane discussed earlier. The following result allows us to deduce exact results on probabilities of local connectivity events in the half-plane from corresponding results in the cylindrical model.

\begin{lem}
Let $A \subset \Z\times \Z$ be a finite set, and let $E_n$ denote the finite connectivity events
$$ E_n = \bigcap_{(j,k)\in A} \left\{ j\matched{\pistarn} k \right\} $$
associated with $A$ (which are defined for large enough $n$). Then we have $\prob(E_n)\to \prob(E)$ as $n\to\infty$, where $E = \bigcap_{(j,k)\in A} \left\{ j\matched{\pistar} k \right\}$.
\end{lem}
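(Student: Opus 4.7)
The plan is to prove $\prob(E_n)\to\prob(E)$ by a straightforward coupling that exploits the locality of the events. Let $I\subset\Z$ be the finite set of all integers appearing as a coordinate of some pair in $A$; both $E$ and $E_n$ depend only on the loop percolation paths starting at the boundary vertices labeled by $I$.

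I would first build a joint probability space on which an independent fair coin $\xi_s$ is assigned to each white square $s$ of $\halflat$. The half-plane graph $\lpgraph_\textrm{NE}$ is defined from the whole collection $(\xi_s)_s$. For each $n$ large enough that the indices in $I$ lie in the range $[-n+1,n]$, the cylindrical graph $\lpgraph_n$ is defined from the sub-collection of coins indexed by white squares in the fundamental domain $D_n = \{(x,y)\,:\, x+y\ge 0,\ -n+1\le x-y\le n\}$, subjected to the cylinder's boundary identification. In this coupling both models are driven by the same randomness inside $D_n$.

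For each $j\in I$, let $\tau_j$ be the path in $\lpgraph_\textrm{NE}$ starting at the boundary vertex labeled $j$, and set $R = \max_{j\in I}\,\max_{(x,y)\in\tau_j}|x-y|$. Define the good event $G_n = \{R\le n-1\}$. On $G_n$ each $\tau_j$ lies strictly inside $D_n$ in the $x-y$ direction, so every edge of $\tau_j$ is prescribed by a coin $\xi_s$ common to both models and no portion of the path is ever affected by the boundary identification; hence the path of $\lpgraph_n$ starting at the (identified) boundary vertex labeled $j$ coincides with $\tau_j$ and terminates at the same boundary vertex. This yields the equality of events $E\cap G_n = E_n\cap G_n$, and therefore $|\prob(E)-\prob(E_n)|\le \prob(G_n^c)$.

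It remains to prove $\prob(G_n^c)\to 0$. Since $G_n\uparrow\{R<\infty\}$ and $R$ is the maximum over the finite set $I$, this reduces to showing that each $\tau_j$ is almost surely finite; I expect this to be the main subtle point of the argument. The cleanest route, implicit in the paper's earlier discussion, is to extend $\lpgraph_\textrm{NE}$ to a full-plane loop percolation configuration by sampling independent fair coins on the white squares outside $\halflat$: the vertex $(j,-j)$ then lies on a loop of the full-plane configuration, which is almost surely finite by the no-infinite-component property of critical bond percolation invoked earlier (\cite[Lemma~11.12]{grimmett}); the initial segment $\tau_j$ is therefore almost surely finite as well. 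With this in hand, monotone convergence gives $\prob(G_n^c)\to 0$, and the lemma follows.
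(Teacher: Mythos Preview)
Your proposal is correct and follows essentially the same coupling argument as the paper: use the same random bits in the fundamental domain for both models, and bound $|\prob(E)-\prob(E_n)|$ by the probability that some relevant half-plane path escapes the domain $V(\lpgraph_n)$, which tends to zero by the almost sure finiteness of loop percolation paths. Your write-up is a bit more explicit (defining $R$ and $G_n$ and spelling out the finiteness argument via the full-plane extension), but the idea is identical to the paper's.
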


\begin{proof} Couple the cylindrical and half-planar loop percolation by using the same random bits to select the edge configuration incident to the vertices in the region $V(\lpgraph_n)$ (defined in \eqref{eq:v-lpgraph-n}).
With this coupling, it is easy to see that the symmetric difference $E_n \triangle E$ of the events $E_n$ and $E$ is contained in the event $B_n$ that one of the half-planar loop percolation paths starting at the point $(j,-j)$ for some $j$ belonging to one of the pairs $(j,k)\in A$ reaches the complement $\Z^2\setminus V(\lpgraph_n)$. Since the loop percolation paths are almost surely finite, we have $\lim_{n\to\infty} \prob(B_n) = \prob\left(\bigcap_{n=1}^\infty B_n\right) = 0$, and therefore we get that
$|\prob(E_n)-\prob(E)|\le \prob(E_n \triangle E) \le \prob(B_n) \to 0$ as $n\to\infty$.
\end{proof}

As immediate corollaries from the lemma we get the following facts: first, Conjecture~\ref{conj:rationality-finite-n} (or the weaker version of it mentioned in Theorem~\ref{thm:conj-implies-weak-conj}) implies Conjecture~\ref{conj:rationality}. Second, Theorem~\ref{thm:two-explicit-cases} follows as a limiting case of  \eqref{eq:three-eights-finite-n} and \eqref{eq:identity97}. Third, we have the following explicit formulas for events in the half-plane model corresponding to \eqref{eq:anti-cluster} and \eqref{eq:prov-event12-45}.

\begin{thm}
\label{thm:event135}
For $k\ge 1$, let $\anticluster_k$ denote the anti-cluster event that no two of the numbers $1,\ldots,k$ are matched under $\pistar$. We have the formulas
$$
\prob\left(\anticluster_k \right) = \frac{1}{2^{\lfloor k/2\rfloor\cdot \lfloor k/2+1\rfloor}\asm(k) R_k(k)},
$$
where $R_k(\cdot)$ is defined in \eqref{eq:complicated-product-def} (see Table~\ref{table:anti-cluster}), and 
$$
\prob\left(
\raisebox{-6pt}{\scalebox{0.2}{\includegraphics{event12-45}}}
\submatching \pistar \right) = \frac{135}{1024}.
$$
\end{thm}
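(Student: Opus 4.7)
The plan is to derive both identities as $n\to\infty$ limits of the cylindrical formulas already at hand, via the convergence lemma just proved. The event $\{1\matched{\pistar}2\}\cap\{4\matched{\pistar}5\}$ is directly of the form to which the lemma applies. For $\anticluster_k$, which is phrased negatively, I would use inclusion--exclusion to express both $\anticluster_k$ and $\anticluster_k^{(n)}$ as the same finite signed combination of finite connectivity events indexed by subsets of pairs from $\{1,\ldots,k\}$, then apply the lemma term by term to obtain $\prob(\anticluster_k)=\lim_{n\to\infty}\prob(\anticluster_k^{(n)})$.

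Combined with \eqref{eq:anti-cluster}, the first identity reduces to computing $\lim_{n\to\infty}R_k(n)$. Since the numerator and denominator of $R_k(n)$ in \eqref{eq:complicated-product-def} are polynomials in $n$ of equal degree, this limit is the ratio of their leading coefficients. The step that requires some care is the bookkeeping: I would check that in both parities numerator and denominator consist of the same number $N$ of $n^2$-factors (counted with multiplicity), with $N=(k^2-1)/8=\tfrac12\lfloor k/2\rfloor\lfloor k/2+1\rfloor$ for odd $k$ (the relevant sums are $\sum_{j=0}^{(k-1)/2}j$ for the numerator and $\sum_{i=1}^{(k-1)/2}i$ for the denominator) and $N=k(k+2)/8=\tfrac12\lfloor k/2\rfloor\lfloor k/2+1\rfloor$ for even $k$ (the analogous sums are both $\sum_{j=1}^{k/2}j$). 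Since each numerator factor is monic in $n^2$ while each denominator factor has leading coefficient $4$, it follows that $\lim_{n\to\infty}R_k(n)=4^{-N}=2^{-\lfloor k/2\rfloor\lfloor k/2+1\rfloor}$, and substitution into \eqref{eq:anti-cluster} yields the stated formula.

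For the second identity, the lemma together with \eqref{eq:prov-event12-45} yields
$$\prob\!\left(\raisebox{-6pt}{\scalebox{0.2}{\includegraphics{event12-45}}} \submatching \pistar\right) = \lim_{n\to\infty} \frac{15}{16}\cdot\frac{(n^2-4)(9n^4+38n^2-63)}{(4n^2-1)^2(4n^2-9)} = \frac{15}{16}\cdot\frac{9}{64} = \frac{135}{1024},$$
since the numerator has leading term $9n^6$ in $n$ and the denominator $64n^6$. No serious obstacle stands in the way: the heavy algebraic lifting was already done in Theorems~\ref{thm:fonseca-zinn-justin} and~\ref{thm:ninety-seven-finiten}, and what remains is a short degree count plus the arithmetic identification $N=\tfrac12\lfloor k/2\rfloor\lfloor k/2+1\rfloor$ of the exponent of $2$ in the asymptotic constant.
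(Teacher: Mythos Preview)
Your proposal is correct and follows the same approach as the paper, which simply states the theorem as an immediate corollary of the convergence lemma together with \eqref{eq:anti-cluster} and \eqref{eq:prov-event12-45}. You in fact supply more detail than the paper does: the inclusion--exclusion reduction of $\anticluster_k$ to a signed sum of finite connectivity events (needed because the lemma is phrased for positive connectivity events), and the degree count showing $\lim_{n\to\infty}R_k(n)=2^{-\lfloor k/2\rfloor\lfloor k/2+1\rfloor}$, are both accurate and fill in what the paper leaves implicit.
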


\begin{table}[h]
$$
\begin{array}{c|c|c|c|c|c|c|c}
k & 1&2&3&4&5&6&7 \\ \hline &&&&&&&\\[-1.8ex]
\prob\left(\pistar\in\anticluster_k \right) &
\displaystyle \frac58 & 
\displaystyle \frac14 & 
\displaystyle \frac{33}{512} & 
\displaystyle \frac{11}{1024} & 
\displaystyle \frac{2431}{2^{21}} & 
\displaystyle \frac{85}{2^{20}} & 
\displaystyle \frac{126293}{2^{35}}
\displaystyle \end{array}
$$
\caption{Probabilities of the anti-cluster event $\anticluster_k$ for $k=1,\ldots,7$.}
\label{table:anti-cluster}
\end{table}

\section{The theory of wheel polynomials and the qKZ equation}

\label{sec:wheel-polynomials}

Our goal in this section is to prove Theorem~\ref{thm:explicit-formulas-finite-n}. The proof will be based on an extension of an algebraic theory that was developed in a recent series of papers \cite{fonseca-zinn-justin, zinn-justin, zinn-justin-di-francesco1, zinn-justin-di-francesco2}, where it is shown that a tool from the statistical physics literature known as the \textbf{quantum Knizhnik-Zamolodchikov equation} (or \textbf{qKZ equation}) can be applied to the study of the connectivity pattern of cylindrical loop percolation. 

Mathematically, the qKZ equation as applied to the present setting reduces to the analysis of a vector space of multivariate polynomials satisfying a condition known as the \textbf{wheel condition}. (Similar and more general wheel conditions are also discussed in the papers \cite{feigin-etal, kasatani, pasquier}.)
We call such polynomials \textbf{wheel polynomials}. 
In Subsections~\ref{sec:wheel-polynomials-background}--\ref{sec:p-nested-matchings} we will survey the known results regarding the theory of wheel polynomials that are needed for our purposes. In Subsection~\ref{sec:gen-submatching} we present a new result (Theorem~\ref{thm:submatching-event-expansion}) on expansions of families of wheel polynomials associated with submatching events, that is another main result of the paper. In Subsection~\ref{sec:proof-main-thm} we show how to derive Theorem~\ref{thm:explicit-formulas-finite-n} from 
Theorem~\ref{thm:submatching-event-expansion}.

\subsection{Background}

\label{sec:wheel-polynomials-background}

Let $\noncn{n}$ denote as before the set of noncrossing matchings of $1,\ldots,2n$. It is well-known that elements of $\noncn{n}$ are in canonical bijection with the set of \textbf{Dyck paths} of length $2n$, and with the set of Young diagrams contained in the staircase shape $(n-1,n-2,\ldots,1)$. These bijections will play an important role, and are illustrated in Fig.~\ref{fig:noncn-bijections}; see \cite[Section 2]{fonseca-zinn-justin} for a detailed explanation.

\begin{figure}
\begin{center}
\begin{tabular}{cc}
\scalebox{0.7}{\includegraphics{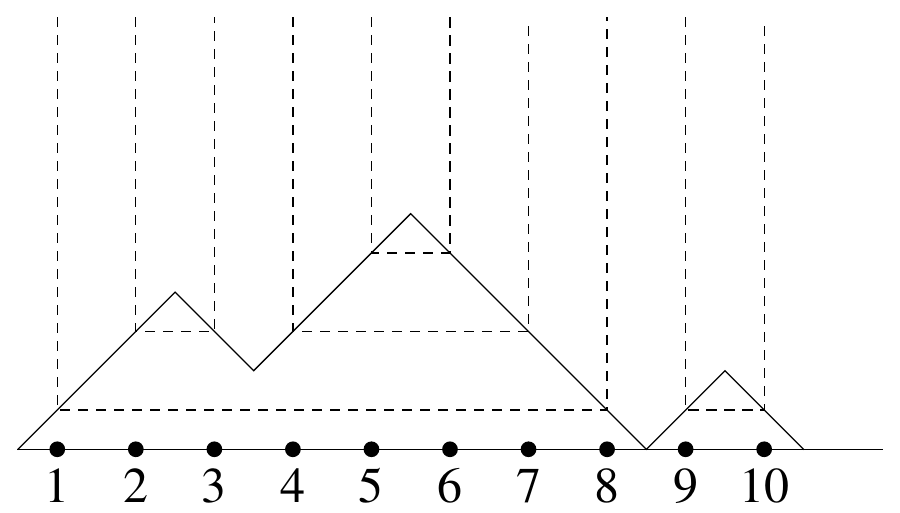}} &
\scalebox{0.7}{\includegraphics{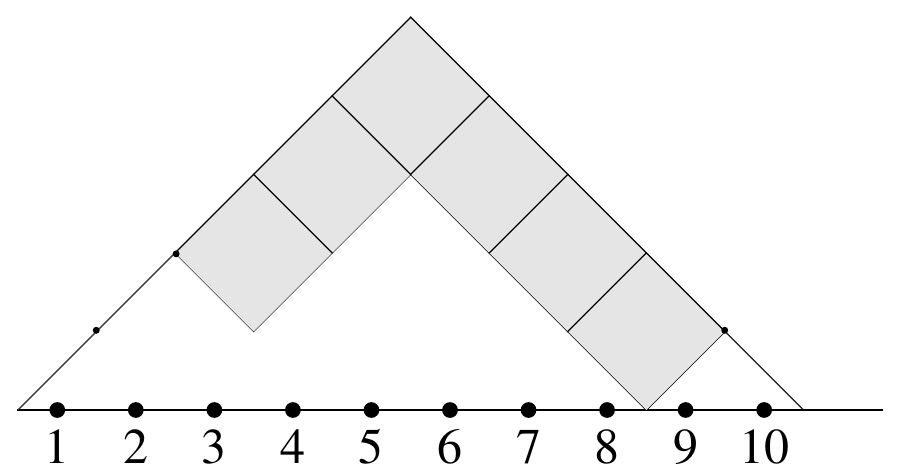}}

\end{tabular}
\caption{The noncrossing matching from Fig.~\ref{fig:example-noncrossing-matching}(a) represented as a Dyck path and as a Young diagram.}
\label{fig:noncn-bijections}
\end{center}
\end{figure}

We adopt the following notation related to these bijections. If $\pi\in\noncn{n}$, let $\pi_k$ be $1$ if $\pi(k)>k$ (``$k$ is matched to the right'') or $-1$ if $\pi(k)<k$ (``$k$ is matched to the left''). The vector $(\pi_1,\ldots,\pi_{2n})$ is the encoding of $\pi$ as the sequence of steps in the associated Dyck path. Let $\pi^+$ denote the sequence $(a_1,\ldots,a_n)$ of positions where $\pi_k=1$ (the positions of increase of the Dyck path). Encoding $\pi$ in this way maps $\noncn{n}$ bijectively onto the set
\begin{align}
\noncn{n}^+ &= \big\{ \mathbf{a}=(a_1,\ldots,a_n)\,:\, 1\le a_1<\ldots< a_n \le 2n-1
\nonumber\\ & \hspace{160pt} \textrm{and }a_j\le 2j-1 \textrm{ for all }j \big\}.
\label{eq:noncn-plus-def}
\end{align}

The Young diagram associated with a noncrossing matching $\pi\in\noncn{n}$ is denoted~$\lambda_\pi$. For $\pi,\sigma\in\noncn{n}$, denote $\sigma\nearrow \pi$ if $\lambda_\sigma$ is obtained from $\lambda_\pi$ by the addition of a single box, and $\pi \preceq \sigma$ if $\lambda_\pi$ is contained in $\lambda_{\sigma}$. (This partial order on $\noncn{n}$ is precisely the order with respect to which the matchings $\pi^n_{\textrm{min}}$ and $\pi^n_{\textrm{max}}$ defined in Subsection~\ref{sec:cylinder-background} are minimal and maximal, respectively.)
Denote $\pi\nearrow_j \sigma$ if $\lambda_\sigma$ is obtained from $\lambda_\pi$ by adding a box in a position that, in the coordinate system of Fig.~\ref{fig:noncn-bijections}, lies vertically above the positions $j$ and $j+1$ on the horizontal axis.

\begin{defi}[Wheel polynomials]
Let $q=e^{2\pi i/3}$.
A \textbf{wheel polynomial of order $n$} is a polynomial $p(\mathbf{z}) = p(z_1,\ldots,z_{2n})$ having the following properties:
\begin{enumerate}
\item $p$ is a homogeneous polynomial of total degree $n(n-1)$;
\item $p$ satisfies the \textbf{wheel condition}
\begin{equation}
\label{eq:wheel-condition}
p(z_1,\ldots,z_{2n})_{\big| z_k = q^2 z_j = q^4 z_i} = 0 \qquad (1\le i<j<k\le 2n).
\end{equation}
\end{enumerate}
Denote by $\wheelpoly{n}$ the vector space of wheel polynomials of order $n$.
\end{defi}

It is easy to see that the condition \eqref{eq:wheel-condition} depends only on the cyclical ordering of the variables $z_1,\ldots,z_{2n}$ (in other words, the space $\wheelpoly{n}$ is invariant under the rotation action $p(z_1,\ldots,z_{2n})\mapsto p(z_2,\ldots,z_{2n},z_1)$); hence the name ``wheel polynomials.''

The algebraic properties of wheel polynomials are quite elegant and of direct relevance to our study of connectivity patterns in loop percolation. Below we survey some of the known theory. 

\begin{example}
\label{example:minimal-wheelpoly}
Let $p(\mathbf{z})=\prod_{1\le i<j\le n} \left(qz_i - q^{-1} z_j \right) \prod_{n+1 \le i<j \le 2n} \left(qz_i - q^{-1} z_j\right)$. It is easy to check that $p$ is a wheel polynomial. Indeed, it is homogeneous of the correct degree, and if $1\le i<j<k\le 2n$ and we make the substitution $z_k=q^2 z_j = q^4 z_i$, then, if $j\le n$ there will be a zero factor in the first product $\prod_{1\le i<j\le n} \left(qz_i - q^{-1} z_j \right)$, and similarly if $j>n$ then there will be a zero factor in the second product. In either case the wheel condition \eqref{eq:wheel-condition} is satisfied.
\end{example}

Two types of pointwise evaluations of a wheel polynomial will be of particular interest: at the point $\mathbf{z}=(1,1,\ldots,1)$ and at a point $\mathbf{z}=(q^{-\pi_1},\ldots,q^{-\pi_{2n}})$ associated with the steps of a Dyck path, where $\pi\in\noncn{n}$. As a shorthand, we denote $p(\mathbf{1})=p(1,\ldots,1)$ and $p(\pi)=p(q^{-\pi_1},\ldots,q^{-\pi_{2n}})$ and refer to these values as the \textbf{$\mathbf{1}$-evaluation} and \textbf{$\pi$-evaluation} of $p$, respectively.

\begin{thm}
\label{thm:sufficient-evaluations}
A polynomial $p\in \wheelpoly{n}$ is determined uniquely by its values $p(\pi)$ as $\pi$ ranges over the noncrossing matchings in $\noncn{n}$. Equivalently, the $\pi$-evaluation linear functionals $(\operatorname{ev}_\pi)_{\pi\in \noncn{n}}$ defined by $\operatorname{ev}_\pi(p)= p(\pi)$ span the dual vector space $(\wheelpoly{n})^*$.
\end{thm}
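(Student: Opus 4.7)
The plan is to prove the statement in its equivalent form: the evaluation map $\operatorname{ev}: \wheelpoly{n} \to \C^{\noncn{n}}$, $p \mapsto (p(\pi))_{\pi \in \noncn{n}}$, is injective. The equivalence to spanning is the standard linear-algebraic duality $\dim \operatorname{span}(\operatorname{ev}_\pi) = \dim \wheelpoly{n} - \dim \bigcap_\pi \ker(\operatorname{ev}_\pi)$: the joint kernel vanishes precisely when the $\operatorname{ev}_\pi$ span the dual. I would proceed by induction on $n$; the base cases $n=0,1$ are immediate, since $\wheelpoly{n}$ is one-dimensional and $|\noncn{n}|=1$.

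For the inductive step, assume the result for $n-1$ and take $p \in \wheelpoly{n}$ vanishing at every $\pi$-evaluation point. Consider the one-parameter specialization $z_{2n-1} = q^{-1} t$, $z_{2n} = q t$ (so $z_{2n} = q^2 z_{2n-1}$, matching the evaluation coordinates of any $\pi$ with $\pi(2n-1) = 2n$). The wheel condition \eqref{eq:wheel-condition} applied to each triple $(i, 2n-1, 2n)$ with $i < 2n-1$ forces $p$ to vanish whenever $z_i = q^{-2} z_{2n-1} = q^{-3} t = t$ (using $q^3 = 1$), so by the factor theorem
\begin{equation*}
p(z_1, \ldots, z_{2n-2}, q^{-1} t, q t) \;=\; \prod_{i=1}^{2n-2}(z_i - t) \cdot \tilde{p}(z_1, \ldots, z_{2n-2}, t),
\end{equation*}
where $\tilde{p}$ is homogeneous of total degree $(n-1)(n-2)$. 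A direct check confirms that $\tilde{p}$, with $t$ serving as a homogenization variable, inherits the wheel condition and so lies in $\wheelpoly{n-1}$. The matchings $\pi \in \noncn{n}$ with $\pi(2n-1) = 2n$ correspond bijectively to $\sigma \in \noncn{n-1}$ via $\pi \leftrightarrow \sigma := \pi|_{[1, 2n-2]}$, and the corresponding evaluation points of $p$ lie on the specialization hypersurface; dividing by the nonzero known factors $q^{-\sigma_i} - 1$ yields $\tilde{p}(\sigma) = 0$ for every $\sigma \in \noncn{n-1}$. By the induction hypothesis, $\tilde{p} \equiv 0$, so $p$ is divisible by $z_{2n} - q^2 z_{2n-1}$. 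By the cyclic symmetry of $\wheelpoly{n}$ (which permutes $\noncn{n}$ by the corresponding rotation of matchings), the same argument applied to each cyclic rotation of $p$ shows that $p$ is divisible by $z_{k+1} - q^2 z_k$ for every $k$ (indices modulo $2n$).

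The hard part will be the final step: concluding $p \equiv 0$ from this cyclic divisibility together with the wheel condition. The product $\prod_{k=1}^{2n}(z_{k+1} - q^2 z_k)$ has degree $2n$, whereas $\deg p = n(n-1)$, so the divisibility alone kills $p$ only for $n \leq 2$. To close the argument for general $n$, one would iterate the specialization---applying the induction to a reduction of the quotient $p / \prod_k(z_{k+1} - q^2 z_k)$ after verifying that it inherits a suitable reduced wheel condition---or employ compound specializations that exploit more of the wheel structure. A cleaner alternative route is to exhibit an explicit basis $(\phi_\pi)_{\pi \in \noncn{n}}$ of $\wheelpoly{n}$ whose evaluation matrix $(\phi_\pi(\sigma))_{\pi, \sigma \in \noncn{n}}$ is triangular with respect to the partial order $\preceq$ with nonzero diagonal entries $\phi_\pi(\pi) \neq 0$; such a basis immediately yields $\dim \wheelpoly{n} = \cat(n)$ together with the linear independence, hence spanning, of the $\operatorname{ev}_\pi$. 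Such a triangular basis is furnished by the components of the qKZ-equation solution, and its rigorous construction is the substantive content of the qKZ-based works referenced in the paper.
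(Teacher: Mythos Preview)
The paper defers to Appendix~C of \cite{fonseca-zinn-justin-doubly} rather than giving a proof. Your inductive specialization $z_{2n}=q^2 z_{2n-1}$, factoring out $\prod_i(z_i-t)$, and descent to $\wheelpoly{n-1}$ is exactly the mechanism used there, so the overall strategy matches the reference.

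Two genuine gaps remain, however. First, the cyclic-symmetry step is not valid as written: the rotation $p\mapsto p(z_2,\dots,z_{2n},z_1)$ preserves $\wheelpoly{n}$ but does \emph{not} permute the $\pi$-evaluation points, since cyclically shifting a Dyck sign sequence $(\pi_1,\dots,\pi_{2n})$ generally yields a non-Dyck sequence (e.g.\ $(+,-,+,-)\mapsto(-,+,-,+)$ for $n=2$); hence the rotated $p$ is not known to vanish at all $\sigma$-points. This is easily repaired by running the specialization argument directly at each pair $(j,j+1)$ with $1\le j\le 2n-1$, since matchings with a little arc there exist. Second, and more seriously, your ``cleaner alternative'' via the qKZ polynomials is circular within the paper's logic: the relation $\Psi_\pi(\sigma)=\delta_{\pi,\sigma}$ shows that the $\Psi_\pi$ are linearly independent (so $\dim\wheelpoly{n}\ge\cat(n)$) and that the $\operatorname{ev}_\sigma$ are linearly independent, but it does \emph{not} give the upper bound $\dim\wheelpoly{n}\le\cat(n)$, which is precisely the content of the theorem. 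Calling the $\Psi_\pi$ a ``basis'' presupposes what you are trying to prove. The acknowledged hard step---passing from divisibility by the $(z_{j+1}-q^2 z_j)$ to $p\equiv 0$---is the real work in the cited appendix, and neither of your two suggested routes actually supplies it.
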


\begin{proof} See \cite[Appendix C]{fonseca-zinn-justin-doubly}.
\end{proof}

In particular, it follows that $\dim \wheelpoly{n}\le |\noncn{n}|=\cat(n)$. We shall soon see that this is an equality, that is, the evaluation functionals $(\operatorname{ev}_\pi)_{\pi\in \noncn{n}}$ are in fact a basis of $(\wheelpoly{n})^*$. The proof of this fact involves a remarkable explicit construction of a family of wheel polynomials that will turn out to be a basis of $\wheelpoly{n}$ dual to $(\operatorname{ev}_\pi)_{\pi\in \noncn{n}}$, and will play a central role in our analysis.

For a multivariate polynomial $p\in \C[z_1,\ldots,z_m]$ and $1\le j< m$, define the divided difference operator $\partial_j:\C[z_1,\ldots,z_m]\to \C[z_1,\ldots,z_m]$ by
$$
(\partial_j p)(z_1,\ldots,z_m) =
\frac{p(z_1,\ldots,z_{j+1},z_j,\ldots,z_{2n})-
p(z_1,\ldots,z_j,z_{j+1},\ldots,z_{2n})}{z_{j+1}-z_j}.
$$
The next two lemmas are easy to check; see \cite[Sec.~4.2.1]{zinn-justin} for a related discussion.

\begin{lem}
\label{lem:divided-difference-wheelpoly}
If $p\in \wheelpoly{n}$ and $1\le j\le 2n-1$, then $(q z_j - q^{-1} z_{j+1}) \partial_j p$ is also in $\wheelpoly{n}$.
\end{lem}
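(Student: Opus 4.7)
The plan is to verify the two defining properties of a wheel polynomial for $f := (qz_j - q^{-1}z_{j+1})\,\partial_j p$. Homogeneity is cheap: writing $\partial_j p = (s_j p - p)/(z_{j+1}-z_j)$, where $s_j$ is the transposition swapping $z_j$ and $z_{j+1}$, shows that $\partial_j p$ is a polynomial (since the numerator vanishes on the hyperplane $z_j=z_{j+1}$) and that it is homogeneous of degree $n(n-1)-1$; multiplying by the degree-one factor $qz_j - q^{-1}z_{j+1}$ restores the degree to $n(n-1)$. So the real content is the wheel condition.

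Fix a triple $i<\ell<k$ and let $\phi$ denote the substitution $z_\ell \mapsto q^2 z_i$, $z_k \mapsto q^4 z_i$. I want to show $\phi(f)=0$. The natural case analysis is on the size of the overlap $\{j,j+1\}\cap\{i,\ell,k\}$. When the overlap is empty, $\phi$ commutes with $s_j$ and with division by $z_{j+1}-z_j$, so $\phi(\partial_j p) = \partial_j(\phi p) = 0$. When the overlap has size two, the only possibilities are $(i,\ell)=(j,j+1)$ or $(\ell,k)=(j,j+1)$ (since $i<\ell<k$ forces $k-i\ge 2$); in either case $\phi$ sends $z_{j+1}$ to $q^2 z_j$, so the prefactor $qz_j - q^{-1}z_{j+1}$ becomes $qz_j - q\cdot z_j = 0$ and we are done.

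The substantive case is overlap of size one, which unfolds into six sub-cases according to whether $j$ or $j+1$ is the shared index and whether it plays the role of $i$, $\ell$, or $k$. In each sub-case I will show that both $\phi(p)=0$ and $\phi(s_j p)=0$, which together give $\phi(\partial_j p)=0$ and hence $\phi(f)=0$. The vanishing $\phi(p)=0$ is just the wheel condition on $p$ at the original triple $(i,\ell,k)$. The key observation for $\phi(s_j p)$ is that $s_j$ swaps the variables at positions $j$ and $j+1$, which, when composed with $\phi$, produces the evaluation of $p$ with the substituted values at the same three positions as before except that the unique index in $\{j,j+1\}\cap\{i,\ell,k\}$ is replaced by its neighbor. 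Because $j$ and $j+1$ are adjacent, this replacement preserves the strict ordering of the triple relative to the other two indices, so the altered triple $(i',\ell',k')$ still satisfies $i'<\ell'<k'$ and still carries the values $(z_*,q^2 z_*, q^4 z_*)$. Thus the wheel condition for $p$ applies at this shifted triple and yields $\phi(s_j p)=0$.

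The main obstacle is just bookkeeping through the six overlap-one sub-cases (namely $i\in\{j,j+1\}$, $\ell\in\{j,j+1\}$, $k\in\{j,j+1\}$), but the guiding principle in each is identical: $s_j$ shifts the distinguished index by one within $\{j,j+1\}$, and this one-step shift is absorbed harmlessly because $j$ and $j+1$ are adjacent. Once all six are checked, the wheel condition for $f$ is established in every configuration of $(i,\ell,k)$, completing the proof.
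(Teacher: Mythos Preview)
Your argument is correct and is precisely the kind of direct verification the paper has in mind when it says the lemma is ``easy to check'' (the paper gives no proof, only a pointer to \cite[Section~4.2.1]{zinn-justin}). The case split on $|\{j,j+1\}\cap\{i,\ell,k\}|$ is the natural one, and your handling of each case is sound; in particular, in the overlap-two situations the prefactor $qz_j - q^{-1}z_{j+1}$ kills $f$, and in the six overlap-one sub-cases the shifted triple obtained by replacing the unique shared index by its neighbor in $\{j,j+1\}$ does remain strictly ordered and carries the values $(w, q^2 w, q^4 w)$, so the wheel condition for $p$ applies.

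One small point worth making explicit: in the overlap-one case you deduce $\phi(\partial_j p)=0$ from $\phi(p)=\phi(s_j p)=0$. This uses the identity $(z_{j+1}-z_j)\,\partial_j p = s_j p - p$ together with the fact that $\phi(z_{j+1}-z_j)$ is a \emph{nonzero} polynomial (since at most one of $z_j, z_{j+1}$ is specialized by $\phi$), so you may cancel it in the integral domain of polynomials. You implicitly use this but do not say it; it is worth one sentence.
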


\begin{lem} 
If for some $1\le j\le 2n-1$, $\pi\in\noncn{n}$ satisfies $j\matched{\pi}j+1$, then the set of preimages $e_j^{-1}(\pi)$ of $\pi$ under $e_j$ (the Temperley-Lieb operator defined in \eqref{eq:temperley-lieb-gens}) consists of: 
\begin{enumerate}
\item a noncrossing matching $\hat{\pi}\in \noncn{n}$ satisfying $\pi\nearrow_j \hat{\pi}$, if such a matching exists (i.e., if the associated Young diagram is still contained in the staircase shape $(n-1,\ldots,1)$); and
\item a set of noncrossing matchings $\sigma\in \noncn{n}$ satisfying $\sigma\preceq \pi$.
\end{enumerate}
\end{lem}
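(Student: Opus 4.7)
The plan is to prove the lemma by directly classifying the preimages of $\pi$ under $e_j$ from the definition \eqref{eq:temperley-lieb-gens}. First I would observe that $\pi$ itself lies in $e_j^{-1}(\pi)$, since $j\matched{\pi} j+1$ forces $e_j(\pi)=\pi$, and that $\pi\preceq\pi$ so this preimage falls into item~2. For any other preimage $\sigma$, unpacking the definition of $e_j$ forces $j\notmatched{\sigma} j+1$, the set $\{\sigma(j),\sigma(j+1)\}=\{a,b\}$ to be a pair of $\pi$ other than $\{j,j+1\}$, and every remaining pair of $\sigma$ to agree with the corresponding pair of $\pi$. So the enumeration reduces to a case analysis over pairs $\{a,b\}$ of $\pi$.

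Because $\pi$ is noncrossing, $\{a,b\}$ falls into exactly one of three configurations relative to $\{j,j+1\}$: (I) $a<b<j$, (II) $j+1<a<b$, or (III) $a<j<j+1<b$. In each case a short planarity check rules out one of the two possible orderings of $\sigma(j),\sigma(j+1)$, leaving at most one candidate $\sigma$: in (I) the new arcs are $(b,j)$ and $(a,j+1)$; in (II) they are $(j,b)$ and $(j+1,a)$; and in (III) they are $(a,j)$ and $(j+1,b)$. I would then verify that the remaining arcs of $\pi$ remain noncrossing with these two new arcs, a routine local check.

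Case (III) yields $\hat\pi$. I would argue its uniqueness by the observation that if several arcs of $\pi$ surround $\{j,j+1\}$ they are nested, and only the innermost surrounding arc produces a noncrossing $\sigma$, since for any outer surrounding arc $(a',b')$ the new short arc $(a',j)$ would be crossed by the inner surrounding arcs of $\pi$ that are kept in $\sigma$. Translating to Dyck paths, the swap in case (III) changes the two step signs at positions $j,j+1$ from the peak $(+,-)$ into the valley $(-,+)$, so $\lambda_\sigma$ equals $\lambda_\pi$ with one extra box placed directly above positions $j,j+1$; this is exactly $\pi\nearrow_j\sigma$. The existence of a surrounding arc is equivalent to the added box fitting inside the staircase $(n-1,\ldots,1)$, giving item~1.

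In cases (I) and (II) the Dyck path of $\sigma$ differs from that of $\pi$ at exactly two positions (for instance at $b$ and $j$ in case (I), where $\pi$ has signs $(-,+)$ and $\sigma$ has $(+,-)$), with the effect that $\sigma$'s path is raised by $2$ on the strip strictly between those positions and agrees with $\pi$'s elsewhere. Under the paper's convention (fixed by the fact that $\pi^n_\textrm{min}$ corresponds to the empty Young diagram) higher Dyck paths correspond to smaller Young diagrams, so $\lambda_\sigma\subsetneq\lambda_\pi$ and hence $\sigma\prec\pi$, as required by item~2. The main obstacle I anticipate is pinning down the orientation of the matching/Dyck/Young-diagram dictionary so that cases (I) and (II) are correctly identified as producing $\sigma\preceq\pi$ (rather than $\sigma\succeq\pi$); once that is done, the local planarity verifications are routine.
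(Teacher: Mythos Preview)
The paper does not actually prove this lemma: it merely states that ``the next two lemmas are easy to check'' and points to \cite[Sec.~4.2.1]{zinn-justin} for related discussion. Your proposal is a correct direct verification along exactly the lines one would expect, and the Dyck-path/Young-diagram bookkeeping in cases (I)--(III) is right (in particular your orientation check via $\pi^n_{\min}$ is the correct one).

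One phrasing should be tightened. The sentence ``I would then verify that the remaining arcs of $\pi$ remain noncrossing with these two new arcs, a routine local check'' can be read as asserting that the candidate $\sigma$ in cases (I) and (II) is \emph{always} noncrossing. It is not: for instance with $n=3$, $\pi$ having arcs $(1,4),(2,3),(5,6)$ and $j=5$, the pair $\{a,b\}=\{2,3\}$ produces the crossing candidate with arcs $(1,4),(3,5),(2,6)$. This does no harm to the lemma, which only claims that every preimage lies in item~1 or item~2, not that every pair $\{a,b\}$ contributes a preimage. So in cases (I) and (II) the correct statement is: \emph{if} the candidate happens to be noncrossing (and hence lies in $e_j^{-1}(\pi)\subset\noncn{n}$), then your Dyck-path comparison shows its path is raised by $2$ on an interval, whence $\lambda_\sigma\subsetneq\lambda_\pi$ and $\sigma\prec\pi$; if the candidate is crossing there is nothing to prove. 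With that clarification your argument is complete. (You already handle the analogous filtering correctly in case (III), where you note that only the innermost surrounding arc yields a noncrossing $\sigma$.)
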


The last two lemmas make it possible to construct a family of wheel polynomials indexed by noncrossing matchings in $\noncn{n}$ (or, equivalently, by Young diagrams contained in the staircase shape $(n-1,\ldots,1)$) recursively. We start with an explicit polynomial known to be in $\wheelpoly{n}$---a scalar multiple of the polynomial from Example~\ref{example:minimal-wheelpoly} above---which we associate with the minimal matching $\pi^n_{\textrm{min}}$ (which corresponds to the empty Young diagram). We then define for each noncrossing matching $\pi$ a polynomial obtained from the polynomials of matchings preceding $\pi$ in the order $\preceq$ using linear combinations and the operation of Lemma~\ref{lem:divided-difference-wheelpoly}. The precise definition is as follows.

\begin{defi}[qKZ basis]
The \textbf{qKZ polynomials} are a family of polynomials $(\Psi_\pi)_{\pi\in\noncn{n}}$ defined using the following recursion on Young diagrams:
\begin{align}
\Psi_{\pi^n_{\textrm{min}}}(\mathbf{z}) &= (-3)^{-\binom{n}{2}} \prod_{1\le i<j\le n} \left(qz_i - q^{-1} z_j \right) \prod_{n+1 \le i<j \le 2n} \left(qz_i - q^{-1} z_j\right), \label{eq:qkz1} \\
\Psi_\pi(\mathbf{z}) &= \left(qz_j - q^{-1} z_{j+1}\right) \partial_j \Psi_{\sigma} -
\sum_{\nu \in e_j^{-1}(\sigma) \setminus \{\pi,\sigma\}} \Psi_\nu
\ \ \ \textnormal{ if }\sigma \nearrow_j \pi.
\label{eq:qkz2}
\end{align}
\end{defi}

Since for a given noncrossing matching $\pi$, the choices of $\sigma$ and $j$ such that $\sigma\nearrow_j \pi$ are in general not unique, it is not clear that the above definition makes sense. The fact that it does is a nontrivial statement, which we include as part of the next result.

\begin{thm} 
\label{thm:qkz-poly-properties}
The qKZ polynomials satisfy:
\begin{enumerate}
\item $\Psi_\pi$ is well-defined, i.e., the result of the recursive computation does not depend on the order in which boxes are added to the Young diagram.
\item $\Psi_\pi \in \wheelpoly{n}$.
\item Let $\rho(\pi)$ denote the rotation operator acting on $\pi$, defined by $(\rho (\pi))(k)=\pi(k+1)$ for $1\le k<2n$ and $(\rho (\pi))(2n)=\pi(1)$. Then we have
\begin{equation}
\Psi_{\rho(\pi)}(z_1,\ldots,z_{2n}) = \Psi_\pi(z_2,\ldots,z_{2n},z_1).
\label{eq:qkz3}
\end{equation}
\end{enumerate}
\end{thm}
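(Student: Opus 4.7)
My plan is to prove the three parts by simultaneous induction on $|\lambda_\pi|$, with (2) arising almost immediately from Lemma~\ref{lem:divided-difference-wheelpoly}, (1) being the principal obstacle via a local ``rhombus'' argument, and (3) following from an intertwining of the operators under cyclic shift. For (2), the base case $\pi=\pi^n_\textrm{min}$ is handled by Example~\ref{example:minimal-wheelpoly}: the polynomial on the right of \eqref{eq:qkz1} is homogeneous of degree $2\binom{n}{2}=n(n-1)$ and satisfies the wheel condition, and the scalar $(-3)^{-\binom{n}{2}}$ preserves both properties. For the inductive step, whenever $\sigma\nearrow_j\pi$ and $\Psi_\sigma\in\wheelpoly{n}$ by induction, the first term $(qz_j-q^{-1}z_{j+1})\partial_j\Psi_\sigma$ of \eqref{eq:qkz2} lies in $\wheelpoly{n}$ by Lemma~\ref{lem:divided-difference-wheelpoly} (the prefactor raises degree by $1$ while $\partial_j$ lowers it by $1$, preserving $n(n-1)$), while every subtracted term $\Psi_\nu$ has $\nu\preceq\sigma\prec\pi$ and thus lies in $\wheelpoly{n}$ by induction.

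The hard part is (1). I would reduce to the local rhombus case: given distinct $\sigma,\sigma'\nearrow\pi$ with $\sigma\nearrow_j\pi$ and $\sigma'\nearrow_{j'}\pi$, the two boxes $\lambda_\pi\setminus\lambda_\sigma$ and $\lambda_\pi\setminus\lambda_{\sigma'}$ must lie in distinct rows and distinct columns of the staircase (otherwise $\sigma=\sigma'$), which forces $|j-j'|\ge 2$; letting $\tau$ denote the matching whose Young diagram is $\lambda_\sigma\cap\lambda_{\sigma'}$, one has $\tau\nearrow_{j'}\sigma$ and $\tau\nearrow_j\sigma'$ as well. I would then establish agreement of the two expressions for $\Psi_\pi$ via two parallel commutations: first, the operators $T_j:=(qz_j-q^{-1}z_{j+1})\partial_j$ and $T_{j'}$ commute on $\wheelpoly{n}$, since $\partial_j\partial_{j'}=\partial_{j'}\partial_j$ when $|j-j'|\ge 2$ and each multiplier is invariant under the other's adjacent transposition; second, the Temperley-Lieb operators $e_j$ and $e_{j'}$ themselves commute on $\noncn{n}$ under the same separation. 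Expanding both forms of \eqref{eq:qkz2} for $\Psi_\pi$ via a further application of the recursion back to $\Psi_\tau$, and then collecting the leading double action $T_jT_{j'}\Psi_\tau=T_{j'}T_j\Psi_\tau$ together with the lower-order correction sums, one sees both expressions collapse to the same polynomial. The chief technical hurdle is the bookkeeping of which matchings appear in the preimage sets $e_j^{-1}(\sigma')$, $e_{j'}^{-1}(\sigma)$, $e_j^{-1}(\tau)$, $e_{j'}^{-1}(\tau)$ and how they cancel in pairs; this is most efficiently tracked via the Dyck-path encoding, since the local picture around the two added boxes is completely explicit.

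For (3), I would verify by induction on $|\lambda_\pi|$ that the cyclic shift $R:p(z_1,\ldots,z_{2n})\mapsto p(z_2,\ldots,z_{2n},z_1)$ sends $\Psi_\pi$ to $\Psi_{\rho(\pi)}$. The ingredients are: (a) $R$ preserves $\wheelpoly{n}$ (the wheel condition depends only on the cyclic ordering of the variables) and satisfies $R T_j R^{-1}=T_{j-1}$ with indices taken mod $2n$; (b) $\rho$ intertwines the Temperley-Lieb operators as $\rho\circ e_j=e_{j-1}\circ\rho$, so the shift in $j$ on the divided-difference side of \eqref{eq:qkz2} exactly matches the shift on the preimage side; and (c) a base-case check that $R\Psi_{\pi^n_\textrm{min}}$ coincides with the polynomial produced by running the recursion from $\pi^n_\textrm{min}$ to $\rho(\pi^n_\textrm{min})$, which reduces to a direct manipulation of the explicit product in \eqref{eq:qkz1}. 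With these in place, applying $R$ to \eqref{eq:qkz2} for $\Psi_\pi$ exhibits $R\Psi_\pi$ as satisfying the same recursion as $\Psi_{\rho(\pi)}$, so by well-definedness from (1) the two must agree.
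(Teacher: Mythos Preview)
The paper does not give its own proof of this theorem; it simply refers the reader to \cite[Section~4.2]{zinn-justin}. So there is no in-paper argument to compare against, and your proposal should be judged on its own merits.

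Parts (1) and (2) of your outline are reasonable. The argument for (2) is correct as written. For (1), the reduction to a local rhombus check is the right strategy, and your observation that two distinct covers $\sigma\nearrow_j\pi$, $\sigma'\nearrow_{j'}\pi$ force $|j-j'|\ge 2$ (hence $T_jT_{j'}=T_{j'}T_j$ and $e_je_{j'}=e_{j'}e_j$) is correct. The bookkeeping you flag is genuinely nontrivial, since applying $T_j$ to a correction term $\Psi_\nu$ does not directly produce another qKZ polynomial, but the outline is sound.

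Part (3), however, has a real gap. Your plan is to induct on $|\lambda_\pi|$ and, at the inductive step, apply $R$ to the recursion \eqref{eq:qkz2} for $\Psi_\pi$ built from some $\sigma\nearrow_j\pi$, obtaining an expression in $T_{j\pm 1}\Psi_{\rho(\sigma)}$ and lower terms, which you then want to identify with the defining recursion for $\Psi_{\rho(\pi)}$. This requires that $\rho(\sigma)\nearrow_{j\pm 1}\rho(\pi)$, i.e.\ that rotation preserves the covering relation in the Young-diagram order with the appropriate index shift. It does not. Rotation is not monotone for $\preceq$ and does not preserve $|\lambda_\pi|$: already for $n=2$ one has $\rho(\pi^2_{\min})=\pi^2_{\max}$ and $\rho(\pi^2_{\max})=\pi^2_{\min}$, so $\pi^2_{\min}\nearrow_2\pi^2_{\max}$ is sent by $\rho$ to the pair $(\pi^2_{\max},\pi^2_{\min})$, for which no relation $\pi^2_{\max}\nearrow_{j'}\pi^2_{\min}$ can hold. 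Consequently the expression you obtain for $R\Psi_\pi$ is not an instance of \eqref{eq:qkz2} for $\Psi_{\rho(\pi)}$, and well-definedness from (1) cannot be invoked to conclude. The intertwining facts you list in (a) and (b) are correct, but they are not enough: the cyclic relation \eqref{eq:qkz3} is exactly the part of the qKZ system that does \emph{not} reduce to the local exchange relations, and in \cite{zinn-justin} it is handled by working with the full qKZ equation (the ``scattering'' and ``cyclicity'' equations together) rather than by the Young-diagram induction you propose. Your base-case check (c) is likewise more than a ``direct manipulation'' of \eqref{eq:qkz1}, since $\Psi_{\rho(\pi^n_{\min})}$ is only accessible through the recursion.
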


\begin{proof}
See \cite[Section 4.2]{zinn-justin}.
\end{proof}

As explained in \cite[Sections 4.1--4.2]{zinn-justin}, equations \eqref{eq:qkz1}--\eqref{eq:qkz3} together are equivalent to a different system of equations which forms (a special case of) the qKZ equation. A different way of solving the same system, which we will not use here, is described in \cite{de-gier-lascoux-sorrell}.

If $\pi\in\noncn{n}$ and $1\le j\le 2n-1$ is a number such that $j\matched{\pi}j+1$, denote by $\hat{\pi}_j$ the noncrossing matching in $\noncn{n-1}$ obtained by deleting the arc connecting $j$ and $j+1$ from the diagram of the matching and relabelling the remaining elements. This operation makes it possible to perform many computations recursively. The next result provides an important example.

\begin{thm}
\label{thm:qkz-eval-recursion}
If $j\matched{\pi}j+1\in \noncn{n}$ then for any $\sigma\in \noncn{n}$ we have
\begin{equation}
\label{eq:qkz-eval-recursion}
\Psi_\pi(\sigma) = \begin{cases} 3^{n-1} \Psi_{\hat{\pi}_j}(\hat{\sigma}_j)
& \textnormal{if }j\matched{\sigma}j+1, \\ 0 & \textnormal{otherwise}.
\end{cases}
\end{equation}
\end{thm}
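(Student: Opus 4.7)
The plan is to prove the theorem by induction on $|\lambda_\pi|$, the number of boxes in the Young diagram of $\pi$, using the qKZ recursion \eqref{eq:qkz2} as the main inductive step. First I would handle the base case $\pi = \pi^n_{\textrm{min}}$, where $\lambda_\pi = \emptyset$ and the only $j$ satisfying $j \matched{\pi} j+1$ is $j = n$. Using the explicit formula \eqref{eq:qkz1} and a Dyck-path evaluation $z_k = q^{-\sigma_k}$, each factor $q z_i - q^{-1} z_\ell$ with $i < \ell \le n$ becomes $q^{1-\sigma_i} - q^{-1-\sigma_\ell}$, which vanishes exactly when $\sigma_i = +1$ and $\sigma_\ell = -1$. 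For the full product to be nonzero, neither the first nor the second half of $\sigma$'s Dyck path can contain a $+1$ followed by a $-1$ internally; combined with nonnegativity of partial sums this forces $\sigma = \pi^n_{\textrm{min}}$, which does satisfy $n \matched{\sigma} n+1$. A direct evaluation of the remaining product then matches the recursion value predicted by the theorem.

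For the inductive step with $\pi \ne \pi^n_{\textrm{min}}$, I would choose an inner corner of $\lambda_\pi$ not adjacent to the arc $(j, j+1)$: concretely, a pair $\sigma_0 \nearrow_i \pi$ with $i \notin \{j-1, j, j+1\}$, so that $j \matched{\sigma_0} j+1$ is preserved. Such a corner exists except when the arc $(j, j+1)$ is pinned against every inner corner of $\lambda_\pi$, in which case the cyclic symmetry \eqref{eq:qkz3} first rotates the configuration to move the arc away from the obstruction. Once $i$ is fixed, \eqref{eq:qkz2} reads
$$\Psi_\pi = (q z_i - q^{-1} z_{i+1}) \partial_i \Psi_{\sigma_0} - \sum_{\nu \in e_i^{-1}(\sigma_0) \setminus \{\pi, \sigma_0\}} \Psi_\nu.$$
Because $i \notin \{j-1, j, j+1\}$, the Temperley-Lieb operator $e_i$ does not touch the arc $(j, j+1)$, so every $\nu$ in the sum also satisfies $j \matched{\nu} j+1$ with $|\lambda_\nu| < |\lambda_\pi|$, and both $\partial_i$ and multiplication by $q z_i - q^{-1} z_{i+1}$ commute with the substitution $z_j \mapsto q^{-1}, z_{j+1} \mapsto q$.

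Evaluating the recursion at the Dyck path $\sigma$ and applying the induction hypothesis term-by-term reduces the whole identity to the analogous qKZ recursion for $\Psi_{\hat{\pi}_j}$ at $\hat{\sigma}_j$ in $\noncn{n-1}$, scaled by the desired factor $3^{n-1}$. This reduction relies on exhibiting a natural bijection between $e_i^{-1}(\sigma_0)$ in $\noncn{n}$ and the corresponding preimage set in $\noncn{n-1}$ under the arc-removal map, and on checking that the divided difference of the base polynomial descends cleanly. In the vanishing case $j \not\matched{\sigma} j+1$, the inductive vanishing of each reduced evaluation on the right forces the entire sum to collapse to zero.

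The main obstacle is verifying that the qKZ recursion descends cleanly under the arc-removal operation $\pi \mapsto \hat{\pi}_j$, producing exactly the scalar factor $3^{n-1}$ with no stray contributions. This requires a careful combinatorial bijection for the Temperley-Lieb preimages on the two sides, together with an algebraic identity expressing $(q z_i - q^{-1} z_{i+1}) \partial_i \Psi_{\sigma_0}$, after the substitution $z_j = q^{-1}, z_{j+1} = q$, in terms of the analogous expression at the reduced matching in $\noncn{n-1}$. The edge case in which every inner corner of $\lambda_\pi$ sits adjacent to the arc $(j, j+1)$ is absorbed by rotating via \eqref{eq:qkz3}, but making this bookkeeping watertight is the technical heart of the argument.
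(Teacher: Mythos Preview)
The paper does not give its own proof of this theorem; it simply cites \cite[Section~4.2]{zinn-justin}. So there is no argument in the present paper to compare against, and your proposal must stand on its own.

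Your overall plan---induction on $|\lambda_\pi|$ via the qKZ recursion \eqref{eq:qkz2}---is natural, but there is a genuine gap at the divided-difference step. You propose to ``evaluate the recursion at the Dyck path $\sigma$ and apply the induction hypothesis term-by-term.'' The problem is that to evaluate $(qz_i-q^{-1}z_{i+1})\,\partial_i\Psi_{\sigma_0}$ at the point $(q^{-\sigma_1},\dots,q^{-\sigma_{2n}})$ you need the value of $\Psi_{\sigma_0}$ both at that point \emph{and} at the point with the $i$th and $(i{+}1)$st coordinates swapped. When $\sigma_i\neq\sigma_{i+1}$ the swapped sequence need not be a Dyck path (swapping a peak of height~$1$ creates a negative partial sum), so it is not a $\tau$-evaluation for any $\tau\in\noncn{n}$, and your evaluation-level induction hypothesis gives no information about it. Your observation that $\partial_i$ commutes with the substitution $z_j\mapsto q^{-1},\ z_{j+1}\mapsto q$ is correct but does not save the argument: after that partial substitution $\Psi_{\sigma_0}$ is \emph{not} a constant multiple of $\Psi_{\widehat{(\sigma_0)}_j}$---already for $\Psi_{\pi^n_{\textrm{min}}}$ one picks up an extra factor $\prod_{k\neq j,j+1}(\cdots)$ depending on the remaining variables. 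The same issue affects your treatment of the vanishing case $j\not\matched{\sigma}j+1$.

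The standard remedy (and what is done in \cite{zinn-justin}) is to prove a stronger polynomial-level statement: specialize only $z_{j+1}=q^2 z_j$, use the wheel condition to extract an explicit product $\prod_{k\neq j,j+1}(z_k-qz_j)$ from $\Psi_\pi$, and show that the quotient is a wheel polynomial of order $n-1$ which the qKZ recursion then identifies with $\Psi_{\hat\pi_j}$. With this strengthened inductive statement your scheme goes through cleanly, the divided-difference term causes no trouble (since one works with polynomials, not point evaluations), and both cases of \eqref{eq:qkz-eval-recursion} follow by specializing the remaining variables.
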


\begin{proof}
See \cite[Section 4.2]{zinn-justin}.
\end{proof}

\begin{cor}
\begin{enumerate}
\item For all $\pi,\sigma\in\noncn{n}$, we have
\begin{equation} \label{eq:psi-sigma-evaluations}
\Psi_\pi(\sigma) = \delta_{\pi,\sigma} = \begin{cases}1 & \textrm{if }\pi=\sigma, \\ 0 & \textrm{otherwise}. \end{cases}
\end{equation}
\item The qKZ polynomials $(\Psi_\pi)_{\pi\in\noncn{n}}$ are a basis for $\wheelpoly{n}$.
\item The $\pi$-evaluation linear functionals $(\operatorname{ev}_\pi)_{\pi\in\noncn{n}}$ are the basis of $(\wheelpoly{n})^*$ dual to $(\Psi_\pi)_{\pi\in\noncn{n}}$.
\item $\dim \wheelpoly{n} = \cat(n)$.
\end{enumerate}
\end{cor}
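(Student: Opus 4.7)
The plan is to establish part (1) first by induction on $n$; parts (2), (3), and (4) will then follow by a short linear algebra argument combining (1) with Theorem~\ref{thm:sufficient-evaluations}. Part (1) is where all the real work sits, but it is essentially set up for us by Theorem~\ref{thm:qkz-eval-recursion}.

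For part (1), I would induct on $n$. The base case $n=1$ is immediate: by \eqref{eq:qkz1}, the unique polynomial $\Psi_{\pi^1_{\textrm{min}}}$ is the constant $1$ (both products are empty), so $\Psi_\pi(\sigma)=1=\delta_{\pi,\sigma}$ for the unique matching. For the inductive step, given $\pi\in\noncn{n}$ with $n\ge 2$, pick any $j$ with $j\matched{\pi}j+1$; such a ``small arc'' always exists in a noncrossing matching. Theorem~\ref{thm:qkz-eval-recursion} then splits the evaluation into two cases. If $j\not\matched{\sigma}j+1$, then $\Psi_\pi(\sigma)=0$, and since $\pi$ and $\sigma$ differ at position $j$ we have $\pi\neq\sigma$, so $\delta_{\pi,\sigma}=0$ agrees. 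If $j\matched{\sigma}j+1$, the theorem reduces $\Psi_\pi(\sigma)$ to a scalar multiple of an evaluation of a qKZ polynomial of order $n-1$, which by the inductive hypothesis equals $\delta_{\hat\pi_j,\hat\sigma_j}$. Since the arc-removal operation is a bijection between matchings in $\noncn{n}$ with a small arc at $j$ and elements of $\noncn{n-1}$, we have $\hat\pi_j=\hat\sigma_j$ iff $\pi=\sigma$, so the evaluation is a scalar multiple of $\delta_{\pi,\sigma}$. One then only needs to verify that the scalar is $1$ on the diagonal, which is a direct check using the known value $\Psi_{\pi^n_{\textrm{min}}}(\pi^n_{\textrm{min}})=1$ (coming from the identity $(1-q^{-2})(q^2-1)=-3$) and the normalizations in Theorem~\ref{thm:qkz-eval-recursion}.

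Parts (2)--(4) then follow formally. By (1), the square matrix $\bigl(\Psi_\pi(\sigma)\bigr)_{\pi,\sigma\in\noncn{n}}$ is the identity, so the family $(\Psi_\pi)_{\pi\in\noncn{n}}$ is linearly independent in $\wheelpoly{n}$ (evaluate $\sum_\pi c_\pi\Psi_\pi=0$ at $\sigma$ to get $c_\sigma=0$), giving $\dim\wheelpoly{n}\ge\cat(n)$. On the other hand, Theorem~\ref{thm:sufficient-evaluations} says the $\cat(n)$ evaluation functionals $(\operatorname{ev}_\pi)_{\pi\in\noncn{n}}$ span $(\wheelpoly{n})^*$, so $\dim\wheelpoly{n}=\dim(\wheelpoly{n})^*\le\cat(n)$. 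Combining these gives $\dim\wheelpoly{n}=\cat(n)$, which is part (4); the $(\Psi_\pi)$ are therefore a basis, which is part (2); and the identity $\operatorname{ev}_\sigma(\Psi_\pi)=\delta_{\pi,\sigma}$ is precisely the statement that $(\operatorname{ev}_\pi)$ is the basis of $(\wheelpoly{n})^*$ dual to $(\Psi_\pi)$, which is part (3).

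I do not anticipate serious obstacles: the induction in (1) is powered entirely by Theorem~\ref{thm:qkz-eval-recursion}, and parts (2)--(4) are pure linear algebra. The only subtle point is the bookkeeping that the normalization constants $(-3)^{-\binom{n}{2}}$ in \eqref{eq:qkz1} combine correctly with the factors appearing in the recursion of Theorem~\ref{thm:qkz-eval-recursion} to produce exactly the Kronecker delta on the diagonal, rather than some nontrivial scalar; this is a straightforward check, facilitated by the fact that it suffices to verify it once, on the single evaluation $\Psi_{\pi^n_{\textrm{min}}}(\pi^n_{\textrm{min}})$, and then invoke the inductive bijection.
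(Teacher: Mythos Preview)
Your approach is essentially the same as the paper's: part~1 by induction using the recursion of Theorem~\ref{thm:qkz-eval-recursion}, and parts~2--4 as formal consequences of part~1 together with the dimension bound from Theorem~\ref{thm:sufficient-evaluations}. The paper's proof is a single sentence (``Part 1 follows by induction from \eqref{eq:qkz-eval-recursion}, and parts 2--4 are immediate from part 1''); you have simply unpacked that sentence, correctly identifying that the off-diagonal vanishing is immediate from the recursion and that the diagonal normalization $\Psi_\pi(\pi)=1$ needs the explicit check $\Psi_{\pi^n_{\textrm{min}}}(\pi^n_{\textrm{min}})=1$ via the identity $(1-q^{-2})(q^2-1)=-3$.
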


\begin{proof}
Part 1 follows by induction from \eqref{eq:qkz-eval-recursion}, and parts 2--4 are immediate from part 1.
\end{proof}

\begin{example}[{\cite[Section 4.3.1]{zinn-justin}}]
Let $p(\mathbf{z})=s_\lambda(z_1,\ldots,z_{2n})$, the Schur polynomial associated with the Young diagram $\lambda=(n-1,n-1,\ldots,2,2,1,1)$. That is, we have explicitly
\begin{equation} \label{eq:schur-double-staircase}
p(\mathbf{z}) = \left( \prod_{1\le i<j\le 2n} (z_j-z_i) \right)^{-1} \det \left( z_i^j \right)_{
\!\!\!\begin{array}{l} \scriptstyle 1\le i\le 2n, \\[-1ex]
\scriptstyle 0\le j\le 3n-2, \ j\equiv 0,1\textrm{ (mod $3$)}
\end{array}
}
\end{equation}
Then $p$ is homogeneous of degree $n(n-1)$, and the matrix whose determinant appears in \eqref{eq:schur-double-staircase} has the property that if we make the substitution $z_k=q^2 z_j = q^4 z_i$ for some $1\le i<j<k\le 2n$, the columns with index $i,j,k$ of the matrix become linearly dependent (take the linear combination with coefficients $1,q^2,q^4$), causing $p$ to be $0$. Thus, $p$ is a wheel polynomial.

Since $p$ is also a symmetric polynomial, it satisfies $p(\pi)=p(\pi')$ for any $\pi,\pi'\in \noncn{n}$. By Theorem~\ref{thm:sufficient-evaluations} it follows that $p$ is the unique symmetric wheel polynomial up to scalar multiplication. Note that this Schur function is also known to be the partition function of the six-vertex model with domain wall boundary condition at the ``combinatorial point,'' and played an important role in the study of the enumeration of alternating sign matrices; see \cite{okada, stroganov} and \cite[Section 2.5.6]{zinn-justin}.
\end{example}

\bigskip
Denote $\psi_\pi = \Psi_\pi(\mathbf{1})$ and $\boldsymbol{\psi}_n = (\psi_\pi)_{\pi\in\noncn{n}}$. The vector $\boldsymbol{\psi}_n$ is related to the connectivity pattern $\pistarn$ in cylindrical loop percolation, as explained in the following result.

\begin{thm} 
\label{thm:psi-mu-related}
The vector $\boldsymbol{\psi}_n$ is a solution of the linear system \eqref{eq:razumov-stroganov-linear-system}.
Consequently it is a scalar multiple of the probability distribution $\boldsymbol{\mu}_n$ of $\pistarn$. That is, for any $\pi\in\noncn{n}$ we have $ \psi_\pi = a_n \mu_\pi $, where $a_n = \sum_{\pi\in\noncn{n}} \psi_\pi$.
\end{thm}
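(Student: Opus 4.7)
The plan is to show that $\boldsymbol{\psi}_n$ satisfies the linear system \eqref{eq:razumov-stroganov-linear-system}; since $M_n = I - \frac{1}{2n}H_n$ is an irreducible Markov transition matrix on the finite set $\noncn{n}$, its space of left fixed vectors is one-dimensional and spanned by $\boldsymbol{\mu}_n$, so $\boldsymbol{\psi}_n$ must be a scalar multiple of $\boldsymbol{\mu}_n$, and summing $\psi_\pi = a\,\mu_\pi$ over $\pi$ then forces $a = a_n = \sum_\pi \psi_\pi$. Working in the Temperley-Lieb module with basis $(|\pi\rangle)_{\pi\in\noncn{n}}$ and setting $\Psi(\mathbf{z}) = \sum_\pi \Psi_\pi(\mathbf{z})|\pi\rangle$, the desired equation is equivalent to the operator identity $\sum_{k=1}^{2n} e_k\,\Psi(\mathbf{1}) = 2n\,\Psi(\mathbf{1})$ (where $e_k$ acts on basis vectors by $e_k|\pi\rangle = |e_k(\pi)\rangle$).

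The central technical step is to prove the polynomial identity
\begin{equation*}
e_j\,\Psi(\mathbf{z}) = \Psi(\mathbf{z}) + (qz_j - q^{-1}z_{j+1})\,\partial_j \Psi(\mathbf{z}), \qquad 1\le j\le 2n-1,
\end{equation*}
together with its cyclic analog for $j = 2n$ (which follows from the rotation invariance \eqref{eq:qkz3}). Componentwise, the identity reads $\sum_{\nu\in e_j^{-1}(\sigma)}\Psi_\nu(\mathbf{z}) = \Psi_\sigma(\mathbf{z}) + (qz_j - q^{-1}z_{j+1})\,\partial_j \Psi_\sigma(\mathbf{z})$ for each $\sigma\in\noncn{n}$. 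When $j\matched{\sigma}j+1$, this is a direct rearrangement of \eqref{eq:qkz2} using the decomposition of $e_j^{-1}(\sigma)$ provided by the lemma immediately following Lemma~\ref{lem:divided-difference-wheelpoly}. When $j$ and $j+1$ are not matched in $\sigma$, we have $e_j^{-1}(\sigma) = \emptyset$, and the identity reduces to the exchange relation $(qz_j - q^{-1}z_{j+1})\,s_j \Psi_\sigma = (qz_{j+1} - q^{-1}z_j)\,\Psi_\sigma$ (where $s_j$ swaps $z_j$ and $z_{j+1}$). I would establish this exchange relation by induction on the Young diagram $\lambda_\sigma$: the base case $\sigma = \pi^n_{\textrm{min}}$ is handled directly from the explicit factorization in \eqref{eq:qkz1}, where each non-matched position $j$ lies strictly inside one of the two blocks so that $s_j$ merely permutes the factors of the corresponding quantum Vandermonde while sending $(qz_j-q^{-1}z_{j+1})$ to $(qz_{j+1}-q^{-1}z_j)$; the inductive step proceeds through \eqref{eq:qkz2} using the Hecke-type commutation relations between $\partial_j$ and $s_j$.

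Granting the polynomial identity, I evaluate at $\mathbf{z} = \mathbf{1}$ and sum over $j = 1,\ldots,2n$:
\begin{equation*}
\sum_{j=1}^{2n} e_j\,\Psi(\mathbf{1}) = 2n\,\Psi(\mathbf{1}) + (q - q^{-1})\sum_{j=1}^{2n}(\partial_j \Psi)(\mathbf{1}).
\end{equation*}
For any polynomial $f$, $(\partial_j f)(\mathbf{1}) = \partial_{z_j}f|_{\mathbf{z}=\mathbf{1}} - \partial_{z_{j+1}}f|_{\mathbf{z}=\mathbf{1}}$, and so the residual sum telescopes cyclically to zero. Hence $\sum_j e_j\,\Psi(\mathbf{1}) = 2n\,\Psi(\mathbf{1})$, which in row-vector form is precisely $\boldsymbol{\psi}_n H_n = 0$, that is, $\boldsymbol{\psi}_n$ solves \eqref{eq:razumov-stroganov-linear-system}; combined with the uniqueness argument above, this concludes the proof.

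The main obstacle is the exchange identity in the case where $j$ and $j+1$ are not matched in $\sigma$: this is not a direct consequence of the wheel condition or the recursion in isolation, but rather a somewhat subtle manifestation of the underlying qKZ structure at the combinatorial point $q = e^{2\pi i/3}$. A cleaner alternative route would be to establish the full qKZ equation $\check{R}_j(z_j, z_{j+1})\,\Psi(\mathbf{z}) = s_j\Psi(\mathbf{z})$ for the appropriately normalized Temperley-Lieb $\check{R}$-matrix and then derive the desired identity by expanding $\check{R}_j = a(z_j,z_{j+1})\,I + b(z_j,z_{j+1})\,e_j$; this would handle the matched and unmatched cases uniformly and without any case analysis on the recursion \eqref{eq:qkz2}.
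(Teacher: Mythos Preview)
Your proposal is correct and follows exactly the approach in the reference the paper defers to (\cite[Sec.~4.3]{zinn-justin}): establish the qKZ-type identity $e_j\Psi(\mathbf{z}) = \Psi(\mathbf{z}) + (qz_j - q^{-1}z_{j+1})\partial_j\Psi(\mathbf{z})$ componentwise, evaluate at $\mathbf{z}=\mathbf{1}$, and observe that the divided-difference contributions telescope cyclically. The paper itself gives no independent argument, so there is nothing further to compare; you have correctly identified the one genuinely nontrivial step (the exchange relation when $j\notmatched{\sigma}j+1$) and the two standard ways to handle it, and your alternative via the full $\check{R}$-matrix form of the qKZ equation is indeed how the referenced source organizes the proof.

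One small omission worth noting: in the matched case $j\matched{\sigma}j+1$ you invoke \eqref{eq:qkz2}, but that recursion is only stated when a $\hat\pi$ with $\sigma\nearrow_j\hat\pi$ exists. When no such $\hat\pi$ exists (i.e., the added box would fall outside the staircase), the identity $\sum_{\nu\in e_j^{-1}(\sigma)}\Psi_\nu = \Psi_\sigma + (qz_j - q^{-1}z_{j+1})\partial_j\Psi_\sigma$ still holds but needs a separate check; this edge case is automatically absorbed if you go through the $\check{R}$-matrix formulation you mention at the end.
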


\begin{proof}
See \cite[Sec.~4.3]{zinn-justin}.
\end{proof}

We will see later (see Theorem~\ref{thm:sum-rule} below) that the normalization constant $a_n$ is equal to $\asm(n)$. Assuming this fact temporarily, Theorem~\ref{thm:psi-mu-related} implies the following interesting and purely algebraic characterization of the numbers $\mu_\pi = \prob(\pistarn = \pi)$, which does not seem to have been noted before.

\begin{thm} 
\label{thm:lin-alg-expansion}
For any wheel polynomial $p\in\wheelpoly{n}$, we have that $p(\mathbf{1}) = \asm(n) \sum_{\pi\in\noncn{n}} \mu_\pi p(\pi)$. In other words, the $\mathbf{1}$-evaluation functional $\operatorname{ev}_\mathbf{1}$ has the expansion
$$ \operatorname{ev}_\mathbf{1} = \asm(n) \sum_{\pi\in\noncn{n}} \mu_\pi \operatorname{ev}_\pi $$
as a linear combination of the $\pi$-evaluation functionals.
\end{thm}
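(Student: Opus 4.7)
The plan is to use the duality between the qKZ basis and the $\pi$-evaluation functionals established in the corollary to Theorem~\ref{thm:qkz-eval-recursion}, combined with the identification of $\boldsymbol{\psi}_n$ with a scalar multiple of $\boldsymbol{\mu}_n$ from Theorem~\ref{thm:psi-mu-related}.

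First, I would expand an arbitrary $p \in \wheelpoly{n}$ in the qKZ basis. Since $(\Psi_\pi)_{\pi\in\noncn{n}}$ is a basis of $\wheelpoly{n}$ with dual basis $(\operatorname{ev}_\pi)_{\pi\in\noncn{n}}$ by the duality $\Psi_\pi(\sigma) = \delta_{\pi,\sigma}$ in \eqref{eq:psi-sigma-evaluations}, we can write
\begin{equation*}
p = \sum_{\pi \in \noncn{n}} c_\pi \Psi_\pi
\end{equation*}
for some scalars $c_\pi$, and evaluating both sides at an arbitrary $\sigma\in\noncn{n}$ shows that $c_\sigma = p(\sigma)$. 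Therefore
\begin{equation*}
p = \sum_{\pi \in \noncn{n}} p(\pi)\, \Psi_\pi.
\end{equation*}

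Next, I would apply the $\mathbf{1}$-evaluation functional $\operatorname{ev}_{\mathbf{1}}$ to both sides, obtaining
\begin{equation*}
p(\mathbf{1}) = \sum_{\pi \in \noncn{n}} p(\pi)\, \Psi_\pi(\mathbf{1}) = \sum_{\pi \in \noncn{n}} \psi_\pi\, p(\pi).
\end{equation*}
By Theorem~\ref{thm:psi-mu-related}, $\psi_\pi = a_n \mu_\pi$ with $a_n = \sum_{\pi \in \noncn{n}} \psi_\pi$, so
\begin{equation*}
p(\mathbf{1}) = a_n \sum_{\pi \in \noncn{n}} \mu_\pi\, p(\pi).
\end{equation*}
The only remaining ingredient is the identification $a_n = \asm(n)$, which is stated in the excerpt as Theorem~\ref{thm:sum-rule} and may be assumed here. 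Substituting this yields the claimed identity, and re-reading the chain of equalities as a statement about linear functionals on $\wheelpoly{n}$ gives the expansion $\operatorname{ev}_{\mathbf{1}} = \asm(n) \sum_{\pi\in\noncn{n}} \mu_\pi \operatorname{ev}_\pi$.

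The only nontrivial step is invoking $a_n = \asm(n)$; everything else is formal bookkeeping using the biorthogonality $\Psi_\pi(\sigma) = \delta_{\pi,\sigma}$. The sum rule $a_n = \asm(n)$ is a genuinely deep result (equivalent to the Razumov–Stroganov-type identity \eqref{eq:asm-const-term}), but since it is listed as a subsequent theorem of the paper, the proof of Theorem~\ref{thm:lin-alg-expansion} itself reduces to the two-line computation above.
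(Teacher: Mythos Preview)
Your proof is correct and follows essentially the same route as the paper: expand $p$ in the qKZ basis, use the biorthogonality $\Psi_\pi(\sigma)=\delta_{\pi,\sigma}$ to identify the coefficients as $p(\pi)$, evaluate at $\mathbf{1}$, and invoke $\psi_\pi=\asm(n)\mu_\pi$ via Theorems~\ref{thm:psi-mu-related} and~\ref{thm:sum-rule}. The only cosmetic difference is that the paper identifies the expansion coefficients at the end rather than at the start.
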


\begin{proof}
If $p=\sum_{\pi\in\noncn{n}} b_\pi \Psi_\pi$ gives the expansion of $p$ in terms of the qKZ basis polynomials, then
$$ p(\mathbf{1}) = \sum_{\pi\in\noncn{n}} b_\pi \Psi_\pi(\mathbf{1}) = 
\sum_{\pi\in\noncn{n}} b_\pi \psi_\pi =
\asm(n) \sum_{\pi\in\noncn{n}} \mu_\pi b_\pi.
$$
On the other hand, taking the $\pi$-evaluation of the expansion $p=\sum_{\sigma\in\noncn{n}} b_\sigma \Psi_\sigma$ easily implies using \eqref{eq:psi-sigma-evaluations} that $b_\pi = p(\pi)$.
\end{proof}

A referee has pointed out to us that Theorem~\ref{thm:lin-alg-expansion} can also be deduced from some of the results (specifically, Theorem~2 and equations (7) and (8)) of \cite{de-gier-lascoux-sorrell}.

Next, we construct a second important family of wheel polynomials defined using contour integration.

\begin{defi}[Integral wheel polynomials]
Denote 
\begin{align*}
\mathcal{A}_n &= \big\{ \mathbf{a}=(a_1,\ldots,a_n)\,:\, 1\le a_1\le \ldots \le a_n \le 2n-1
\\ & \hspace{160pt} \textrm{and }a_j\le 2j-1 \textrm{ for all }j \big\}.
\end{align*}
(These sequences generalize the steps of increase of Dyck paths---compare with \eqref{eq:noncn-plus-def}.)
For a sequence $\mathbf{a}=(a_1,\ldots,a_n)\in\mathcal{A}_n$, denote
\begin{align}
&\Phi_{\mathbf{a}}(z_1,\ldots,z_{2n}) =
(-1)^{\binom{n}{2}} \prod_{1\le j<k\le 2n} (qz_j-q^{-1} z_k) 
\nonumber \\ & \hspace{30.0pt} \times \overbrace{\mathlarger{\oint}\ldots\mathlarger{\oint}}^{n}
\frac{\displaystyle \prod_{1\le j<k\le n} (w_k-w_j)(qw_j-q^{-1} w_k)}{\displaystyle \prod_{j=1}^n \left(
  \prod_{k=1}^{a_j} (w_j-z_k) \prod_{k=a_j+1}^{2n} (qw_j-q^{-1} z_k)
\right)} \prod_{j=1}^n \frac{dw_j}{2\pi i},
\label{eq:phi-a-def}
\end{align}
where the contour of integration for each variable $w_j$ surrounds in a counterclockwise direction the singularities at $z_k\ (1\le k\le a_j)$, but does not surround the singularities at $q^{-2} z_k\ (a_j+1\le k\le 2n)$. 
\end{defi}

We will be mostly interested in $\Phi_\mathbf{a}$ for $\mathbf{a}\in \noncn{n}^+$---the set of such polynomials will turn out to be a basis for $\wheelpoly{n}$---but the case when $\mathbf{a}$ is in the larger set $\mathcal{A}_n$ will play a useful intermediate role.

The above definition of $\Phi_\mathbf{a}$ is somewhat difficult to work with. In particular, it is not even obvious that the $\Phi_\mathbf{a}$'s are polynomials. However, the contour integrals can be evaluated using the residue formula, which gives a more concrete representation of the $\Phi_\mathbf{a}$'s that makes it possible to show that they are in fact wheel polynomials and prove additional properties.

\begin{lem}
\begin{enumerate}

\item
The functions $\Phi_\mathbf{a}$ can be written explicitly as follows.
\begin{align*}
& \hspace{-20.0pt}  \Phi_{\mathbf{a}}(\mathbf{z}) =
(-1)^{\binom{n}{2}} \prod_{1\le j<k\le 2n} (qz_j-q^{-1} z_k) \\ & 
\times \sum_{\tiny \begin{array}{c} (m_1,\ldots,m_n) \\ 1\le m_j\le a_j, m_j\neq m_k \end{array}} 
\frac{\displaystyle \prod_{1\le j<k\le n} (z_{m_k}-z_{m_j})(qz_{m_j}-q^{-1} z_{m_k})}{\displaystyle  \prod_{j=1}^n \left(
\prod_{\scriptsize \begin{array}{c} k=1 \\ k\neq a_j \end{array}}^{a_j} (z_{m_j}-z_k) \prod_{k=a_j+1}^{2n} (qz_{m_j}-q^{-1} z_k)
\right) }
\\[3pt] & \hspace{-20.0pt}
\ \ \ \ \ =
(-1)^{\binom{n}{2}}
\\[3pt] &\hspace{-10.0pt} \times \hspace{-15.0pt} \sum_{\tiny \begin{array}{c} M=(m_1,\ldots,m_n) \\ 1\le m_j\le a_j, m_j\neq m_k \end{array}}
\hspace{-15.0pt} (-1)^{\operatorname{inv}(M)}
\frac{\displaystyle \prod_{1\le j<k\le n} (q z_{m_j} - q^{-1} z_{m_k}) \hspace{-20pt} \prod_{\tiny \begin{array}{c}1\le j <k\le 2n\\ j\notin M\textnormal{ or }j=m_p, k\le a_p\end{array}} \hspace{-15.0pt} (qz_j-q^{-1} z_k) }
{\displaystyle \prod_{j=1}^n \prod_{\tiny \begin{array}{c} 1\le k\le a_j \\ k\notin M \textnormal{ or }k>m_j\end{array}} (z_{m_j}-z_k)},
\end{align*}
where for a sequence $M=(m_1,\ldots,m_n)$, $\operatorname{inv}(M)$ denotes the number of inversions of $M$, given by
$\operatorname{inv}(M) =\#\{ 1\le i<j\le n\,:\, m_i>m_j \}$.

\item
$\Phi_\mathbf{a}$ is a wheel polynomial of order $n$.

\end{enumerate}
\end{lem}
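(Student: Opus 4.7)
For part 1, I would evaluate the contour integral in \eqref{eq:phi-a-def} by iterated residues in $w_1, \ldots, w_n$. Each $w_j$-contour encloses only the simple poles at $w_j = z_k$ for $k = 1, \ldots, a_j$, so the residue theorem produces a sum over tuples $M = (m_1, \ldots, m_n)$ with $1 \le m_j \le a_j$; tuples with a repeated entry vanish because of the factor $\prod_{1 \le j<k \le n}(w_k - w_j)$ in the numerator. This yields the first explicit formula. To derive the second, I would cancel the denominator factors $(qz_{m_j} - q^{-1}z_k)$ for $k > a_j$ against the corresponding factors of the prefactor $\prod_{1 \le p<q\le 2n}(qz_p - q^{-1}z_q)$ (valid since $m_j \le a_j < k$ makes $(m_j, k)$ an increasing pair), leaving precisely the factors indexed by pairs $(p,q)$ with $p \notin M$ or $p = m_r$ and $q \le a_r$. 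I would then apply the antisymmetry identity
\begin{equation*}
\prod_{1\le j<k\le n}(z_{m_k} - z_{m_j}) = (-1)^{\operatorname{inv}(M)} \prod_{\substack{p,p'\in M \\ p < p'}}(z_{p'} - z_p),
\end{equation*}
using it to cancel the leftover denominator factors $(z_{m_j} - z_{m_{j'}})$ with $m_{j'} < m_j$ (these are always present in the first-form denominator since $m_{j'} < m_j \le a_j$ forces $m_{j'} \le a_j$) against the Vandermonde, introducing the sign $(-1)^{\operatorname{inv}(M)}$ and reducing the denominator to the form stated in the second formula.

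For part 2, the homogeneity claim follows by direct degree counting: the prefactor has degree $n(2n-1)$, each summand contributes $n(n-1)$ in the numerator and $n(2n-1)$ in the denominator, totaling $n(n-1)$. Polynomiality is cleanest from the integral presentation. Viewed as a function of the $z_k$'s with the $w_j$'s confined to their contours, the integrand has simple poles at $z_k = w_j$ (for $k \le a_j$) and at $z_k = q^2 w_j$ (for $k > a_j$), so the closed contour integrals can develop singularities in $\mathbf{z}$ only where an inside pole collides with an outside pole, i.e., on loci $\{z_\ell = q^2 z_k\}$ with $k < \ell$ and $k \le a_j < \ell$ for some $j$. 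Every such locus is a zero of the corresponding factor of the prefactor, so $\Phi_\mathbf{a}$ is a rational function of $\mathbf{z}$ with no poles and is therefore a polynomial.

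Verifying the wheel condition is the main obstacle I anticipate. Under the substitution $z_k = q^2 z_j = q^4 z_i$ for $i<j<k$, the prefactor vanishes to exactly order two: the factors $(qz_i - q^{-1}z_j)$ and $(qz_j - q^{-1}z_k)$ each contribute a simple zero, while $(qz_i - q^{-1}z_k)$ reduces to $(q - q^3)z_i = (q-1)z_i$, nonzero because $q^3 = 1$. Simultaneously the integral can acquire poles of matching order from contour pinches involving these same loci. To show the finite limit is actually zero, I would use the second explicit formula and argue term by term: a case analysis on the positions, if any, of $i$ and $j$ in $M$ shows that the product $\prod_P(qz_p - q^{-1}z_q)$ typically contains one of the vanishing factors $(qz_i - q^{-1}z_j)$ or $(qz_j - q^{-1}z_k)$, and in the remaining configurations (where $i = m_r$ and $j = m_{r'}$ with specific index constraints) an ``internal'' factor of the form $(qz_{m_s} - q^{-1}z_{m_{s'}})$ arising from $\prod_{1\le j<k\le n}(qz_{m_j} - q^{-1}z_{m_k})$ will vanish. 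The residual configurations, where $i, j \in M$ occur out of order so that no factor directly vanishes, I would attempt to cancel in pairs via a sign-reversing involution on the indexing tuples that swaps the positions of $i$ and $j$. Constructing this involution and verifying that it pairs terms with opposite signs is the technical step I expect to require the most care.
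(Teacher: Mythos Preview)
Your approach is essentially the one taken in the paper's cited reference (the paper itself only defers to \cite[Section~3]{zinn-justin-di-francesco2}): residue evaluation for part~1, then use of the second explicit formula for the wheel condition in part~2. Part~1 and the homogeneity/polynomiality arguments are fine.

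The one place you overestimate the difficulty is the wheel condition. No sign-reversing involution is needed; every term in the second formula vanishes outright under $z_k=q^2 z_j=q^4 z_i$. Your case split is right up to the ``residual'' configuration $i=m_r$, $j=m_s$ with $s<r$, but there the monotonicity of $\mathbf{a}\in\mathcal{A}_n$ saves you: since $s<r$ we have $j=m_s\le a_s\le a_r$, so the pair $(i,j)$ satisfies the condition ``$i=m_r$ and $j\le a_r$'' for membership in the product $P$, and the factor $(qz_i-q^{-1}z_j)$ is present in the numerator and vanishes. (The denominator, being a product of $q$-free differences $z_{m_p}-z_\ell$, does not vanish under the wheel specialization for generic $z_i$.) Thus the four cases
\[
i\notin M,\qquad i\in M,\ j\notin M,\qquad i=m_r,\ j=m_s,\ r<s,\qquad i=m_r,\ j=m_s,\ r>s
\]
are each handled by a single vanishing factor in the numerator, and the argument closes without any cancellation between terms.
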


\begin{proof} See \cite[Section 3]{zinn-justin-di-francesco2}.
\end{proof}

For each sequence $\mathbf{a}=(a_1,\ldots,a_n)\in\mathcal{A}_n$, let $(C_{\mathbf{a},\sigma})_{\sigma\in\noncn{n}}$ be coefficients such that
\begin{equation} \label{eq:def-c-a-sigma}
\Phi_\mathbf{a} = \sum_{\sigma\in\noncn{n}} C_{\mathbf{a},\sigma} \Psi_{\sigma}.
\end{equation}
For $\pi\in\noncn{n}$ denote $C_{\pi,\sigma} = C_{\mathbf{a},\sigma}$ with $\mathbf{a}=\pi^+$. The coefficients $(C_{\pi,\sigma})_{\sigma\in\noncn{n}}$ have the property that
\begin{equation} \label{eq:def-c-pi-sigma}
\Phi_\pi = \sum_{\sigma\in\noncn{n}} C_{\pi,\sigma} \Psi_{\sigma}.
\end{equation}

\begin{lem} We have the relations
\begin{align*} 
\Phi_\mathbf{a}(\sigma) &= C_{\mathbf{a},\sigma} \qquad (\mathbf{a}\in\mathcal{A}_n), \\
\Phi_\pi(\sigma) &= C_{\pi,\sigma} \qquad (\pi\in\noncn{n}).
\end{align*}
\end{lem}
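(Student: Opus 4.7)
The plan is to observe that this lemma is an immediate consequence of the dual basis relation \eqref{eq:psi-sigma-evaluations} combined with the fact, established in the preceding lemma, that $\Phi_{\mathbf{a}} \in \wheelpoly{n}$ for every $\mathbf{a} \in \mathcal{A}_n$. The argument requires essentially no computation; the entire content is already packaged in previously stated results.

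First I would note that since $(\Psi_\pi)_{\pi\in\noncn{n}}$ forms a basis of $\wheelpoly{n}$ and $\Phi_\mathbf{a} \in \wheelpoly{n}$, the expansion \eqref{eq:def-c-a-sigma} is well-defined and the coefficients $C_{\mathbf{a},\sigma}$ are uniquely determined. Applying the evaluation functional $\operatorname{ev}_\sigma$ to both sides of \eqref{eq:def-c-a-sigma} and invoking \eqref{eq:psi-sigma-evaluations}, we obtain
\begin{equation*}
\Phi_\mathbf{a}(\sigma) = \sum_{\tau\in\noncn{n}} C_{\mathbf{a},\tau} \Psi_\tau(\sigma) = \sum_{\tau\in\noncn{n}} C_{\mathbf{a},\tau} \delta_{\tau,\sigma} = C_{\mathbf{a},\sigma},
\end{equation*}
which is the first claimed identity.

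For the second identity, the key observation is that by the definition given just before the lemma, $C_{\pi,\sigma}$ is simply shorthand for $C_{\mathbf{a},\sigma}$ with $\mathbf{a} = \pi^+ \in \noncn{n}^+ \subset \mathcal{A}_n$. Hence the second identity is just the special case $\mathbf{a} = \pi^+$ of the first, and no further argument is required.

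Because the proof reduces to a one-line application of duality, there is no real obstacle; the nontrivial content lies entirely in the preceding results, namely the wheel-polynomial property of $\Phi_\mathbf{a}$ and the duality statement \eqref{eq:psi-sigma-evaluations}. In the write-up I would simply present the two-line computation above and explicitly cite these two facts so that the reader sees the lemma as a direct corollary rather than a new result.
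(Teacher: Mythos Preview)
Your proof is correct and matches the paper's own argument exactly: the paper simply says the lemma follows immediately from \eqref{eq:psi-sigma-evaluations} by taking the $\sigma$-evaluation of both sides of \eqref{eq:def-c-a-sigma}--\eqref{eq:def-c-pi-sigma}. There is nothing to add.
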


\begin{proof}
This follows immediately from \eqref{eq:psi-sigma-evaluations} by considering the $\pi$-evaluation of both sides of \eqref{eq:def-c-a-sigma}--\eqref{eq:def-c-pi-sigma}.
\end{proof}

The next result is a recurrence relation that makes it possible to recursively compute the quantities $\Phi_\mathbf{a}(\sigma) = C_{\mathbf{a},\sigma}$, analogously to Theorem~\ref{thm:qkz-eval-recursion}.

\begin{thm}
If $\sigma\in\noncn{n}$ has a ``little arc'' $j\matched{\sigma}j+1$ and $\mathbf{a} \in\mathcal{A}_n$, let $p$ be the number of times $j$ appears in $\mathbf{a}$, so that we can write
$$ \mathbf{a} = (a_1,\ldots,a_k,\overbrace{j,\ldots,j}^{p\textrm{ times}},a_{k+p+1},\ldots,a_n)
$$
where $a_k<j<a_{k+p+1}$. In the case when $p>0$, denote
\begin{equation}
\label{eq:hat-a-def}
\hat{\mathbf{a}}_j = (a_1,\ldots,a_k,\overbrace{j,\ldots,j}^{p-1\textrm{ times}},a_{k+p+1}-2,\ldots,a_n-2).
\end{equation}
Then we have
\begin{equation}
\label{eq:phi-a-recurrence}
\Phi_{\mathbf{a}}(\sigma) = \begin{cases}
3^{n-1} \chi(p) \Phi_{\hat{\mathbf{a}}_j}(\hat{\sigma}_j)
& \textrm{if }p>0 \ \ (\textrm{i.e., if }j\in\{a_1,\ldots,a_n\}), \\
0 & \textrm{if }p=0,
\end{cases}
\end{equation}
where
\begin{equation} \label{eq:character-mod-three}
\chi(p) = \frac{\displaystyle q^p-q^{-p}}{\displaystyle  q-q^{-1}} = \begin{cases}
0 & \textnormal{if }p \equiv 0 \textnormal{ (mod }3), \\
1 & \textnormal{if }p \equiv 1 \textnormal{ (mod }3), \\
-1 & \textnormal{if }p\equiv 2 \textnormal{ (mod }3).
\end{cases}
\end{equation}
\end{thm}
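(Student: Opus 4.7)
The plan is to evaluate the contour integral \eqref{eq:phi-a-def} defining $\Phi_{\mathbf{a}}$ directly at the specialisation $\mathbf{z}=(q^{-\sigma_1},\ldots,q^{-\sigma_{2n}})$ and to track how the contour interacts with this specialisation. Since $j\matched{\sigma}j+1$, we have $z_j=q^{-1}$ and $z_{j+1}=q$, so $z_{j+1}=q^2 z_j$. The central observation is that at this point the factor $qz_j-q^{-1}z_{j+1}$ appearing in the prefactor $\prod_{1\le j'<k\le 2n}(qz_{j'}-q^{-1}z_k)$ vanishes; any nonzero contribution must therefore come from a compensating singularity of the multiple contour integral.

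For each $w_{j'}$ the contour separates the inside singularities $\{z_k:k\le a_{j'}\}$ from the excluded ones $\{q^{-2}z_k:k>a_{j'}\}$. At the evaluation point, $z_j$ and $q^{-2}z_{j+1}$ both equal $q^{-1}$, so a genuine pinching of the contour occurs precisely when one of the two is inside and the other outside, i.e., when $a_{j'}=j$. In the case $p=0$ no such index $j'$ exists, the integral remains analytic in $\mathbf{z}$ in a neighbourhood of the evaluation point, and the vanishing prefactor forces $\Phi_{\mathbf{a}}(\sigma)=0$, which is the first clause of \eqref{eq:phi-a-recurrence}.

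For $p>0$ I would regularise by writing $z_{j+1}=q^2 z_j+\epsilon$ and taking $\epsilon\to 0$. For each of the $p$ pinched variables $w_{j'}$ (the indices with $a_{j'}=j$), shrink the contour to enclose $z_j$ alone; the residue at $w_{j'}=z_j$ produces a factor $(qz_j-q^{-1}z_{j+1})^{-1}\sim \epsilon^{-1}$ from the denominator factor $(qw_{j'}-q^{-1}z_{j+1})^{-1}$ of the integrand. The resulting naive divergence of order $\epsilon^{-p}$, combined with the prefactor zero of order $\epsilon$, apparently leaves an $\epsilon^{-(p-1)}$ singularity. This is tamed by the $w$-Vandermonde $\prod_{r<s}(w_s-w_r)(qw_r-q^{-1}w_s)$ in the numerator, which vanishes to order $p$ when all $p$ pinched $w_{j'}$'s collapse to $z_j$; expanding to the correct order produces as surviving coefficient the quantum integer $\chi(p)=(q^p-q^{-p})/(q-q^{-1})$. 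After stripping the finished factors, the remaining contour integral in the $n-p$ un-pinched variables, with $j$ and $j+1$ deleted from the ambient list, is visibly the one defining $\Phi_{\hat{\mathbf{a}}_j}$ at $\hat{\sigma}_j$—the shift $a_{j''}\mapsto a_{j''}-2$ for $j''>k+p$ records exactly this deletion. The overall $3^{n-1}$ prefactor comes from evaluating the surviving piece of $\prod_{j'<k}(qz_{j'}-q^{-1}z_k)$ against the remaining $\sigma$-values, using $(q-q^{-1})^2=-3$ for $q=e^{2\pi i/3}$.

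The main obstacle will be the combinatorial identification of the factor $\chi(p)$: one must show that symmetrising the residue computation over the $p!$ orderings in which the pinched variables collapse to $z_j$ yields precisely the quantum integer $\chi(p)$ rather than, say, a $q$-factorial or $p!$. The cleanest route is a direct Taylor expansion of the $w$-Vandermonde about the coincident point followed by isolation of the lowest-order antisymmetric piece. Structurally the whole argument mirrors the proof of the analogous $\Psi$-recursion (Theorem~\ref{thm:qkz-eval-recursion}) given in \cite{zinn-justin}, and the present theorem is its natural companion statement for the integral basis $\{\Phi_{\mathbf{a}}\}$.
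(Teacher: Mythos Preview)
Your overall strategy---specialise the contour integral \eqref{eq:phi-a-def} at $z_j=q^{-1},\ z_{j+1}=q$ and extract the finite part by analysing the pinching of contours---is exactly the approach of \cite[Appendix~A]{zinn-justin-di-francesco2}, which is where the paper delegates this proof. So the route is the right one.

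However, your power-counting in $\epsilon$ is wrong, and as stated the argument would not close. When all $p$ pinched variables collapse to $z_j$, the $w$-Vandermonde $\prod_{r<s}(w_s-w_r)(qw_r-q^{-1}w_s)$ vanishes to order $\binom{p}{2}$, not to order $p$: only the $(w_s-w_r)$ factors vanish at coincidence, and there are $\binom{p}{2}$ of them. More to the point, the integral does not diverge like $\epsilon^{-p}$. If you decompose each pinched $w_r$-integral as (residue at $z_j$) plus (integral over a contour deformed to exclude $z_j$), then every term in the resulting expansion with two or more residues taken at $z_j$ is killed outright by that same Vandermonde; so the singular part of the full integral is only $O(\epsilon^{-1})$, precisely matching the single prefactor zero. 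The ``Taylor expansion of the Vandermonde about the coincident point'' is therefore not the mechanism that produces~$\chi(p)$.

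What actually happens: once a single residue $w_{j'}=z_j$ is taken, the Vandermonde factors $(z_j-w_r)$ linking that frozen variable to each of the remaining $p-1$ pinched $w_r$'s cancel their denominator poles $(w_r-z_j)^{-1}$, so those contours are no longer pinched and can be pushed away from $z_j$. The surviving ratios of the form $(qw_r-q^{-1}z_j)/(qw_r-q^{-1}z_{j+1})$, together with the sum over the $p$ choices of which variable to freeze, are what combine to give the quantum integer $\chi(p)$; the careful bookkeeping of these ratios and of the prefactor contributions yielding $3^{n-1}$ is the real content of the cited argument. Your identification of the $3^{n-1}$ via $(q-q^{-1})^2=-3$ is correct, but the extraction of $\chi(p)$ needs to be redone along these lines rather than via the order-$p$ expansion you propose.
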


\begin{proof}
See \cite[Appendix A]{zinn-justin-di-francesco2}.
\end{proof}

Note that if $\mathbf{a}=\pi^+ \in \noncn{n}^+$ then $\hat{\mathbf{a}}_j$ is in $\mathcal{A}_{n-1}$ but not necessarily in $\noncn{n-1}^+$, which is the reason why it was necessary to consider the family of polynomials indexed by the larger set $\mathcal{A}_n$ even though we are mainly interested in the polynomials $\Phi_\pi$. 

Like \eqref{eq:qkz-eval-recursion}, the recurrence relation \eqref{eq:phi-a-recurrence} can be solved to give an explicit formula for $C_{\mathbf{a},\sigma}$.

\begin{thm}
For $\mathbf{a}=(a_1,\ldots,a_n)\in\noncn{n}^+$ and $\sigma\in\noncn{n}$, the coefficient $C_{\mathbf{a},\sigma}$ is given by:
\begin{equation}
C_{\mathbf{a},\sigma} = \prod_{j<k,\ j\matched{\sigma}k} \chi(p_{j,k}(\mathbf{a})),
\label{eq:cmatrix-explicit}
\end{equation}
where we denote $p_{j,k}(\mathbf{a}) = \#\{ m\,:\, j\le a_m <k \}-\tfrac12 (k-j-1)$.
\end{thm}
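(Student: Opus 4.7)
The plan is to prove \eqref{eq:cmatrix-explicit} by induction on $n$, iterating the recurrence \eqref{eq:phi-a-recurrence} at a little arc of $\sigma$ and collapsing one arc of $\sigma$ per step. A technical point is that the reduced sequence $\hat{\mathbf{a}}_j$ from \eqref{eq:hat-a-def} generically only lies in the larger set $\mathcal{A}_{n-1}$ rather than in $\noncn{n-1}^+$ (strict distinctness of entries may fail after the downshift by two), so the induction should be run on the stronger statement that \eqref{eq:cmatrix-explicit} holds for every $\mathbf{a}\in\mathcal{A}_n$; the theorem then falls out as the specialization to $\mathbf{a}\in\noncn{n}^+$. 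The base case $n=0$ is the empty-product identity $1=1$.

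For the inductive step, I would fix $\sigma\in\noncn{n}$, pick any little arc $j\matched{\sigma}j+1$, and let $p$ denote the multiplicity of $j$ in $\mathbf{a}$. Directly from the definition, $p_{j,j+1}(\mathbf{a})=p$ (the subtracted half-integer vanishes), so the little arc contributes exactly the factor $\chi(p)$ on the right-hand side of \eqref{eq:cmatrix-explicit}. When $p=0$, both sides vanish simultaneously: the left-hand side by \eqref{eq:phi-a-recurrence}, and the right-hand side because $\chi(0)=0$ appears as a factor in the product. When $p>0$, the recurrence \eqref{eq:phi-a-recurrence} reduces $C_{\mathbf{a},\sigma}$ to $\chi(p)$ times $C_{\hat{\mathbf{a}}_j,\hat{\sigma}_j}$, and applying the inductive hypothesis to $(\hat{\mathbf{a}}_j,\hat{\sigma}_j)$ leaves the task of matching the $n-1$ surviving $\chi$-factors in \eqref{eq:cmatrix-explicit} (indexed by the non-little arcs of $\sigma$) with those produced for $\hat{\sigma}_j$.

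The remaining arcs of $\sigma$ split into three positional cases relative to $(j,j+1)$: entirely to the left ($k<j$), entirely to the right ($i>j+1$), or straddling the little arc ($i<j$ and $k>j+1$). Under the bijection with the arcs of $\hat{\sigma}_j$ these are relabelled as $(i,k)$, $(i-2,k-2)$, and $(i,k-2)$ respectively. In each case one checks the identity $p_{i,k}(\mathbf{a})=p_{i',k'}(\hat{\mathbf{a}}_j)$ by direct count, tracking both the removal of one copy of $j$ from $\mathbf{a}$ and the downshift by $2$ of all entries strictly greater than $j$. The left and right cases are routine bookkeeping: none of the entries of $\mathbf{a}$ contributing to the relevant count is affected in the first case, and all of them undergo the same shift in the second, so the count and the half-integer correction each transform compatibly.

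The main obstacle will be the straddling case, where one must balance the loss of one entry of $\mathbf{a}$ equal to $j$ (which decreases $\#\{m:i\le\hat a_{j,m}<k-2\}$ by $1$ relative to $\#\{m:i\le a_m<k\}$) against the change $\tfrac12(k-i-1)\mapsto \tfrac12((k-2)-i-1)=\tfrac12(k-i-1)-1$ in the subtracted half-integer; simultaneously, entries of $\mathbf{a}$ equal to $j+1$ or $j+2$ get mapped to $j-1$ or $j$ after the shift, and one must verify that they are correctly absorbed into the shifted counting interval $[i,k-2)$. Verifying that all of these effects combine to yield exactly $p_{i,k}(\mathbf{a})=p_{i,k-2}(\hat{\mathbf{a}}_j)$ is the combinatorial crux of the argument. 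Once this cancellation is in place, multiplying the matched factors together with $\chi(p)$ for the little arc recovers the full product over all arcs of $\sigma$, closing the induction.
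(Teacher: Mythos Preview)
Your inductive approach is correct and is the natural way to derive \eqref{eq:cmatrix-explicit} from the recurrence \eqref{eq:phi-a-recurrence}; the paper does not give its own proof here but defers to \cite{zinn-justin-di-francesco2}, Appendix~A, where precisely this induction (strengthened to all $\mathbf{a}\in\mathcal{A}_n$, peeling off one little arc of $\sigma$ at a time) is carried out. The three-way positional split of the surviving arcs and the bookkeeping verification that $p_{i,k}(\mathbf{a})=p_{i',k'}(\hat{\mathbf{a}}_j)$ in each case are exactly the required ingredients.
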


\begin{proof}
See \cite[Appendix A]{zinn-justin-di-francesco2}.
\end{proof}

\begin{thm}
The matrix $(C_{\pi,\sigma})_{\pi,\sigma\in\noncn{n}}$ is triangular with respect to the partial order $\preceq$ on link patterns, and has $1$s on the diagonal (i.e., $C_{\pi,\pi}=1$ for all $\pi\in\noncn{n}$), hence it is invertible.
\end{thm}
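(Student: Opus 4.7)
The claim consists of two parts: the diagonal identity $C_{\pi,\pi}=1$ and the triangularity $C_{\pi,\sigma}=0$ whenever $\sigma\not\preceq\pi$.

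For the diagonal, I would apply the explicit formula \eqref{eq:cmatrix-explicit} with $\mathbf{a}=\pi^+$ directly. For any arc $j\matched{\pi}k$ of $\pi$ (with $j<k$), the noncrossing property forces the $k-j-1$ positions strictly between $j$ and $k$ to be paired among themselves by $\pi$, and hence exactly $(k-j-1)/2$ of them are openings of $\pi$. Together with the opening at $j$ itself, this gives $\#\{m : j\le \pi^+_m < k\} = 1+(k-j-1)/2$, and therefore $p_{j,k}(\pi^+)=1$. Since $\chi(1)=1$, every factor in the product equals $1$ and $C_{\pi,\pi}=\prod_{j\matched{\pi}k}\chi(1)=1$.

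For the triangularity, my plan is to proceed by strong induction on $n$ using the recurrence \eqref{eq:phi-a-recurrence}. Assume $\sigma\not\preceq\pi$ in $\noncn{n}$ and pick any little arc $j\matched{\sigma}j+1$ of $\sigma$. If $j\notin\pi^+$, then $p=0$ in the recurrence and $C_{\pi,\sigma}=\Phi_{\pi^+}(\sigma)=0$ immediately. Otherwise $p=1$ (since entries of $\pi^+$ are distinct) and $\chi(1)=1$, so the recurrence gives $C_{\pi,\sigma}=3^{n-1}\,\Phi_{\hat{\pi^+}_j}(\hat\sigma_j)$. In the favorable subcase $\pi_{j+1}=-1$, i.e.\ when $\pi$ itself has a little arc at $j-(j+1)$, one checks (by verifying $a_{\ell+1}-a_{\ell-1}\ge 3$) that $\hat{\pi^+}_j=\hat\pi_j^+\in\noncn{n-1}^+$; a direct comparison of Dyck paths, with the UD pair at positions $j,j+1$ deleted from each, shows that $\sigma\not\preceq\pi$ still implies $\hat\sigma_j\not\preceq\hat\pi_j$ (every witness position $m_0$ of $h_\sigma(m_0)<h_\pi(m_0)$ maps, possibly after the shift $m_0\mapsto m_0-2$ or reduction to $m_0=j-1$, to a witness in the shorter paths). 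The induction hypothesis applied at $n-1$ then forces $C_{\hat\pi_j,\hat\sigma_j}=0$, completing the case.

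The main obstacle is the remaining subcase $\pi_j=\pi_{j+1}=+1$: here the reduced sequence $\hat{\pi^+}_j$ has a repeated entry at $j-1$ and lies in $\mathcal A_{n-1}\setminus\noncn{n-1}^+$, so the induction hypothesis as stated does not directly apply. My plan for this case is to strengthen the inductive statement to cover $\Phi_{\mathbf a}(\sigma)$ for all $\mathbf a\in\mathcal A_n$, under a combinatorial condition on $(\mathbf a,\sigma)$ that (i) specializes to $\sigma\preceq\pi$ when $\mathbf a=\pi^+$, and (ii) is preserved by the reduction step $(\mathbf a,\sigma)\mapsto(\hat{\mathbf a}_j,\hat\sigma_j)$ whenever $\chi(p)\ne 0$. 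A cleaner alternative is to bypass the recurrence and argue combinatorially from \eqref{eq:cmatrix-explicit}: writing $p_{j,k}(\pi^+)=(h_\pi(k-1)-h_\pi(j-1)+1)/2$ for any arc $j\matched{\sigma}k$ of $\sigma$, the vanishing $\chi(p_{j,k}(\pi^+))=0$ is equivalent to the height difference $h_\pi(k-1)-h_\pi(j-1)$ being $\equiv -1\pmod 6$. One then needs to exhibit, for any pair with $\sigma\not\preceq\pi$, at least one arc of $\sigma$ producing such a difference. The cleanest such configuration is a $\sigma$-arc whose endpoints sandwich a net drop of exactly $1$ in $\pi$'s Dyck path (so $p=0$), and the technical heart of the argument is a careful case analysis, anchored at a well-chosen box of the non-containment $\lambda_\sigma\setminus\lambda_\pi$, that guarantees the existence of such an arc in general. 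I expect this combinatorial step to be where the bulk of the work lies.
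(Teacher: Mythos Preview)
The paper itself does not give a proof of this theorem; it merely cites \cite[Sec.~3.3.2]{fonseca-zinn-justin}. So there is no in-paper argument to compare against, and your proposal is in effect an attempt to reconstruct that proof from the tools assembled in Section~\ref{sec:wheel-polynomials-background}.

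Your treatment of the diagonal is complete and correct: for an arc $j\matched{\pi}k$ of $\pi$ the noncrossing condition forces exactly $(k-j-1)/2$ openings strictly between $j$ and $k$, so together with the opening at $j$ itself one gets $p_{j,k}(\pi^+)=1$ and hence $\chi(1)=1$ in every factor of \eqref{eq:cmatrix-explicit}. (One small comment: your sentence ``$\pi_{j+1}=-1$, i.e.\ when $\pi$ itself has a little arc at $j$--$(j{+}1)$'' is in fact a genuine implication, not just a definition: in a noncrossing matching $\pi_j=+1$ and $\pi_{j+1}=-1$ do force $j\matched{\pi}j{+}1$, since $j{+}1$ must close the most recent opening. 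So the favourable subcase is stated correctly.)

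For triangularity, however, what you have is an outline with the main difficulty correctly isolated but not resolved. Your induction via \eqref{eq:phi-a-recurrence} handles the two easy configurations at a chosen little arc $(j,j{+}1)$ of $\sigma$ (either $\pi_j=-1$, giving $p=0$ directly, or $\pi_j=+1,\pi_{j+1}=-1$, where $\widehat{\pi^+}_j=(\hat\pi_j)^+$ and the order relation $\hat\sigma_j\not\preceq\hat\pi_j$ is preserved, as you argue). But the case $\pi_j=\pi_{j+1}=+1$, where the reduced sequence acquires a repeated entry and falls outside $\noncn{n-1}^+$, is left open. You propose two routes---strengthening the induction to all of $\mathcal{A}_n$ under a suitable invariant, or a direct combinatorial argument from \eqref{eq:cmatrix-explicit} locating an arc of $\sigma$ with $p_{j,k}(\pi^+)\equiv 0\pmod 3$---but carry out neither. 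In particular, for the second route you would need to prove that whenever $\sigma\not\preceq\pi$ some arc of $\sigma$ produces such a vanishing factor; the ``cleanest configuration'' you mention (an arc with $p_{j,k}=0$ exactly, i.e.\ $h_\pi(k-1)-h_\pi(j-1)=-1$) does occur in all small cases, but you have not shown it must always occur, and the ``careful case analysis'' you allude to is where the actual content of the proof would live.

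As it stands, then, the proposal proves the diagonal statement and gives a credible plan for triangularity, but is not a proof. To complete it you should either finish one of the two routes you sketch (a third option worth trying: show that when $\sigma\not\preceq\pi$ one can always \emph{choose} the little arc of $\sigma$ so as to avoid the bad subcase, which would let your induction go through without strengthening) or consult the argument in the cited reference.
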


\begin{proof} See \cite[Sec.~3.3.2]{fonseca-zinn-justin}.
\end{proof}

\begin{cor}
$(\Phi_\pi)_{\pi\in \noncn{n}}$ form a basis for $\wheelpoly{n}$.
\end{cor}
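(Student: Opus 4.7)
The plan is to derive the corollary directly from the two facts just established: that $(\Psi_\sigma)_{\sigma \in \noncn{n}}$ is already known to be a basis of $\wheelpoly{n}$ (from the corollary following Theorem~\ref{thm:qkz-poly-properties}), and that the transition matrix $(C_{\pi,\sigma})_{\pi,\sigma\in\noncn{n}}$ between the two families, defined by the expansion \eqref{eq:def-c-pi-sigma}, is invertible by the theorem immediately preceding this corollary.

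Concretely, I would argue as follows. Each $\Phi_\pi$ lies in $\wheelpoly{n}$, and by \eqref{eq:def-c-pi-sigma} we have $\Phi_\pi = \sum_{\sigma \in \noncn{n}} C_{\pi,\sigma} \Psi_\sigma$. Reading this as a matrix equation, the column vector of $\Phi_\pi$'s is obtained from the column vector of $\Psi_\sigma$'s by applying the matrix $C = (C_{\pi,\sigma})$. Since $C$ is triangular with respect to $\preceq$ with $1$'s on the diagonal, it is invertible, so we can write each $\Psi_\sigma$ as a linear combination of the $\Phi_\pi$'s with coefficients given by $C^{-1}$. Therefore $(\Phi_\pi)_{\pi \in \noncn{n}}$ spans $\wheelpoly{n}$, and since it has cardinality $\cat(n) = \dim \wheelpoly{n}$, it is a basis.

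There is no real obstacle here — the content of the statement is entirely absorbed by the triangularity theorem for $C$, and what remains is the standard linear-algebra observation that applying an invertible change-of-basis matrix to one basis yields another basis. The proof is essentially one sentence.
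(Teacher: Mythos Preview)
Your argument is correct and is exactly the reasoning the paper intends: the corollary is stated without proof precisely because it follows immediately from the invertibility of the triangular matrix $(C_{\pi,\sigma})$ together with the fact that $(\Psi_\sigma)_{\sigma\in\noncn{n}}$ is a basis. One small correction: the basis property of the $\Psi_\sigma$'s is established in the corollary following Theorem~\ref{thm:qkz-eval-recursion}, not Theorem~\ref{thm:qkz-poly-properties}.
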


Denote the coefficients of the inverse matrix of $(C_{\pi,\sigma})_{\pi,\sigma\in\noncn{n}}$ by $\tilde{C}_{\pi,\sigma}$, or $\tilde{C}_{\pi,\mathbf{a}}$ if $\mathbf{a}=\sigma^+$. 
In other words, the coefficients $\tilde{C}_{\pi,\sigma}$ are defined by the relation
$$
\Psi_\pi = \sum_{\sigma\in\noncn{n}} \tilde{C}_{\pi,\sigma} \Phi_\sigma.
$$

We have defined two square matrices $\mathbf{C}_n = (C_{\pi,\sigma})_{\pi,\sigma\in\noncn{n}}$ and $\tilde{\mathbf{C}}_n=\mathbf{C}_n^{-1}=(\tilde{C}_{\pi,\sigma})_{\pi,\sigma\in\noncn{n}}$, which are the change of basis matrices transitioning between the two wheel polynomial bases we defined. 
Note that $\tilde{C}_{\pi,\sigma}$, like $C_{\pi,\sigma}$, are integers, but there does not seem to be a simple formula for them. The matrices $\mathbf{C}_n$ and $\tilde{\mathbf{C}}_n$ for $n=2,3,4$ are shown in Table~\ref{table:transition-matrices}.
\begin{table}[h]
$$
\begin{array}{c|cc}
n & \mathbf{C}_n & \tilde{\mathbf{C}}_n \\
\hline \\[-1ex]
2 & \begin{pmatrix} 1&0\\0&1 \end{pmatrix} & \begin{pmatrix} 1&0\\0&1 \end{pmatrix}
\\[12pt]
3 &
\begin{pmatrix} 1&0&0&0&0\\0&1&0&1&0\\0&0&1&0&0\\0&0&0&1&0\\0&0&0&0&1
\end{pmatrix}
&
\begin{pmatrix} 1&0&0&0&0\\0&1&0&-1&0\\0&0&1&0&0\\0&0&0&1&0\\0&0&0&0&1
\end{pmatrix}
\\[35pt]
4 &
\left(
\begin{smallmatrix}
 1 & 0 & 0 & 0 & 0 & 0 & 0 & 0 & 0 & 0 & 0 & 0 & 0 & 0 \\
 0 & 1 & 0 & 0 & 1 & 0 & 0 & 0 & 0 & 0 & 0 & 0 & 1 & 0 \\
 0 & 0 & 1 & 0 & 0 & 0 & 0 & 0 & 1 & 0 & 0 & 0 & 0 & 0 \\
 0 & 0 & 0 & 1 & 0 & 0 & 0 & 0 & 0 & 0 & 0 & 1 & 0 & 1 \\
 0 & 0 & 0 & 0 & 1 & 0 & 0 & 0 & 0 & 0 & 0 & 0 & 1 & -1 \\
 0 & 0 & 0 & 0 & 0 & 1 & 0 & 0 & 0 & 0 & 0 & 0 & 0 & 0 \\
 0 & 0 & 0 & 0 & 0 & 0 & 1 & 0 & 0 & 0 & 1 & 0 & 0 & 0 \\
 0 & 0 & 0 & 0 & 0 & 0 & 0 & 1 & 0 & 0 & 0 & 0 & 0 & 0 \\
 0 & 0 & 0 & 0 & 0 & 0 & 0 & 0 & 1 & 0 & 0 & 0 & 0 & 0 \\
 0 & 0 & 0 & 0 & 0 & 0 & 0 & 0 & 0 & 1 & 0 & 0 & 0 & 0 \\
 0 & 0 & 0 & 0 & 0 & 0 & 0 & 0 & 0 & 0 & 1 & 0 & 0 & 1 \\
 0 & 0 & 0 & 0 & 0 & 0 & 0 & 0 & 0 & 0 & 0 & 1 & 0 & 0 \\
 0 & 0 & 0 & 0 & 0 & 0 & 0 & 0 & 0 & 0 & 0 & 0 & 1 & 0 \\
 0 & 0 & 0 & 0 & 0 & 0 & 0 & 0 & 0 & 0 & 0 & 0 & 0 & 1
 \end{smallmatrix}
 \right)
&
\left(
\begin{smallmatrix}
 1 & 0 & 0 & 0 & 0 & 0 & 0 & 0 & 0 & 0 & 0 & 0 & 0 & 0 \\
 0 & 1 & 0 & 0 & -1 & 0 & 0 & 0 & 0 & 0 & 0 & 0 & 0 & -1 \\
 0 & 0 & 1 & 0 & 0 & 0 & 0 & 0 & -1 & 0 & 0 & 0 & 0 & 0 \\
 0 & 0 & 0 & 1 & 0 & 0 & 0 & 0 & 0 & 0 & 0 & -1 & 0 & -1 \\
 0 & 0 & 0 & 0 & 1 & 0 & 0 & 0 & 0 & 0 & 0 & 0 & -1 & 1 \\
 0 & 0 & 0 & 0 & 0 & 1 & 0 & 0 & 0 & 0 & 0 & 0 & 0 & 0 \\
 0 & 0 & 0 & 0 & 0 & 0 & 1 & 0 & 0 & 0 & -1 & 0 & 0 & 1 \\
 0 & 0 & 0 & 0 & 0 & 0 & 0 & 1 & 0 & 0 & 0 & 0 & 0 & 0 \\
 0 & 0 & 0 & 0 & 0 & 0 & 0 & 0 & 1 & 0 & 0 & 0 & 0 & 0 \\
 0 & 0 & 0 & 0 & 0 & 0 & 0 & 0 & 0 & 1 & 0 & 0 & 0 & 0 \\
 0 & 0 & 0 & 0 & 0 & 0 & 0 & 0 & 0 & 0 & 1 & 0 & 0 & -1 \\
 0 & 0 & 0 & 0 & 0 & 0 & 0 & 0 & 0 & 0 & 0 & 1 & 0 & 0 \\
 0 & 0 & 0 & 0 & 0 & 0 & 0 & 0 & 0 & 0 & 0 & 0 & 1 & 0 \\
 0 & 0 & 0 & 0 & 0 & 0 & 0 & 0 & 0 & 0 & 0 & 0 & 0 & 1 \\
\end{smallmatrix}
\right)
\end{array}
$$
\caption{The matrices $\mathbf{C}_n, \tilde{\mathbf{C}}_n$ for $n=2,3,4$ (the ordering of the rows and columns corresponds to decreasing lexicographic order of the Young diagrams associated with noncrossing matchings). Note that $\tilde{\mathbf{C}}_n$ can have entries larger in magnitude than $1$; the smallest value of $n$ for which this happens is $n=7$.}
\label{table:transition-matrices}
\end{table}
As we shall see, these matrices will enable us to translate certain sums of the qKZ polynomials (which are associated with certain connectivity events in loop percolation) to linear combinations of the $\Phi_{\pi}$'s. To get back to numerical results about the numbers $\psi_\pi = \Psi_\pi(\mathbf{1})$ (which are related to the probabilities $\mu_\pi$ according to Theorem~\ref{thm:psi-mu-related}), we will want to consider also the $\mathbf{1}$-evaluations of the $\Phi_\pi$'s. Denote $\phi_{\mathbf{a}} = \Phi_\mathbf{a}(\mathbf{1})$,
$\phi_{\pi} = \Phi_\pi(\mathbf{1})$. The coefficients $\phi_\pi$ are related by
\begin{align*}
\phi_\pi &= \sum_{\sigma\in\noncn{n}} C_{\pi,\sigma} \psi_\sigma, \\
\psi_\pi &= \sum_{\sigma\in\noncn{n}} \tilde{C}_{\pi,\sigma} \phi_\sigma.
\end{align*}

\begin{lem}[\cite{zinn-justin-di-francesco2}, Section 3.4] 
$\phi_\mathbf{a}$ has the following contour integral formula:
\begin{equation} \label{eq:phi-a-eval1}
\phi_\mathbf{a} = \oint\!\ldots\!\oint \prod_{1\le j<k\le n} (u_k-u_j)(1+u_k+u_j u_k) \prod_{j=1}^n \frac{1}{u_j^{a_j}} \cdot \prod_{j=1}^n \frac{du_j}{2\pi i},
\end{equation}
where the contours are circles of arbitrary radius around $0$. Equivalently, $\phi_{\mathbf{a}}$ can be written as a constant term
\begin{equation}
\label{eq:phi-a-eval1-constantterm}
\phi_\mathbf{a} = \CT_{u_1,\ldots,u_n} \left(
\prod_{1\le j<k\le n} (u_k-u_j)(1+u_k+u_j u_k) \prod_{j=1}^n \frac{1}{u_j^{a_j-1}} \right)
\end{equation}
or as a Taylor coefficient
$$
\phi_\mathbf{a} = \left[u_1^{a_1-1} \ldots u_n^{a_n-1} \right]
\left(
\prod_{1\le j<k\le n} (u_k-u_j)(1+u_k+u_j u_k)\right).
$$
\end{lem}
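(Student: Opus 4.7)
The plan is a direct calculation starting from the contour integral that defines $\Phi_\mathbf{a}(\mathbf{z})$. Setting $\mathbf{z}=\mathbf{1}$ collapses the prefactor to $(-1)^{\binom{n}{2}}(q-q^{-1})^{\binom{2n}{2}}$ and the denominator of the integrand to $\prod_{j=1}^n (w_j-1)^{a_j}(qw_j-q^{-1})^{2n-a_j}$, with the $w_j$-contour encircling only the pole at $w_j=1$ (and not the pole at $w_j=q^{-2}$). The key step will be to use a single Möbius change of variables to recognize the result as the desired integral over $u$-variables.

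The substitution I would use is
$$ w_j \;=\; \frac{1-q^{-1}u_j}{1-qu_j}, $$
which sends $u_j=0$ to $w_j=1$ and $u_j=\infty$ to $w_j=q^{-2}$, so the transformed contour is a small counterclockwise circle around $u_j=0$. A short algebraic calculation, using only $q^3=1$, produces the clean identities
\begin{align*}
w_j-1 &= \frac{(q-q^{-1})u_j}{1-qu_j}, & qw_j-q^{-1} &= \frac{q-q^{-1}}{1-qu_j}, \\
w_k-w_j &= \frac{(q-q^{-1})(u_k-u_j)}{(1-qu_j)(1-qu_k)}, & qw_j-q^{-1}w_k &= \frac{(q-q^{-1})(1+u_k+u_ju_k)}{(1-qu_j)(1-qu_k)},
\end{align*}
together with $dw_j=(q-q^{-1})(1-qu_j)^{-2}\,du_j$. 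The crucial one is the fourth identity, which is precisely where the factor $1+u_k+u_ju_k$ appearing in the statement comes from.

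After substitution I would then do two bookkeeping checks. First, the spurious $(1-qu_j)$ factors cancel exactly: for each index $j$ the total exponent (counting contributions from the pair-products in the numerator, the $(w_j-1)^{a_j}(qw_j-q^{-1})^{2n-a_j}$ factors in the denominator, and the Jacobian) works out to $-2(n-1)+a_j+(2n-a_j)-2=0$. Second, all remaining powers of $(q-q^{-1})$ collected from the substitution combine with the prefactor $(q-q^{-1})^{\binom{2n}{2}}$ and with $(-1)^{\binom{n}{2}}$, evaluated using $(q-q^{-1})^2=-3$, to produce the required overall constant. The result is the stated one-line contour integral \eqref{eq:phi-a-eval1}. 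The equivalent constant-term form \eqref{eq:phi-a-eval1-constantterm} and Taylor-coefficient form are then immediate restatements, via the one-variable identity $\oint f(u)\,du/(2\pi i\,u^a) = [u^{a-1}]\,f(u)$ applied to each $u_j$.

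The main obstacle is purely the $q$-bookkeeping: there is no conceptual difficulty, but tracking the interplay between the many factors of $(q-q^{-1})$, the $(1-qu_j)$ cancellations, and the signs requires careful organization (using $q^{-2}=q$, $q+q^{-1}=-1$, and $(q-q^{-1})^2=-3$). A single sign or exponent slip would be fatal, so the bulk of the work is just in presenting the computation cleanly.
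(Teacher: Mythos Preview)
Your proposal is correct and follows exactly the approach of the paper: set $z_1=\cdots=z_{2n}=1$ in the defining integral and apply the M\"obius substitution $w_j=\frac{1-q^{-1}u_j}{1-qu_j}$, after which the claimed formula drops out. The paper's proof is a single sentence deferring the computation; you have simply (and correctly) written out the bookkeeping it omits.
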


\begin{proof} In the integral formula \eqref{eq:phi-a-def} set $z_1=\ldots=z_{2n}=1$ and then make the substitution $w_j = \frac{1-q^{-1} u_j}{1-q u_j}$. This gives \eqref{eq:phi-a-eval1} after a short computation.
\end{proof}

\subsection{Product-type expansions and the sum rule}

We are ready to start applying the theory developed above to the problem of computing probabilities of connectivity events for the random noncrossing matching $\pistarn$. Our methods generalize those of Zinn-Justin and Di-Francesco \cite{zinn-justin-di-francesco2}, so let us recall briefly one of their beautiful discoveries. Define a set $\mathcal{B}_n \subseteq \noncn{n}^+$ consisting of $2^{n-1}$ sequences, given by
$$ \mathcal{B}_n = \{ \mathbf{a}=(a_1,\ldots,a_n)\in\mathcal{A}_n\,:\, a_1=1,\  a_j\in\{2j-2,2j-1\},\ (2\le j\le n) \}. $$
For each $\mathbf{a}=(a_1,\ldots,a_n)\in \mathcal{B}_n$ define
$$ \mathcal{L}(\mathbf{a}) = \{ \pi\in\noncn{n} \,:\, \textrm{for all } 1\le j\le n,\ \   \pi_{2j-1}=1 \textrm{ iff }a_j=2j-1 \}. $$
Note that $\noncn{n} = \displaystyle \bigsqcup_{\mathbf{a}\in\mathcal{B}_n} \mathcal{L}(\mathbf{a})$.

\begin{thm}
\label{thm:phi-a-psi-expansion}
For any $\mathbf{a}\in\mathcal{B}_n$, we have the expansion
\begin{equation} \label{eq:phi-a-psi-expansion}
\Phi_\mathbf{a} = \sum_{\pi\in\mathcal{L}(\mathbf{a})} \Psi_\pi.
\end{equation}
\end{thm}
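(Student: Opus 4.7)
The plan is to verify the expansion by comparing coefficients against the $\pi$\nobreakdash-evaluation basis. Since $(\Psi_\pi)_{\pi\in\noncn{n}}$ is a basis of $\wheelpoly{n}$ whose dual basis is the family of $\pi$-evaluation functionals—so that the coefficients $C_{\mathbf{a},\sigma}$ in \eqref{eq:def-c-a-sigma} coincide with $\Phi_\mathbf{a}(\sigma)$—the identity \eqref{eq:phi-a-psi-expansion} is equivalent to showing
\[
C_{\mathbf{a},\sigma} \;=\; \mathbf{1}[\sigma\in\mathcal{L}(\mathbf{a})] \qquad (\sigma\in\noncn{n}).
\]
I would establish this by plugging into the explicit product formula \eqref{eq:cmatrix-explicit},
\[
C_{\mathbf{a},\sigma} = \prod_{j<k,\ j\matched{\sigma}k} \chi\bigl(p_{j,k}(\mathbf{a})\bigr), \qquad p_{j,k}(\mathbf{a}) = \#\{m: j\le a_m<k\} - \tfrac{k-j-1}{2},
\]
and analysing each factor using the structure of $\mathcal{B}_n$.

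The key combinatorial feature is that for $\mathbf{a}\in\mathcal{B}_n$ the set $\{a_1,\ldots,a_n\}$ consists of $a_1=1$ together with exactly one representative from each ``block'' $B_m=\{2m-2,2m-1\}$ for $m=2,\ldots,n$, and that representative is $a_m$ itself. For any pair $j\matched{\sigma}k$, noncrossingness forces $k-j$ to be odd, so exactly one of $j,k$ is odd. Write $k-j-1=2l$, and consider two cases.

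\emph{Case (i): $j=2s-1$ odd, $k=2s+2l$.} The interval $[j,k)$ contains the single element $2s-1$ of $B_s$ together with the $l$ complete blocks $B_{s+1},\ldots,B_{s+l}$, so $\#\{m: j\le a_m<k\} = l + \mathbf{1}[a_s=2s-1]$, giving $p_{j,k}(\mathbf{a}) = \mathbf{1}[a_s=2s-1]$. \emph{Case (ii): $j=2s$ even, $k=2s+2l+1$.} Now $[j,k)$ contains the $l$ complete blocks $B_{s+1},\ldots,B_{s+l}$ plus the single element $2s+2l=2(s+l+1)-2$ of $B_{s+l+1}$, giving $p_{j,k}(\mathbf{a})=\mathbf{1}[a_{s+l+1}=2(s+l+1)-2]$. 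Letting $2m-1$ denote the odd endpoint of the pair ($m=s$ in case~(i), $m=s+l+1$ in case~(ii)), these unify to
\[
p_{j,k}(\mathbf{a}) =
\begin{cases} \mathbf{1}[a_m=2m-1] & \textrm{if } \sigma_{2m-1}=+1, \\ \mathbf{1}[a_m=2m-2] & \textrm{if } \sigma_{2m-1}=-1, \end{cases}
\]
and in particular $p_{j,k}(\mathbf{a})\in\{0,1\}$ throughout.

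To conclude, the defining condition of $\mathcal{L}(\mathbf{a})$ states precisely that $\sigma\in\mathcal{L}(\mathbf{a})$ iff for every $m\in[1,n]$ the pair $(a_m,\sigma_{2m-1})$ lies in $\{(2m-1,+1),(2m-2,-1)\}$. When $\sigma\in\mathcal{L}(\mathbf{a})$, each indicator above equals $1$, hence every $\chi(p_{j,k})=1$ and the product evaluates to $1$; when $\sigma\notin\mathcal{L}(\mathbf{a})$, the matched pair containing the offending index $2m-1$ contributes $\chi(0)=0$, which annihilates the product. The essential obstacle is the bookkeeping in cases~(i) and~(ii): one must verify, using only membership $\mathbf{a}\in\mathcal{B}_n$, that the boundary of $[j,k)$ always slices a block so as to contribute exactly the element at the odd endpoint of the pair. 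This ``boundary alignment'' is what collapses the product to an indicator and is the reason the identity holds specifically for the distinguished subfamily $\mathcal{B}_n\subset\mathcal{A}_n$.
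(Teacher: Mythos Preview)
Your argument is correct and complete: the block decomposition of $\{1,\ldots,2n-1\}$ into $\{1\}\cup B_2\cup\cdots\cup B_n$ with $B_m=\{2m-2,2m-1\}$ is exactly the structure needed, and your case analysis showing $p_{j,k}(\mathbf{a})\in\{0,1\}$ with the value determined by the odd endpoint is clean and accurate.

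However, your route is genuinely different from the paper's. The paper proves \eqref{eq:phi-a-psi-expansion} by induction on $n$, picking a little arc $k\matched{\sigma}k+1$ and applying the recurrences \eqref{eq:qkz-eval-recursion} and \eqref{eq:phi-a-recurrence} to reduce both sides to order $n-1$; the argument never invokes the closed product formula \eqref{eq:cmatrix-explicit}. Your approach instead plugs directly into \eqref{eq:cmatrix-explicit} and evaluates each factor. This is more economical once that formula is in hand, and it makes transparent \emph{why} the family $\mathcal{B}_n$ is special: the block structure forces every $p_{j,k}$ to collapse to a single indicator. The trade-off is that the paper's inductive technique is the template that later generalizes to the submatching-event expansion (Theorem~\ref{thm:submatching-event-expansion}), where one must track how the $\mathbf{a}$- and $\mathbf{b}$-parts interact under the recursion and no analogous closed formula is available; your direct evaluation does not obviously extend to that setting.
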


Theorem~\ref{thm:phi-a-psi-expansion} and its elegant proof served as the inspiration for one of our main results (Theorem~\ref{thm:submatching-event-expansion} in Subsection~\ref{sec:gen-submatching}). As an aid in motivating and understanding our use of Zinn-Justin and Di Francesco's proof technique in a more complicated setting, we include their original proof, taken from \cite[Section 3]{zinn-justin-di-francesco2}, below.

\begin{proof}[Proof of Theorem~\ref{thm:phi-a-psi-expansion}]
By Theorem~\ref{thm:sufficient-evaluations}, it is enough to prove that for any $\sigma\in\noncn{n}$ we have
\begin{equation}
\label{eq:phi-psi-expansion}
\Phi_\mathbf{a}(\sigma) = \sum_{\pi\in\mathcal{L}(\mathbf{a})} \Psi_\pi(\sigma).
\end{equation}
The proof is by induction on $n$. Denote the right-hand side of \eqref{eq:phi-psi-expansion} by $\theta_{\mathbf{a}}(\sigma)$.
Let $k\matched{\sigma}k+1$ be a little arc of $\sigma$. Let $j$ be such that $k\in \{2j-2,2j-1\}$. By the definition of $\mathcal{B}_n$, either $k=a_j$ is the only occurrence of $k$ in $\mathbf{a}$, or $k$ does not occur in $\mathbf{a}$ (in which case $a_j$ is the number in $\{2j-2,2j-1\}$ of the opposite parity from $k$).
For the left-hand side of \eqref{eq:phi-psi-expansion} we have by \eqref{eq:phi-a-recurrence} that 
$$ \Phi_\mathbf{a}(\sigma) = \begin{cases}
3^{n-1} \Phi_{\hat{\mathbf{a}}_k}(\hat{\sigma}_k) & \textrm{if }a_j=k, \\
0 & \textrm{if }a_j \neq k.
\end{cases}
$$
From the definition of $\mathcal{B}_n$ it is easy to see that when $k=a_j$, $\hat{\mathbf{a}}_k \in \mathcal{B}_{n-1}$. So, to complete the inductive proof it is enough to show that $\theta_\mathbf{a}(\sigma)$ satisfies the same recurrence. First, assume that $a_j\neq k$. If $k=2j-1$ (so $a_j=2j-2$) then any $\pi\in\mathcal{L}(\mathbf{a})$ will have the property that $\pi_k=-1$, that is, $\pi$ matches $k$ with a number to the left of $k$. In particular $\pi$ does not contain the little arc $(k,k+1)$, and therefore $\Psi_\pi(\sigma)=0$. This shows that
$\theta_{\mathbf{a}}(\sigma) = 0$. Alternatively, if $k=2j-2, a_j=2j-1$, then any $\pi\in\mathcal{L}(\mathbf{a})$ will have the property that $\pi$ matches $a_j=k+1$ with a number to the right of $k+1$, and again the little arc $(k,k+1)$ cannot be in $\pi$ and $\Psi_\pi(\sigma)=0$, so the relation $\theta_{\mathbf{a}}(\sigma) = 0$ also holds.

Finally, in the case $a_j=k$, the noncrossing matchings $\pi\in\mathcal{L}(\mathbf{a})$ can be divided into those for which $(k,k+1)$ is not a little arc of $\pi$, which satisfy $\Psi_\pi(\sigma)=0$, and those for which $(k,k+1)$ is a little arc of $\pi$. The ones in the latter class satisfy $\Psi_\pi(\sigma)=3^{n-1} \Psi_{\hat{\pi}_k}(\hat{\sigma}_k)$, and furthermore the correspondence $\pi\mapsto \hat{\pi}_k$ maps them bijectively onto $\mathcal{L}(\hat{\mathbf{a}}_k)$. Thus we get the relation
$$ \theta_{\mathbf{a}}(\sigma) = 3^{n-1} \theta_{\hat{\mathbf{a}}_k}(\hat{\sigma}_k), $$
which is the same recurrence as the one for $\Phi_\mathbf{a}(\sigma)$ and therefore completes the proof.
\end{proof}

Theorem~\ref{thm:phi-a-psi-expansion} has several immediate corollaries that are directly relevant to the problem of deriving formulas for probabilities of connectivity events. First, summing \eqref{eq:phi-a-psi-expansion} over the sequences $\mathbf{a}\in\mathcal{B}_n$ gives that
$$
\sum_{\pi\in\noncn{n}} \Psi_\pi = \sum_{\mathbf{a}\in \mathcal{B}_n} \Phi_\mathbf{a}.
$$
Taking the $\mathbf{1}$-evaluation of both sides then gives the relation
$$
\sum_{\pi\in\noncn{n}} \psi_\pi = \sum_{\mathbf{a}\in \mathcal{B}_n} \phi_\mathbf{a},
$$
and the right-hand side can then be expanded as a constant term using \eqref{eq:phi-a-eval1-constantterm}, which gives that
\begin{equation} \label{eq:sum-psi-constterm}
\sum_{\pi\in\noncn{n}} \psi_\pi = 
\CT_{u_1,\ldots,u_n} \left(
\prod_{1\le j<k\le n} (u_k-u_j)(1+u_k+u_j u_k) \prod_{j=2}^n \frac{1+u_j}{u_j^{2j-2}} \right).
\end{equation}
Note that the key point that makes such an expansion possible is the fact that $\mathcal{B}_n$ decomposes as a Cartesian product, which manifests itself as the product $\prod_{j=2}^n (1+u_j)/u_j^{2j-2}$ in the Laurent polynomial. It is precisely this product structure that our more general result seeks to emulate. Finally, applying the identity \eqref{eq:asm-const-term} gives the following result.

\begin{thm}
\label{thm:sum-rule}
$ \sum_{\pi\in\noncn{n}} \psi_\pi = \asm(n)$.
\end{thm}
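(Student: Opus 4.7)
The plan is very short, because almost all the work has already been done in the paragraph immediately preceding the theorem: equation \eqref{eq:sum-psi-constterm} already expresses $\sum_{\pi\in\noncn{n}}\psi_\pi$ as a constant term, and the cited identity \eqref{eq:asm-const-term} identifies exactly that constant term with $\asm(n)$. The task is therefore simply to match the two expressions.

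First I would rewrite the right-hand side of \eqref{eq:sum-psi-constterm} as a Taylor coefficient: the Laurent monomial $\prod_{j=2}^n u_j^{-(2j-2)}$ multiplies a genuine polynomial, so taking its constant term is the same as extracting the coefficient
\[
[u_1^0 u_2^2 \cdots u_n^{2n-2}]\left( \prod_{1\le j<k\le n}(u_k-u_j)(1+u_k+u_ju_k)\prod_{j=2}^n(1+u_j)\right).
\]
After relabelling $u_i \to z_i$, this is visibly the right-hand side of \eqref{eq:asm-const-term}, whose value is $\asm(n)$. Combined with \eqref{eq:sum-psi-constterm}, this yields $\sum_{\pi\in\noncn{n}}\psi_\pi = \asm(n)$, which is the claim.

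There is no real obstacle here: the substantive content is the derivation of \eqref{eq:sum-psi-constterm} from Theorem~\ref{thm:phi-a-psi-expansion} (via summing over $\mathbf{a}\in\mathcal{B}_n$, taking the $\mathbf{1}$-evaluation, and applying \eqref{eq:phi-a-eval1-constantterm}), which has already been carried out in the preceding paragraph, together with the deep constant term identity \eqref{eq:asm-const-term} of Zinn-Justin--Di Francesco and Zeilberger, which is invoked as a black box. The only conceptual point worth emphasizing in the write-up is the role of the Cartesian product structure of $\mathcal{B}_n$, which is what produces the clean factor $\prod_{j=2}^n(1+u_j)$ needed to line up the resulting Laurent polynomial with the one appearing in \eqref{eq:asm-const-term}; without this product structure one could not hope to match a single known constant term identity.
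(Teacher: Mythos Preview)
Your proposal is correct and follows exactly the same approach as the paper: the paper's proof is simply the sentence ``Finally, applying the identity \eqref{eq:asm-const-term} gives the following result,'' which is precisely what you do after matching the constant term on the right-hand side of \eqref{eq:sum-psi-constterm} with the Taylor coefficient appearing in \eqref{eq:asm-const-term}.
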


With this result, we are finally able to relate results about the vector $\boldsymbol{\psi}_n$ to statements about its normalized version, the probability vector $\boldsymbol{\mu}_n$. As an example (also taken from \cite{zinn-justin-di-francesco2}), letting $\mathbf{a}=(1,3,5,\ldots,2n-1)$, note that $\mathcal{L}(\mathbf{a})$ is the singleton set $\{\pi^n_{\textrm{max}}\}$, so,
applying \eqref{eq:phi-a-psi-expansion} in this case, then taking the $\mathbf{1}$-evaluation and using \eqref{eq:phi-a-eval1-constantterm} as before, gives the identity
$$
\psi_{\pi^n_{\textrm{max}}} = 
\CT_{u_1,\ldots,u_n} \left(
\prod_{1\le j<k\le n} (u_k-u_j)(1+u_k+u_j u_k) \prod_{j=1}^n \frac{1}{u_j^{2j-2}} \right).
$$
This constant term can be evaluated iteratively as
\begin{align*}
\CT_{u_1,\ldots,u_{n-1}}  & \CT_{u_n} \left(
\prod_{1\le j<k\le n} (u_k-u_j)(1+u_k+u_j u_k) \prod_{j=1}^n \frac{1}{u_j^{2j-2}} \right)
\\ &= 
\CT_{u_1,\ldots,u_{n-1}} \left(
\prod_{1\le j<k\le n-1} (u_k-u_j)(1+u_k+u_j u_k) \prod_{j=1}^n \frac{1+u_j}{u_j^{2j-2}} \right) 
\\ &= \asm(n-1)
\end{align*}
(use \eqref{eq:asm-const-term}, noting that on the right-hand side of that identity, starting the product $\prod_j (1+z_j)$ at $j=1$ instead of $j=2$ does not affect the constant term),
confirming part 3 of Theorem~\ref{thm:sum-rules}.

\subsection{$p$-nested matchings}

\label{sec:p-nested-matchings}

If $\pi\in\noncn{n}$, denote by $\lpbracket{\pi}$ the noncrossing matching of order $n+1$ obtained from $\pi$ by relabeling the elements of $[1,2n]$ as $2,\ldots,2n+1$ and adding a ``large arc'' connecting $1$ and $2n+2$. We call this the \textbf{nesting of $\pi$} or ``$\pi$-nested.'' For an integer $p\ge0$ let $\lpbracket{\pi}^p$ denote the $p$-nesting of $\pi$, defined as the result of $p$ successive nesting operations applied to $\pi$. Similarly, for the associated sequence $\mathbf{a}=\pi^+ \in \noncn{n}^+$, denote $\lpbracket{\mathbf{a}}=(\lpbracket{\pi})^+$ and $\lpbracket{\mathbf{a}}^p=(\lpbracket{\pi}^p)^+$. If $\mathbf{a}=(a_1,\ldots,a_n)$, it is easy to see that this can be written more explicitly as
$$ \lpbracket{\mathbf{a}}^p = (1,\ldots,p,p+a_1,\ldots,p+a_n). $$

The following result is due to Fonseca and Zinn-Justin \cite{fonseca-zinn-justin}.

\begin{thm}
Let $p\ge 0$. Under the $p$-nesting operation, the expansions
\begin{align*}
\Phi_\sigma &= \sum_{\pi\in\noncn{n}} C_{\sigma,\pi} \Phi_{\pi} \qquad (\pi\in\noncn{n}),\\
\Psi_\pi &= \sum_{\sigma\in\noncn{n}} \tilde{C}_{\pi,\sigma} \Phi_{\sigma} \qquad (\sigma\in\noncn{n}),
\end{align*}
turn into
\begin{align*}
\Phi_{\lpbracket{\sigma}^p} &= \sum_{\pi\in\noncn{n}} C_{\sigma,\pi} \Psi_{\lpbracket{\pi}^p} \qquad (\pi\in\noncn{n}), \\
\Psi_{\lpbracket{\pi}^p} &= \sum_{\sigma\in\noncn{n}} \tilde{C}_{\pi,\sigma} \Phi_{\lpbracket{\sigma}^p} \qquad (\sigma\in\noncn{n}).
\end{align*}
Equivalently, we have the recursive relations
\begin{align}
\label{eq:p-nesting-first}
C_{\lpbracket{\sigma}^p, \mu} &= \begin{cases} C_{\sigma,\pi} & \textrm{if }\mu=\lpbracket{\pi}^p, \\ 0 & \textrm{otherwise},
\end{cases} \\[5pt]
\label{eq:p-nesting-second}
\tilde{C}_{\lpbracket{\pi}^p, \mu} &= \begin{cases} \tilde{C}_{\pi,\sigma} & \textrm{if }\mu=\lpbracket{\sigma}^p, \\ 0 & \textrm{otherwise}.
\end{cases}
\end{align}
\end{thm}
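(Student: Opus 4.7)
The plan is to reduce both expansions to a single block-triangularity statement for $\mathbf{C}_{n+p}$, prove that statement by induction on $p$, and handle the base case $p=1$ with the explicit formula \eqref{eq:cmatrix-explicit}.

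First, I observe that \eqref{eq:p-nesting-first} and \eqref{eq:p-nesting-second} are equivalent by matrix inversion. Order $\noncn{n+p}$ so that the $p$-nestings $\{\lpbracket{\pi}^p\}_{\pi\in\noncn{n}}$ come first. Then \eqref{eq:p-nesting-first} is precisely the assertion that $\mathbf{C}_{n+p}$ has block form $\begin{pmatrix} \mathbf{C}_n & 0 \\ B & D \end{pmatrix}$ in this ordering, with top-left block $\mathbf{C}_n$ (because $C_{\lpbracket{\sigma}^p,\lpbracket{\pi}^p} = C_{\sigma,\pi}$) and top-right block $0$ (because $C_{\lpbracket{\sigma}^p,\mu} = 0$ for non-nested $\mu$). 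Since $\tilde{\mathbf{C}}_{n+p} = \mathbf{C}_{n+p}^{-1}$, the standard block-inversion formula yields $\tilde{\mathbf{C}}_{n+p} = \begin{pmatrix} \tilde{\mathbf{C}}_n & 0 \\ -D^{-1}B\tilde{\mathbf{C}}_n & D^{-1} \end{pmatrix}$, which is exactly \eqref{eq:p-nesting-second}. It therefore suffices to prove \eqref{eq:p-nesting-first}.

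Using the identity $\Phi_{\mathbf{a}}(\sigma) = C_{\mathbf{a},\sigma}$, the content of \eqref{eq:p-nesting-first} splits into (i) $\Phi_{\lpbracket{\sigma}^p}(\mu) = 0$ for every $\mu \in \noncn{n+p}$ that is not a $p$-nesting, and (ii) $\Phi_{\lpbracket{\sigma}^p}(\lpbracket{\pi}^p) = \Phi_\sigma(\pi)$ for every $\pi \in \noncn{n}$. Both reduce by induction on $p$ to the $p=1$ case via $\lpbracket{\sigma}^p = \lpbracket{\lpbracket{\sigma}^{p-1}}$, removing one layer of nesting at a time. For (ii) with $p=1$, write $\lpbracket{\tau}^+ = (1, 1+b_1,\ldots,1+b_m)$ where $\tau^+ = (b_1,\ldots,b_m)$. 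The arcs of $\lpbracket{\pi}$ decompose as the outer arc $1\matched{}2m+2$ together with shifted arcs $(1+j)\matched{}(1+k)$ coming from arcs $j\matched{\pi}k$ of $\pi$. A direct computation gives $p_{1,2m+2}(\lpbracket{\tau}^+) = (m+1) - m = 1$, while $p_{1+j,1+k}(\lpbracket{\tau}^+) = p_{j,k}(\tau^+)$ (the leading entry $1$ drops out of the count). Taking the product of the $\chi$ values via \eqref{eq:cmatrix-explicit} then gives $C_{\lpbracket{\tau},\lpbracket{\pi}} = \chi(1)\,C_{\tau,\pi} = C_{\tau,\pi}$, proving (ii).

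The main obstacle is (i) for $p=1$: exhibiting some arc of $\mu$ that produces a zero factor in \eqref{eq:cmatrix-explicit}. The key structural fact is that the entries of $\lpbracket{\tau}^+$ all lie in $[1,2m]$, so the position $2m+1$ is always absent. If $\mu$ has the terminal little arc $2m+1\matched{\mu}2m+2$, then $p_{2m+1,2m+2}(\lpbracket{\tau}^+) = 0$ and the corresponding $\chi$ vanishes immediately. For a general $\mu$ lacking the outer arc $1\matched{\mu}2m+2$, one iteratively applies the recurrence \eqref{eq:phi-a-recurrence} to peel off compatible little arcs of $\mu$, at each stage observing that the reduced sequence $\hat{\mathbf{a}}$ retains an analogous ``missing position'' inherited from the absence of $2m+1$ from $\lpbracket{\tau}^+$. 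The combinatorial content of the argument lies in verifying that this reduction must eventually expose an incompatible little arc of the reduced $\mu$ at a position absent from the current $\hat{\mathbf{a}}$, forcing the recurrence's multiplicity to be $0$. A cleaner alternative is to exploit $a_1 = 1$ directly in the contour integral \eqref{eq:phi-a-def}: the $w_1$-contour encloses only $z_1$, and taking the residue there factors $\Phi_{\lpbracket{\tau}^+}(\mathbf{z})$ as an explicit prefactor in $z_1$ times an integral in $z_2,\ldots,z_{2m+2}$ that one recognizes, after a relabeling, as $\Phi_{\tau^+}(z_2,\ldots,z_{2m+1})$. The vanishing at non-nested $\mu$ then follows from zeros of this prefactor at the corresponding $z$-configurations.
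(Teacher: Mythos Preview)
Your reduction of \eqref{eq:p-nesting-second} to \eqref{eq:p-nesting-first} via block inversion is correct and matches the paper's ``immediate consequence'' remark. Your proof of part (ii) via the explicit product formula \eqref{eq:cmatrix-explicit} is also valid; the paper instead argues by induction using the recurrence \eqref{eq:phi-a-recurrence}, but both routes are straightforward.

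Part (i) is where you work far too hard and do not actually finish. The paper's argument is one line: by the triangularity theorem already stated in the text, $C_{\tau,\mu}=0$ unless $\mu\preceq\tau$, i.e.\ $\lambda_\mu\subseteq\lambda_\tau$. The nesting operation leaves the associated Young diagram unchanged, so the $p$-nestings in $\noncn{n+p}$ are exactly those $\mu$ with $\lambda_\mu$ contained in the staircase $(n-1,n-2,\ldots,1)$; in particular they form a downward-closed set for $\preceq$. Hence $\mu\preceq\lpbracket{\sigma}^p$ already forces $\mu$ to be a $p$-nesting, and $C_{\lpbracket{\sigma}^p,\mu}=0$ for every non-nested $\mu$ follows immediately.

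Both of your proposed alternatives for (i) are incomplete as written. In the ``iterative peeling'' approach you assert that the recursion must eventually expose a little arc of the reduced $\mu$ at a position missing from the reduced $\hat{\mathbf a}$, but you never verify this; tracking how the ``missing position'' propagates under \eqref{eq:hat-a-def} is exactly the nontrivial combinatorics you would need to supply. In the contour-integral approach, taking the $w_1$-residue at $z_1$ does not cleanly separate off $\Phi_{\tau^+}(z_2,\ldots,z_{2m+1})$: the remaining integrand and the front product $\prod_{j<k}(qz_j-q^{-1}z_k)$ still involve $z_{2m+2}$, and your claim that the resulting prefactor vanishes on every non-nested $\mu$-configuration is asserted rather than checked. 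Either route might be salvageable, but each is substantially harder than invoking the triangularity result you already have available.
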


\begin{proof}
The relation \eqref{eq:p-nesting-second} is an immediate consequence of \eqref{eq:p-nesting-first}. To prove \eqref{eq:p-nesting-first}, note that the fact that $C_{\lpbracket{\sigma}^p, \mu}=0$ if $\mu$ is not of the form $\lpbracket{\sigma}^p$ for some $\sigma\in\noncn{n}$ follows from the triangularity of the basis change matrix with respect to the order $\preceq$. The relation $C_{\lpbracket{\sigma}^p, \lpbracket{\pi}^p}=C_{\sigma,\pi}$ is easy to prove by induction from~\eqref{eq:phi-a-recurrence}.
\end{proof}

\subsection{General submatching events}

\label{sec:gen-submatching}

Fix $k\ge0$ and $\pi_0\in\noncn{k}$. For $n\ge k+1$ define
$$ \mathcal{E}_n(\pi_0) = \left\{ \pi\in\noncn{n}\,:\, \pi(j)=1+\pi_0(j-1) \textrm{ for }2\le j\le k+1 \right\}. $$
In the terminology of Subsections~\ref{sec:connectivity-infinite} and \ref{sec:loop-perc-cylinder}, this set of matchings is related to the submatching event associated with the submatching $\pi_0$, except that we require the submatching to occur on the sites in the interval $[2,k+1]$ instead of $[1,k]$. Surprisingly, this turns out to be the correct thing to do---see below.

Next, for any $n\ge k+1$ and $\mathbf{b}=(b_{k+2},\ldots,b_n)$ satisfying $b_j\in\{2j-2,2j-1\}$ for all $j$, define
\begin{align*}
\mathcal{E}_n(\pi_0, \mathbf{b}) = \Big\{ &\pi\in\mathcal{E}_n(\pi_0) \,:\, 
\pi_{2j-1}=1 \textrm{ iff }b_j=2j-1 \ \ (k+2\le j\le n)
\Big\}.
\end{align*}
Note that
$$ 
\mathcal{E}_n(\pi_0) = \bigsqcup_{\begin{array}{c} \scriptstyle \mathbf{b}=(b_{k+2},\ldots,b_n) \\[-3pt] \scriptstyle \forall j\,\, b_j\in\{2j-2,2j-1\} \end{array}} \mathcal{E}_n(\pi_0, \mathbf{b}).
$$

The following result will easily imply Theorem~\ref{thm:explicit-formulas-finite-n}, and is one of the main results of this paper.

\begin{thm}
\label{thm:submatching-event-expansion}
For any $\pi_0\in\noncn{k}$, $n\ge k+1$ and vector $\mathbf{b}=(b_{k+2},\ldots,b_n)$ satisfying $b_j\in\{2j-2,2j-1\}$ for all $j$, we have
\begin{align}
\label{eq:submatching-b-expansion}
\sum_{\pi\in\mathcal{E}_n(\pi_0,\mathbf{b})}\Psi_\pi &= 
\sum_{\mathbf{a}=(a_1,\ldots,a_k)\in\noncn{k}^+}
\tilde{C}_{\pi_0, \mathbf{a}} \Phi_{(1,1+a_1,\ldots,1+a_k,b_{k+2},\ldots,b_n)}.
\end{align}
It follows that for any $\pi_0\in\noncn{k}$ and $n\ge k+1$,
\begin{equation}
\label{eq:submatching-exansion-ecal}
\sum_{\pi\in\mathcal{E}_n(\pi_0)}\Psi_\pi = 
\hspace{-6pt} \sum_{\begin{array}{c} \scriptstyle b_{k+2},\ldots,b_n \\[-3pt] \scriptstyle \forall j\,\,b_j\in\{2j-2,2j-1\} \end{array}}
\hspace{-6pt} \sum_{\mathbf{a}=(a_1,\ldots,a_k)\in\noncn{k}^+}
\tilde{C}_{\pi_0, \mathbf{a}} \Phi_{(1,1+a_1,\ldots,1+a_k,b_{k+2},\ldots,b_n)}.
\end{equation}
\end{thm}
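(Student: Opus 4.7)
The plan is to prove identity \eqref{eq:submatching-b-expansion} by induction on $n\ge k+1$, with $\pi_0\in\noncn{k}$ and $\mathbf{b}$ regarded as varying; identity \eqref{eq:submatching-exansion-ecal} then follows immediately by summing \eqref{eq:submatching-b-expansion} over all admissible $\mathbf{b}$. For the base case $n=k+1$, the vector $\mathbf{b}$ is empty and $\mathcal{E}_{k+1}(\pi_0)$ is the singleton $\{\lpbracket{\pi_0}\}$: specifying the submatching $\pi_0$ on the $2k$ sites $\{2,\ldots,2k+1\}$ forces the remaining sites $1$ and $2k+2$ to be paired with each other. For $\mathbf{a}=(a_1,\ldots,a_k)\in\noncn{k}^+$ the sequence $(1,1+a_1,\ldots,1+a_k)$ is precisely the nested sequence $\lpbracket{\mathbf{a}}$, so the desired identity becomes exactly the nesting relation \eqref{eq:p-nesting-second} applied with $p=1$.

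For the inductive step, by Theorem~\ref{thm:sufficient-evaluations} it suffices to verify \eqref{eq:submatching-b-expansion} after evaluation at an arbitrary $\sigma\in\noncn{n}$. By \eqref{eq:psi-sigma-evaluations} the left-hand side collapses to the indicator $\mathbf{1}[\sigma\in\mathcal{E}_n(\pi_0,\mathbf{b})]$. On the right-hand side, I would pick a little arc $m\matched{\sigma}m+1$ of $\sigma$ with $1\le m\le 2n-1$ and apply the recurrence \eqref{eq:phi-a-recurrence} to each term $\Phi_{\mathbf{c}(\mathbf{a},\mathbf{b})}(\sigma)$, where $\mathbf{c}(\mathbf{a},\mathbf{b})=(1,1+a_1,\ldots,1+a_k,b_{k+2},\ldots,b_n)$. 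The analysis then splits according to the location of $m$. When $m\ge 2k+2$, the little arc lies in the ``$\mathbf{b}$-region''; the contraction $\mathbf{c}(\mathbf{a},\mathbf{b})\mapsto\hat{\mathbf{c}}_m$ leaves the $\mathbf{a}$-block unchanged and contracts the $\mathbf{b}$-block, so by mirroring the argument of Zinn-Justin--Di Francesco in the proof of Theorem~\ref{thm:phi-a-psi-expansion} reproduced above, both sides reduce to the statement of the theorem at $n-1$ with the same $\pi_0$ and an appropriately truncated $\mathbf{b}$. When $m\in\{2,\ldots,2k\}$, the indicator on the left vanishes unless $\pi_0$ itself contains the little arc $m-1\matched{\pi_0}m$, in which case I would invoke a secondary induction on $k$ with $\pi_0$ replaced by the contraction in $\noncn{k-1}$. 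The boundary cases $m\in\{1,2k+1\}$ require a little more care but can be handled analogously.

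The main obstacle will lie in the second case, where one must verify that $\sum_{\mathbf{a}\in\noncn{k}^+}\tilde{C}_{\pi_0,\mathbf{a}}\Phi_{\mathbf{c}(\mathbf{a},\mathbf{b})}(\sigma)$ collapses under the contraction $\sigma\mapsto\hat{\sigma}_m$ onto the corresponding expression for the reduced submatching. The compatibility ultimately hinges on the explicit product formula \eqref{eq:cmatrix-explicit} for the matrix $C_{\mathbf{a},\sigma}$ together with the defining relation $\tilde{\mathbf{C}}_n=\mathbf{C}_n^{-1}$, but tracking how the $\chi$-factors and the shift operation \eqref{eq:hat-a-def} interact with the dual coefficients $\tilde{C}_{\pi_0,\mathbf{a}}$ (for which there is no comparable explicit product formula) will require careful bookkeeping. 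If this direct reduction proves too delicate, a natural alternative would be to introduce a generalized family of identities indexed by sequences in $\mathcal{A}_k$ rather than the subset $\noncn{k}^+$, interpolating between the present setting and the $p$-nesting formalism of Subsection~\ref{sec:p-nested-matchings}, so that the reduction in the ``$\pi_0$-region'' stays inside the same family of identities throughout the induction.
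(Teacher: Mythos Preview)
Your outline is essentially the same approach as the paper's, and your ``natural alternative'' in the last sentence is in fact the crux: the paper does not treat the strengthening to $\mathcal{A}_{k+1}$ as a fallback but commits to it from the outset, replacing the specific coefficients $\tilde{C}_{\lpbracket{\pi_0},\mathbf{a}}$ by arbitrary constants $c_{\mathbf{a}}$ (for $\mathbf{a}\in\mathcal{A}_{k+1}$) subject only to $\Psi_{\lpbracket{\pi_0}}=\sum_{\mathbf{a}}c_{\mathbf{a}}\Phi_{\mathbf{a}}$, and then proves the stronger statement $\sum_{\pi\in\mathcal{E}_n(\pi_0,\mathbf{b})}\Psi_\pi=\sum_{\mathbf{a}}c_{\mathbf{a}}\Phi_{(\mathbf{a},\mathbf{b})}$ by induction.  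This abstraction is exactly what makes the contraction in the $\pi_0$-region go through: one never needs to know anything about $\tilde{C}$ explicitly, only that after applying \eqref{eq:phi-a-recurrence} the reorganized coefficients $d_{\mathbf{a}'}$ again define an expansion of $\Psi_{\lpbracket{\widehat{(\pi_0)}_m}}$ in the $\Phi$-basis.

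One point you have not anticipated: in the subcase $m\le 2k+1$ with $m\notmatched{\pi_0}m+1$ (so the left side vanishes), showing that the right side also vanishes requires a separate auxiliary fact, namely that $\sum_{\mathbf{a}'\in\mathcal{A}_k}d_{\mathbf{a}'}\Phi_{\mathbf{a}'}=0$ implies $\sum_{\mathbf{a}'\in\mathcal{A}_k}d_{\mathbf{a}'}\Phi_{(\mathbf{a}',\mathbf{b}')}=0$ for any admissible tail $\mathbf{b}'$.  The paper proves this by its own short induction, again using the ``non-interaction'' between the $\mathbf{a}$- and $\mathbf{b}$-blocks under \eqref{eq:phi-a-recurrence}.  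With this in hand your plan is complete.
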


\begin{proof}
By \eqref{eq:p-nesting-second}, the right-hand side of \eqref{eq:submatching-b-expansion} is equal to 
$$
\sum_{\mathbf{a}=(a_1,\ldots,a_{k+1})\in\noncn{k+1}^+}
\tilde{C}_{\lpbracket{\pi_0}, \mathbf{a}} \Phi_{(a_1,\ldots,a_{k+1},b_{k+2},\ldots,b_n)}.
$$
So, instead of proving \eqref{eq:submatching-b-expansion} we will show that
\begin{equation}
\label{eq:submatching-b-expansion-secondversion}
\sum_{\pi\in\mathcal{E}_n(\pi_0,\mathbf{b})}\Psi_\pi = 
\sum_{\mathbf{a}=(a_1,\ldots,a_{k+1})\in\noncn{k+1}^+}
\tilde{C}_{\lpbracket{\pi_0}, \mathbf{a}} \Phi_{(a_1,\ldots,a_{k+1},b_{k+2},\ldots,b_n)}.
\end{equation}
We will prove by induction on $n$ the following slightly stronger claim: if we have an expansion of the form
\begin{equation} \label{eq:pinested-phi-expansion}
\Psi_{\lpbracket{\pi_0}} = \sum_{\mathbf{a}\in\mathcal{A}_{k+1}} c_{\mathbf{a}} \Phi_{\mathbf{a}}
\end{equation}
for some constants $c_{\mathbf{a}}\ (\mathbf{a}\in\mathcal{A}_{k+1})$, then for any vector $\mathbf{b}=(b_{k+2},\ldots,b_n)$ satisfying $b_j\in\{2j-2,2j-1\}$ as in the theorem, the equality
\begin{equation}
\label{eq:psi-phi-two-sums}
\sum_{\pi\in\mathcal{E}_n(\pi_0,\mathbf{b})} \Psi_{\pi} =  \sum_{\mathbf{a}\in\mathcal{A}_{k+1}} c_{\mathbf{a}} \Phi_{(\mathbf{a},\mathbf{b})}
\end{equation}
holds. (Note that this claim implies \eqref{eq:submatching-b-expansion-secondversion} by taking $c_\mathbf{a}=\tilde{C}_{\lpbracket{\pi_0}, \mathbf{a}}$ for $\mathbf{a}\in\noncn{n}^+$ and $c_\mathbf{a}=0$ otherwise, but allowing the sum on the right-hand side of \eqref{eq:pinested-phi-expansion} to range over the bigger set $\mathcal{A}_{k+1}$ helps make the induction work.)
Denote the left-hand side of \eqref{eq:psi-phi-two-sums} by $X_{\pi_0,\mathbf{b}}$ and the right-hand side by $Y_{c,\mathbf{b}}$. We now prove that $X_{\pi_0,\mathbf{b}}=Y_{c,\mathbf{b}}$.

\textbf{The induction base:} here $k=0$, $n=1$, $\pi_0=\emptyset$ (the empty matching), $\mathcal{A}_{k+1}=\mathcal{A}_1=\{ 1\}$ and the expansion necessarily takes the form
$$ \Psi_{ \lpbracket{\emptyset} } = \Phi_1. $$
Also $\mathcal{E}_n(\pi_0,\mathbf{b})=\mathcal{L}((1,b_2,\ldots,b_n))$, so the claim reduces to the relation \eqref{eq:phi-a-psi-expansion} from Theorem~\ref{thm:phi-a-psi-expansion}.

\textbf{The inductive step:} First, if $n=k+1$ then the claim \eqref{eq:psi-phi-two-sums} is the same as the assumption \eqref{eq:pinested-phi-expansion}, since $\mathbf{b}$ is a trivial vector of length $0$ and $\mathcal{E}_n(\pi_0,\mathbf{b})$ is the singleton set $\{ \lpbracket{\pi_0} \}$, so there is nothing to prove. From now on assume $n>k+1$. We need to show that for all $\sigma\in\noncn{n}$, $X_{\pi_0,\mathbf{b}}(\sigma)=Y_{c,\mathbf{b}}(\sigma)$. Let $1\le m\le 2n-1$ be a number such that $m\matched{\sigma}m+1$.

\textbf{Case 1.} Assume $m\ge 2k+2$.

\textbf{Subcase 1a.} Assume $m\notin \mathbf{b}$ (that is, $m\neq b_j$ for all $j$). Then by \eqref{eq:phi-a-recurrence} we have that
$
\Phi_{a_1,\ldots,a_{k+1},b_{k+2},\ldots,b_n}(\sigma)=0
$
for any $\mathbf{a}=(a_1,\ldots,a_{k+1})\in\mathcal{A}_{k+1}$, so $Y_{c,\mathbf{b}}(\sigma)=0$.

\textbf{Subsubcase 1a(i).} Assume $m$ is odd, i.e., $m=2j-1$ for some $k+2\le j\le n$. If $\pi\in\mathcal{E}_n(\pi_0,\mathbf{b})$ then, since we assumed $m\notin \mathbf{b}$, $b_j=2j-2$ and therefore by definition $\pi_m=-1$, that is, $\pi(m)=\pi(2j-1)<m$, so in particular $(m,m+1)$ is not a little arc of $\pi$ and therefore $\Psi_\pi(\sigma)=0$. It follows that $X_{\pi_0,\mathbf{b}}(\sigma)=0=Y_{c,\mathbf{b}}(\sigma)$.

\textbf{Subsubcase 1a(ii).} Assume $m$ is even, i.e., $m=2j-2$ for some $k+2\le j\le n$. If $\pi\in\mathcal{E}_n(\pi_0,\mathbf{b})$ then, since we assumed $m\notin \mathbf{b}$, $b_j=2j-1$ and therefore by definition $\pi(m+1)=\pi(2j-1)>m+1$ and in particular $(m,m+1)$ is not a little arc of $\pi$. Again it follows that $\Psi_\pi(\sigma)=0$ and therefore $X_{\pi_0,\mathbf{b}}(\sigma)=0=Y_{c,\mathbf{b}}(\sigma)$.

\textbf{Subcase 1b.} Assume that $m\in \mathbf{b}$, i.e., $m=b_j$ for some $k+2\le j\le n$. Then by \eqref{eq:phi-a-recurrence}, for any $\mathbf{a}=(a_1,\ldots,a_{k+1})\in\mathcal{A}_{k+1}$ we have
$$ \Phi_{(a_1,\ldots,a_{k+1},b_{k+2},\ldots,b_n)}(\sigma) = 3^{n-1} 
\Phi_{(\mathbf{a},\hat{\mathbf{b}}_m)}(\hat{\sigma}_m),
$$
where $\hat{\mathbf{b}}_m$ is obtained from $\mathbf{b}$ by deleting the $m$ and shifting all the numbers to the right of $m$ down by $2$. It follows that 
\begin{equation} \label{eq:ycb-sigma-bm-hat}
Y_{c,\mathbf{b}}(\sigma) = 3^{n-1} Y_{c,\hat{\mathbf{b}}_m}(\hat{\sigma}_m).
\end{equation}
On the other hand, we have
\begin{align*}
X_{\pi_0,\mathbf{b}}(\sigma) &= \sum_{\pi\in\mathcal{E}_n(\pi_0,\mathbf{b}),\ m\notmatchedvariant{\pi}m+1}\Psi_\pi(\sigma)
+ \sum_{\pi\in\mathcal{E}_n(\pi_0,\mathbf{b}),\ m\matched{\pi}m+1}\Psi_\pi(\sigma)
\\
&= 0+
\sum_{\pi\in\mathcal{E}_n(\pi_0,\mathbf{b}),\ m\matched{\pi}m+1} 3^{n-1} \Psi_{\hat{\pi}_m}(\hat{\sigma}_m).
\end{align*}
Relabeling $\hat{\pi}_m$ in the last sum as $\pi$ transforms it into
$$
3^{n-1} 
\sum_{\pi\in\mathcal{E}_{n-1}(\pi_0,\hat{\mathbf{b}}_m)} \Psi_{\pi}(\hat{\sigma}_m) = 3^{n-1} X_{\pi_0, \hat{\mathbf{b}}_m}(\hat{\sigma}_m),
$$
so by induction we get using \eqref{eq:ycb-sigma-bm-hat} that $X_{\pi_0,\mathbf{b}}(\sigma) = Y_{\pi_0,\mathbf{b}}(\sigma)$.

\textbf{Case 2.} Assume $m\le 2k+1$.

\textbf{Subcase 2(a).} Assume $m\matched{\pi_0}m+1$ (when thinking of $\pi_0$ as ``living'' on the interval $[2,2k+1]$), and note that in this case actually $m$ must be $\le 2k$. Then for any $\pi\in\mathcal{E}_n(\pi_0,\mathbf{b})$ we have 
$$ \Psi_\pi(\sigma) = 3^{n-1} \Psi_{\hat{\pi}_m}(\hat{\sigma}_m), $$
and consequently, denoting $\mathbf{b}'=(b_{k+2}-2,\ldots,b_n-2)$, we see that
$$ X_{\pi_0,\mathbf{b}}(\sigma) = 3^{n-1} X_{\widehat{(\pi_0)}_m,\mathbf{b}'}(\hat{\sigma}_m). $$
To evaluate $Y_{c,\mathbf{b}}(\sigma)$, we use \eqref{eq:phi-a-recurrence} to get that
\begin{align}
Y_{c,\mathbf{b}}(\sigma) &= \sum_{\mathbf{a}\in\mathcal{A}_{k+1}} c_{\mathbf{a}} \Phi_{(\mathbf{a},\mathbf{b})}(\sigma)
= \sum_{\mathbf{a}\in\mathcal{A}_{k+1},\,m\in\mathbf{a}} c_{\mathbf{a}} \Phi_{(\mathbf{a},\mathbf{b})}(\sigma)
\nonumber \\ &= \sum_{\mathbf{a}\in\mathcal{A}_{k+1},\,m\in\mathbf{a}} 3^{n-1} \chi(p(\mathbf{a},m))
c_{\mathbf{a}} \Phi_{(\hat{\mathbf{a}}_m,\mathbf{b}')}(\hat{\sigma}_m)
\nonumber \\ &=
\sum_{\mathbf{a}'\in\mathcal{A}_{k}} d_{\mathbf{a}'} \Phi_{(\mathbf{a}',\mathbf{b}')}(\hat{\sigma}_m),
\label{eq:ycb-daprime-epansion}
\end{align}
where $p(\mathbf{a},m)$ denotes the number of times $m$ appears in $\mathbf{a}$, $\hat{\mathbf{a}}_m$ is as in \eqref{eq:hat-a-def}, and in the last line the sum was reorganized as a linear combination with some new coefficients $d_{\mathbf{a}'}$.
(The fact that this sum ranges over $\mathcal{A}_k$ explains why we needed to strengthen the inductive hypothesis at the beginning of the proof.)

Note also that in a similar way, replacing the vector $\mathbf{b}$ with the empty vector $\emptyset$ of length $0$, we have for any $\mu\in\noncn{k+1}$ such that $m\matched{\mu}m+1$,
\begin{align*}
Y_{c,\emptyset}(\mu) &= \sum_{\mathbf{a}\in\mathcal{A}_{k+1}} c_{\mathbf{a}} \Phi_{\mathbf{a}}(\mu)
= \sum_{\mathbf{a}\in\mathcal{A}_{k+1},\,m\in\mathbf{a}} c_{\mathbf{a}} \Phi_{\mathbf{a}}(\mu)
\\ &= \sum_{\mathbf{a}\in\mathcal{A}_{k+1},\,m\in\mathbf{a}} 3^{n-1} 
\chi(p(\mathbf{a},m))
c_{\mathbf{a}} \Phi_{\hat{\mathbf{a}}_m}(\hat{\mu}_m)
\\ &=
\sum_{\mathbf{a}'\in\mathcal{A}_{k}} d_{\mathbf{a}'} \Phi_{\mathbf{a}'}(\hat{\mu}_m),
\end{align*}
with exactly the same coefficients $d_{\mathbf{a}'}$ (the main thing to notice is that appending the $\mathbf{b}$ at the end in the subscript of $\Phi$ has no effect on the way the recurrence \eqref{eq:phi-a-recurrence} proceeds, i.e., the $\mathbf{b}$ ``doesn't interact'' with the $\mathbf{a}$). On the other hand, the case $n=k+1$ discussed at the beginning of the proof gives that $Y_{c,\emptyset}(\mu)=X_{\pi_0,\emptyset}(\mu)=3^{n-1} X_{\widehat{(\pi_0)}_m,\emptyset}(\hat{\mu}_m)$. Combining this with \eqref{eq:ycb-daprime-epansion}, we get that
$$ X_{\widehat{(\pi_0)}_m,\emptyset}(\nu) =
3^{-(n-1)} \sum_{\mathbf{a}'\in\mathcal{A}_{k}} d_{\mathbf{a}'} \Phi_{\mathbf{a}'}(\nu)
$$
for any $\nu\in\noncn{k}$, or in other words simply that
$$ X_{\widehat{(\pi_0)}_m,\emptyset} =
3^{-(n-1)} \sum_{\mathbf{a}'\in\mathcal{A}_{k}} d_{\mathbf{a}'} \Phi_{\mathbf{a}'}.
$$
Invoking the inductive hypothesis again, we conclude that
$$ 
X_{\widehat{(\pi_0)}_m,\mathbf{b}'} =
3^{-(n-1)} \sum_{\mathbf{a}'\in\mathcal{A}_{k}} d_{\mathbf{a}'} \Phi_{(\mathbf{a}',\mathbf{b}')}.
$$
In particular, 
$$ 
3^{(n-1)} X_{\widehat{(\pi_0)}_m,\mathbf{b}'}(\hat{\sigma}_m) =
\sum_{\mathbf{a}'\in\mathcal{A}_{k}} d_{\mathbf{a}'} \Phi_{(\mathbf{a}',\mathbf{b}')}(\hat{\sigma}_m).
$$
But as we showed above, the LHS is equal to $X_{\pi_0,\mathbf{b}}(\sigma)$ and the RHS is equal to $Y_{c,\mathbf{b}}(\sigma)$, so this proves the claim.

\textbf{Subcase 2(b).} Assume $m \notmatched{\pi_0} m+1$. Then for a similar reason as in subcase 2(a) above, $X_{\pi_0,\mathbf{b}}(\sigma)=0$. We need to show that also $Y_{c,\mathbf{b}}(\sigma)=0$. The computation from subcase 2(a) is still valid, so we can write as before $Y_{c,\mathbf{b}}(\sigma) = \sum_{\mathbf{a}'\in\mathcal{A}_k} d_{\mathbf{a}'} \Phi_{(\mathbf{a}',\mathbf{b}')}(\hat{\sigma}_m)$, and also
$$ Y_{c,\emptyset}(\mu)=
\sum_{\mathbf{a}'\in\mathcal{A}_k} d_{\mathbf{a}'} \Phi_{\mathbf{a}'}(\hat{\mu}_m)
$$
for any $\mu\in\noncn{k+1}$ for which $m\matched{\mu}m+1$. But we know (again from the case $n=k+1$) that $Y_{c,\emptyset}(\mu)=X_{\pi_0,\emptyset}(\mu)=0$, so we deduce that 
$$ \sum_{\mathbf{a}'\in\mathcal{A}_k} d_{\mathbf{a}'} \Phi_{\mathbf{a}'} = 0. $$
We claim that this implies also that
\begin{equation} \label{eq:no-interaction-claim} 
\sum_{\mathbf{a}'\in\mathcal{A}_k} d_{\mathbf{a}'} \Phi_{(\mathbf{a}',\mathbf{b}')} = 0,
\end{equation}
which, if true, would also give in particular that
$$ Y_{c,\mathbf{b}}(\sigma) = \sum_{\mathbf{a}'\in\mathcal{A}_k} d_{\mathbf{a}'} \Phi_{(\mathbf{a}',\mathbf{b}')}(\hat{\sigma}_m) = 0, $$
and finish the proof. 

It remains to prove \eqref{eq:no-interaction-claim}. First, for convenience relabel $\mathbf{a}'$ as $\mathbf{a}$, $\mathbf{b}'$ as $\mathbf{b}$ and $n-1$ as $n$. The claim then becomes that if $\mathbf{b}=(b_{k+1},\ldots,b_n)$ satisfies $b_j\in\{2j-2,2j-1\}$ for all $j$ and $(d_\mathbf{a})_{\mathbf{a}\in \mathcal{A}_k}$ are coefficients such that $\sum_{\mathbf{a}\in\mathcal{A}_k} d_\mathbf{a} \Phi_{\mathbf{a}}=0$ then 
$\sum_{\mathbf{a}\in\mathcal{A}_k} d_\mathbf{a} \Phi_{(\mathbf{a},\mathbf{b})}=0$.
The proof is by induction on $n$; the idea as before is that the $\mathbf{a}'$ and $\mathbf{b}'$ components ``don't interact'' when we perform recursive computations using \eqref{eq:phi-a-recurrence}, and the proof involves similar ideas to those used above. If $n=k$ there is nothing to prove, so assume that $n>k$. Let $\nu\in \noncn{n}$, and pick some $g$ such that $g\matched{\pi}g+1$. Divide into two cases, when $g\ge 2k$ or when $g\le 2k-1$. We will show that in each case we have $\sum_{\mathbf{a}\in\mathcal{A}_k} d_\mathbf{a} \Phi_{(\mathbf{a},\mathbf{b})}(\nu)=0$, which will imply the claim for the usual reason (Theorem~\ref{thm:sufficient-evaluations}).

In the first case, if $g\notin \{ b_{k+1},\ldots,b_n \}$ then $\sum_{\mathbf{a}\in\mathcal{A}_k} d_\mathbf{a} \Phi_{(\mathbf{a},\mathbf{b})}(\nu) = 0$ by \eqref{eq:phi-a-recurrence}. Alternatively, if $g\in \{ b_{k+1},\ldots,b_n \}$
then, again by \eqref{eq:phi-a-recurrence}, we have that
\begin{equation}
\label{eq:sum-b-overline}
\sum_{\mathbf{a}\in\mathcal{A}_k} d_\mathbf{a} \Phi_{(\mathbf{a},\mathbf{b})}(\nu) = 
3^{n-1} 
\sum_{\mathbf{a}\in\mathcal{A}_k} d_\mathbf{a} \Phi_{(\mathbf{a},\overline{\mathbf{b}})}(\hat{\nu}_g),
\end{equation}
where we denote $\overline{\mathbf{b}}=(b_{k+1}-2,\ldots,b_n-2)$ (note that the $\chi(p)$ factors in \eqref{eq:phi-a-recurrence} are all equal to $1$ because of the structure of $\mathbf{b}$). The right-hand side of \eqref{eq:sum-b-overline} is equal to $0$ by the inductive hypothesis (applied with the vector $\overline{\mathbf{b}}$ replacing~$\mathbf{b}$).

Finally, in the case where $g\le 2k-1$, we have
\begin{equation}
\label{eq:sum-da-phi-ab}
\sum_{\mathbf{a}\in\mathcal{A}_k} d_\mathbf{a} \Phi_{(\mathbf{a},\mathbf{b})}(\nu) 
= 
\sum_{\mathbf{a}\in\mathcal{A}_k,\ g\in \{a_1,\ldots,a_k\}} 
d_\mathbf{a} 3^{n-1} \chi(p(\mathbf{a},g)) \Phi_{(\hat{\mathbf{a}}_g,\overline{\mathbf{b}})}(\hat{\nu}_g) 
\end{equation}
(where the notation $p(\mathbf{a},g)$ was defined immediately after \eqref{eq:ycb-daprime-epansion}),
and this can be rewritten as a sum of the form
\begin{equation} \label{eq:sum-fa-prime}
\sum_{\mathbf{a}'\in\mathcal{A}_{k-1}} 
f_{\mathbf{a}'} \Phi_{(\mathbf{a}',\overline{\mathbf{b}})}(\hat{\nu}_g) 
\end{equation}
where $(f_{\mathbf{a}'})_{\mathbf{a}'\in\mathcal{A}_{k-1}}$ are some coefficients. On the other hand, by repeating the same computation in the case $n=k$ where $\mathbf{b}$ is replaced by the empty vector of length $0$, we see that for any $\mu\in\noncn{k}$ such that $g\matched{\mu}g+1$, we have
\begin{equation} \label{eq:lin-combin-fa-prime}
\sum_{\mathbf{a}\in\mathcal{A}_k} d_\mathbf{a} \Phi_{\mathbf{a}}(\mu) = \sum_{\mathbf{a}'\in\mathcal{A}_{k-1}} f_{\mathbf{a}'} \Phi_{\mathbf{a}'}(\hat{\mu}_g), 
\end{equation}
where the key observation is that the coefficients are the same numbers $f_{\mathbf{a}'}$ as in \eqref{eq:sum-fa-prime}. But we know (by the assumption of the claim we are trying to prove) that the left-hand side of \eqref{eq:lin-combin-fa-prime} is equal to $0$. Since this is true for any $\mu\in\noncn{k}$ for which $g\matched{\mu}g+1$, we conclude that 
$\sum_{\mathbf{a}'\in\mathcal{A}_{k-1}} f_{\mathbf{a}'} \Phi_{\mathbf{a}'}=0$. The inductive hypothesis now implies that 
$\sum_{\mathbf{a}'\in\mathcal{A}_{k-1}} f_{\mathbf{a}'} \Phi_{(\mathbf{a}',\overline{\mathbf{b}})}=0$. In particular, the expression in \eqref{eq:sum-fa-prime}, which is equal to the left-hand side of \eqref{eq:sum-da-phi-ab}, is $0$.
\end{proof}

\subsection{Proof of Theorem~\ref{thm:explicit-formulas-finite-n}}

\label{sec:proof-main-thm}

We are now in a position to prove Theorem~\ref{thm:explicit-formulas-finite-n}, with an explicit description of the multivariate polynomial $F_{\pi_0}(w_1,\ldots,w_k)$ whose existence is claimed in the theorem. Let $\pi_0 \in\noncn{k}$. The polynomial $F_{\pi_0}$ simply encodes the $\pi_0$th row of the matrix $\tilde{\boldsymbol{C}}_n$; more precisely, we define it as
\begin{equation}
\label{eq:submatching-polynomial-def}
F_{\pi_0}(w_1,\ldots,w_k) = \sum_{\mathbf{a}=(a_1,\ldots,a_k)\in\noncn{k}^+} \tilde{C}_{\pi_0,\mathbf{a}} \prod_{j=1}^{k} w_j^{2j-a_j}.
\end{equation}

Denote $\Omega_n(\mathbf{z}) = \prod_{1\le i<j\le n} (z_j-z_i)(1+z_j+z_i z_j)$.
Taking the $\mathbf{1}$-evaluation of both sides of \eqref{eq:submatching-exansion-ecal} and combining the resulting equation with Theorems~\ref{thm:psi-mu-related} and \ref{thm:sum-rule}, we get that
\begin{align*}
 \asm(n) \, & \prob\left(\pistarn \in \mathcal{E}_n(\pi_0)\right) = \sum_{\pi\in \mathcal{E}_n(\pi_0)} \psi_\pi
\\ &= 
\sum_{\mathbf{a}=(a_1,\ldots,a_k)\in\noncn{k}^+}
\tilde{C}_{\pi_0, \mathbf{a}} \hspace{-14.0pt}
\sum_{\begin{array}{c} \scriptstyle b_{k+2},\ldots,b_n \\[-3pt] \scriptstyle b_j\in\{2j-2,2j-1\} \end{array}}
\hspace{-14.0pt}
\phi_{(1,1+a_1,\ldots,1+a_k,b_{k+2},\ldots,b_n)}.
\end{align*}
The left-hand side is also equal to $\asm(n) \prob\left(\pi_0 \submatching \pistarn\right)$ by the rotational symmetry of $\pistarn$. To evaluate the right-hand side, represent each summand $\phi_{(1,1+a_1,\ldots,1+a_k,b_{k+2},\ldots,b_n)}$ as a constant term using \eqref{eq:phi-a-eval1-constantterm}, to get that the last expression can be written as
\begin{align*}
&
\CT_\mathbf{z} \left[ \vphantom{\sum_{\mathbf{a}=(a_1,\ldots,a_k)\in\noncn{k}^+}
\tilde{C}_{\pi_0, \mathbf{a}} 
\Omega_n(\mathbf{z}) \prod_{j=2}^{k+1} \frac{1}{z_j^{a_{j-1}}} \prod_{j=k+2}^n \left(\frac{1}{z_j^{2j-3}}+\frac{1}{z_j^{2j-2}}\right)
} \right. 
\sum_{\mathbf{a}=(a_1,\ldots,a_k)\in\noncn{k}^+}
\tilde{C}_{\pi_0, \mathbf{a}} \hspace{-14.0pt}
\sum_{\begin{array}{c} \scriptstyle b_{k+2},\ldots,b_n \\[-3pt] \scriptstyle \forall j\,\,b_j\in\{2j-2,2j-1\} \end{array}}
\Omega_n(\mathbf{z}) \prod_{j=2}^{k+1} \frac{1}{z_j^{a_{j-1}}} \prod_{j=k+2}^n \frac{1}{z_j^{b_j-1}}
\left. \vphantom{\sum_{\mathbf{a}=(a_1,\ldots,a_k)\in\noncn{k}^+}
\tilde{C}_{\pi_0, \mathbf{a}} 
\Omega_n(\mathbf{z}) \prod_{j=2}^{k+1} \frac{1}{z_j^{a_{j-1}}} \prod_{j=k+2}^n \left(\frac{1}{z_j^{2j-3}}+\frac{1}{z_j^{2j-2}}\right)
}
\right]
\\ &=
\CT_\mathbf{z} \left[
\sum_{\mathbf{a}=(a_1,\ldots,a_k)\in\noncn{k}^+}
\tilde{C}_{\pi_0, \mathbf{a}} 
\Omega_n(\mathbf{z}) \prod_{j=2}^{k+1} \frac{1}{z_j^{a_{j-1}}} \prod_{j=k+2}^n \left(\frac{1}{z_j^{2j-3}}+\frac{1}{z_j^{2j-2}}\right)
\right]
\\ &=
\CT_\mathbf{z} \left[
\sum_{\mathbf{a}=(a_1,\ldots,a_k)\in\noncn{k}^+}
\tilde{C}_{\pi_0, \mathbf{a}} 
\Omega_n(\mathbf{z}) \prod_{j=2}^{k+1} \frac{1}{z_j^{a_{j-1}}} \prod_{j=k+2}^n \frac{1+z_j}{z_j^{2j-2}}
\right]
\\ &=
\CT_\mathbf{z} \left[
\Omega_n(\mathbf{z}) \prod_{j=k+2}^n (1+z_j) \prod_{j=1}^{n} \frac{1}{z_j^{2j-2}} 
\left( \sum_{\mathbf{a}=(a_1,\ldots,a_k)\in\noncn{k}^+}
\tilde{C}_{\pi_0, \mathbf{a}} \prod_{j=2}^{k+1} z_j^{2j-2-a_{j-1}} \right)
\right]
\\
\\ &=
\CT_\mathbf{z} \left[
\Omega_n(\mathbf{z}) \prod_{j=k+2}^n (1+z_j) \prod_{j=1}^{n} \frac{1}{z_j^{2j-2}} 
\cdot F_{\pi_0}(z_2,\ldots,z_{k+1}) \right],
\end{align*}
which is clearly equal to $\asm(n)$ times the right-hand side of \eqref{eq:submatching-polycoeff-formula}.
\qed

\section*{Appendix A. Summary of algorithms}

The research described in this paper has been significantly aided by computer-aided experimentation and numerical computations performed by the author. We believe use of such methods is likely to continue to play a role in the discovery of new results extending and building on this work. In the hope of stimulating such further research, we summarize the theory presented above in the form of explicit algorithms for computing quantities of interest related to connectivity patterns of loop percolation and the theory of wheel polynomials. 

\bigskip \noindent
\textbf{Algorithm A\ \ } Computation of the probability vector $\boldsymbol{\mu}_n=(\mu_\pi)_{\pi\in\noncn{n}}$

\begin{enumerate}[labelwidth=-30pt, label=\textnormal{\textbf{Step \arabic*.}}]

\item
Compute the matrix $S_n = (s_{\pi,\sigma})_{\pi,\sigma\in\noncn{n}}$ where
$$ s_{\pi,\sigma} = \#\{ 1\le j\le 2n\,:\, e_j(\pi)=\sigma \}  $$
(with $e_j$ defined by the equation \eqref{eq:temperley-lieb-gens}, interpreted modulo $2n$).

\item 
Compute $\boldsymbol{\mu}_n$ as the solution to the vector equation 
$$\boldsymbol{\mu}_n (S_n - 2n \mathbf{I})= \boldsymbol{0}$$ 
(where $\mathbf{I}$ is the identity matrix), normalized to be a probability vector.

\end{enumerate}

\bigskip \noindent
\textbf{Algorithm B \ \ } Computation of the polynomials $Q_{\pi_0}$ (assuming Conjecture~\ref{conj:rationality-finite-n})

\begin{enumerate}[labelwidth=-30pt, label=\textnormal{\textbf{Step \arabic*.}}]

\item Use Algorithm A above to compute the submatching event probabilities $p_n = \prob\left( \pi_0 \submatching \pistarn  \right)$ for $n=k,k+1,\ldots,k(k+3)/2$.

\item Compute the numbers $q_n = p_n \prod_{j=1}^k (4n^2-j^2)^{k+1-j}$.

\item Use the Lagrange interpolation formula to compute the unique polynomial $G_{\pi_0}(m)$ of degree $\le k(k+1)/2$ satisfying
$$ G_{\pi_0}(n^2) = q_n \qquad (k\le n\le k(k+3)/2). $$

\item $Q_{\pi_0}$ is given by $Q_{\pi_0}(n)=G_{\pi_0}(n^2)$.
\end{enumerate}

\bigskip \noindent
\textbf{Algorithm C\ \ } Computation of submatching event probabilities in the half-planar model (assuming Conjecture~\ref{conj:rationality-finite-n})

\smallskip
Use Algorithm B above to compute the polynomial $Q_{\pi_0}$. Let $q_{\pi_0}^*$ be the leading coefficient of $Q_{\pi_0}$.
The probability $\prob(\pi_0 \submatching \pistar)$ is $q_{\pi_0}^*/2^{k(k+1)}$.

\vbox{
\bigskip \noindent
\textbf{Algorithm D\ \ } Computation of the matrices $\mathbf{C}_n$, $\tilde{\mathbf{C}}_n$

\smallskip
Translating \eqref{eq:cmatrix-explicit} to a notation appropriate for the computation of $C_{\pi,\sigma}$, we 
denote $p_{j,k}(\pi) = \#\{ j\le \ell<k \,:\, \pi_\ell = 1 \}-\tfrac12 (k-j-1)$. 
The matrix $\mathbf{C}_n=(C_{\pi,\sigma})_{\pi,\sigma\in\noncn{n}}$ is now computed using the explicit formula
$$
C_{\pi,\sigma} = \prod_{j<k,\ j\matched{\sigma}k} \chi(p_{j,k}(\pi))
$$
(with $\chi(\cdot)$ defined in \eqref{eq:character-mod-three}),
and $\tilde{\mathbf{C}}_n = \mathbf{C}_n^{-1}$ is its inverse matrix.
}

\bigskip \noindent
\textbf{Algorithm E\ \ } Computation of the polynomials $F_{\pi_0}$ (see Theorem~\ref{thm:explicit-formulas-finite-n})

\begin{enumerate}[labelwidth=-30pt, label=\textnormal{\textbf{Step \arabic*.}}]
\item Compute $\tilde{\mathbf{C}}_k$ using Algorithm D above.
\item Use \eqref{eq:submatching-polynomial-def} to compute $F_{\pi_0}$.
\end{enumerate}

\section*{Appendix B. Proof of Theorem~\ref{thm:hamiltonian-eigenvector}}

We start by writing a more explicit formula for the entries of the transition matrices $T_n^{(p)}$. To do so, encode each row of plaquettes as a vector $\mathbf{a}\in\{0,1\}^{2n}$, where a $0$ coordinate corresponds to a ``type $0$'' plaquette, defined as the plaquette shown on the left in Fig.~\ref{fig:plaquettes}, and a $1$ coordinate corresponds to a ``type $1$'' plaquette, which is the one on the right-hand side of the same figure. The parameter $p$ corresponds to the probability of a type $1$ plaquette. For a given plaquette-row vector $\mathbf{a}\in\{0,1\}^{2n}$, denote by $f_\mathbf{a}(\cdot)$ an operator that takes a noncrossing matching $\pi\in\noncn{n}$ and returns a new noncrossing matching $\pi'=f_\mathbf{a}(\pi)$ which is the result of composing the diagram of  $\pi$ with the row of plaquettes, in an analogous manner to the composition of matching diagrams with Temperley-Lieb operators shown in Fig.~\ref{fig:pipe-diagram}.

With this notation, it is clear from the definition of the transition matrix $T_n^{(p)}$ that its entries $(T_n^{(p)})_{\pi,\pi'}$ are given by
$$
(T_n^{(p)})_{\pi,\pi'} = \sum_{\begin{array}{c} \scriptstyle \mathbf{a}\in\{0,1\}^{2n} \\[-0.6ex] \scriptstyle f_\mathbf{a}(\pi)=\pi' \end{array}} p^{\sum_j a_j} (1-p)^{\sum_j (1-a_j)}
\qquad (\pi,\pi'\in\noncn{n}).
$$
Differentiate this equation with respect to $p$ at $p=0$. On the right-hand side the nonzero contributions will come from the vector $\mathbf{a}=\mathbf{0}=(0,\ldots,0)$ and from the vectors of the form $\mathbf{a}=\mathbf{a}_k = (0,\ldots,0,1,0,\ldots,0)$ with only one coordinate in some position $k$ equal to $1$. This gives
$$
\frac{d}{dp}_{|p=0} (T_n^{(p)})_{\pi,\pi'} = -2n \delta_{f_{\mathbf{0}}(\pi),\pi'} + \#\{ 1\le k\le 2n\,:\, f_{\mathbf{a}_k}(\pi)=\pi' \}.
$$
It is now not difficult to check (see Fig.~\ref{fig:temperley-lieb-plaquettes}) that $f_\mathbf{0}(\pi)=\rho(\pi)$, where $\rho$ is the rotation operator on matchings (defined in Theorem~\ref{thm:qkz-poly-properties}), and that the operator $f_{\mathbf{a}_k}$ bears a simple relation to the Temperley-Lieb operator $e_k$, namely, we have $f_{\mathbf{a}_k} = \rho\circ e_k = e_{k-1} \circ \rho$ (where $e_{k-1}$ is interpreted with the usual mod $2n$ convention).

Summarizing, since the vector $\boldsymbol{\mu}_n$ is the stationary probability vector for the matrices $T_n^{(p)}$, that is, it satisfies $\boldsymbol{\mu}_n T_n^{(p)} = \boldsymbol{\mu}_n$, the matrix $Q_n = \frac{d}{dp}_{|p=0} T_n^{(p)}$ satisfies $\boldsymbol{\mu}_n Q_n = 0$, and by the above observations it is easy to see that it is related to the operator $H_n$ from \eqref{eq:def-h-inf-gen} by $H_n = - R^{-1} Q_n$, where $R$ is a matrix version of the rotation operator $\rho$, whose entries are given by $ R_{\pi,\pi'} = \delta_{\rho(\pi),\pi'}$. It remains to observe that $\boldsymbol{\mu}_n$ is invariant also under the rotation action, i.e., we have $\boldsymbol{\mu}_n R = \boldsymbol{\mu}_n$, to conclude that $\boldsymbol{\mu}_n H_n = 0$, as claimed.
\qed

\begin{figure}
\begin{center}
\begin{tabular}{ccc}
\scalebox{0.7}{\includegraphics{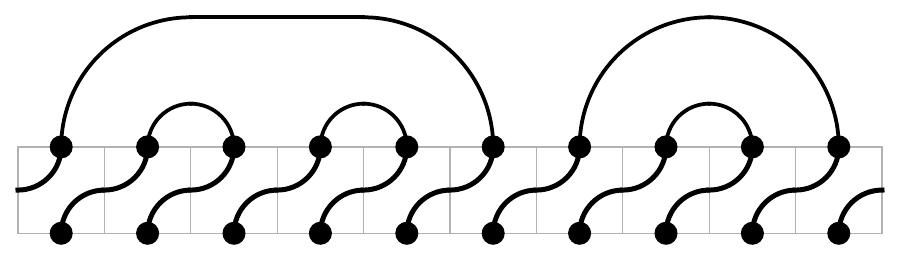}}
& &
\scalebox{0.7}{\includegraphics{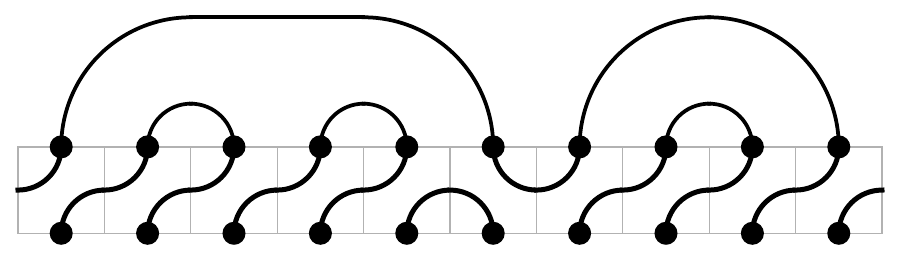}}
\\ (a)&&(b)
\end{tabular}
\caption{(a) Composing the diagram of a matching $\pi$ with a row of type $0$ plaquettes causes a simple rotation; (b) flipping the $k$th plaquette from type $0$ to type $1$ and leaving all other plaquettes as type $0$ is equivalent to the application of a Temperley-Lieb operator $e_k$ prior to the rotation.}
\label{fig:temperley-lieb-plaquettes}
\end{center}
\end{figure}

\section*{Acknowledgements}

The author thanks Omer Angel, Nathana\"el Berestycki, Jan de Gier, Christina Goldschmidt, Alexander Holroyd, Rick Kenyon, James Martin, Bernard Nienhuis, Ron Peled, David Wilson, Doron Zeilberger, and Paul Zinn-Justin for comments and helpful discussions during the preparation of the paper, and an anonymous referee for helpful suggestions.

The author was supported by the National Science Foundation under grant DMS-0955584, and by grant \#228524 from the Simons Foundation.

\bigskip \bigskip
\noindent
Dan Romik \\
Department of Mathematics \\
University of California, Davis \\
One Shields Avenue, Davis, CA 95616 \\[3pt]
Email: \texttt{romik@math.ucdavis.edu}

\end{document}